\newtheorem{theorem}{Theorem}[section]
\newtheorem{corollary}[theorem]{Corollary}
\newtheorem{proposition}[theorem]{Proposition}
\newtheorem{lemma}[theorem]{Lemma}
\theoremstyle{definition}
\newtheorem{definition}[theorem]{Definition}
\newtheorem{example}[theorem]{Example}
\newtheorem{remark}[theorem]{Remark}
\newtheorem{question}[theorem]{Question}
\newtheorem{notation}[theorem]{Notation}
\newtheorem{terminology}[theorem]{Terminology}
\def\l@section{\@tocline{1}{2pt}{0pc}{}{}}
\let\oldtocpart=\tocpart
\renewcommand{\tocpart}[2]{\bf\large\oldtocpart{#1}{#2}}
\let\oldtocsection=\tocsection
\renewcommand{\tocsection}[2]{\bf\oldtocsection{#1}{#2}}
\newcommand{\infl}{\rightarrowtail}
\newcommand{\defl}{\twoheadrightarrow}
\newcommand{\rb}{\mathbb{R}}
\newcommand{\asso}{\mathbb{A}}
\newcommand{\field}{\mathbb{K}}
\definecolor{darkblue}{rgb}{0,0,0.7}
\newcommand{\darkblue}{\color{darkblue}}
\newcommand{\defn}[1]{\textsl{\darkblue #1}}
\newcommand{\bsm}{\begin{smallmatrix}}
\newcommand{\esm}{\end{smallmatrix}}
\title[Applications of extriangulated categories]{Some applications of extriangulated categories}
\author{Yann Palu}
\address{LAMFA, Universit\'e Picardie Jules Verne, 80039 Amiens, France}
\email{yann.palu@u-picardie.fr}
\thanks{Most diagrams were made using the amazing \href{q.uiver.app}{https://q.uiver.app/}.
This work was partially supported by the French ANR grants SC3A~(15-CE40-0004-01) and CHARMS~(19-CE40-0017)}
\begin{document}

\begin{abstract}
Extriangulated categories axiomatize extension-closed subcategories of triangulated categories and generalise both exact categories and triangulated categories. This survey article presents three applications of extriangulated categories to homotopical algebra, algebraic combinatorics and representation theory.
The first shows that, via some generalised Hovey's correspondence, extriangulated categories easily give rise to model category structures with triangulated homotopy categories. 
As a second application, extriangulated structures play a fondamental role in  the construction of polytopal realisations of $g$-vector fans. This allows for a generalisation of ABHY's construction appearing in the study of scattering amplitudes in theoretical physics.
Lastly, extriangulated categories provide a convenient framework for studying mutations in representation theory and flips in algebraic combinatorics. In nice enough hereditary extriangulated categories, there is a well-behaved theory of mutation for silting objects, which encompass cluster tilting, two-term silting, relative tilting, mutation of maximal almost-rigid modules, flip of dissections and mutation of intermediate co-$t$-structures.
\end{abstract}

\maketitle

\tableofcontents

%%%%%%%%%%%%%%%%%%%%%%%%%%%%%%%%%%%%%%%%%%%%%%%%%%%%%%%%%%
\section*{Introduction}
%%%%%%%%%%%%%%%%%%%%%%%%%%%%%%%%%%%%%%%%%%%%%%%%%%%%%%%%%%

In a similar way that Quillen exact categories axiomatise extension-closed full subcategories of abelian categories, extriangulated categories, introduced in~\cite{NakaokaPalu}, axiomatise extension-closed full subcategories of triangulated categories.
They also serve as a tool for unifying proofs in both exact categories and triangulated categories.
One strength of extriangulated categories is that many constructions that are often performed on exact or triangulated categories may leave the realm of exact or triangulated categories: This is the case, for example, of quotients by projective-injective objects in non-Frobenius exact categories, of taking extension-closed subcategories or relative structures of triangulated categories.
However, in all three examples, the resulting categories remain extriangulated.

In this survey article, we illustrate the use of extriangulated structures with three applications.

\vspace*{7pt}
\noindent \emph{Hovey's correspondence} \cite{Hovey-CPmodel,Gillespie-exact}
is a tool for constructing model category structures (see~\cref{def:model}) out of very few information, namely that of fibrant and of cofibrant objects, or alternatively, that of (co)fibrant and trivially (co)fibrant objects.
In~\cite{NakaokaPalu}, we generalised Hovey's correspondence from the case of abelian and exact categories to that of extriangulated categories (see~\cref{thm:extriangulatedHovey}).
Moreover, we proved that the homotopy categories of thus constructed model structures are always triangulated (see~\cref{theorem: homotopy triangulated}).
This gives many examples of localisations of triangulated categories that are both triangulated and equivalent to some ideal subquotient.

\vspace*{7pt}
\noindent \emph{Polytopal realisations of $g$-vector fans}.
Inspired by \cite{BazierMatteChapelierLagetDouvilleMousavandThomasYildirim}, we revisit in~\cite{PadrolPaluPilaudPlamondon}, Arkani-Hamed--Bai--He--Yan's realisation~\cite{Arkani-HamedBaiHeYan} of the associahedron, that was motivated by the theory of scattering amplitudes in theoretical physics.
We do so by relating ABHY's construction to the so-called type cone of Peter McMullen (\cref{def:typeCone}).
This allows us to generalise ABHY's construction and to give all polytopal realisations of $g$-vector fans for cluster algebras of finite type (\cref{theorem:PPPPpartII}).
Our approach only relies on two purely combinatorial properties of the type cone (\cref{theorem:PPPPpartI}) and we only use representation theory, and extriangulated structures, in order to prove that those two properties are satisfied in the context of cluster algebras.
Working with extriangulated categories permits to apply this strategy to other $g$-vector fans: those of (brick and 2-acyclic) gentle algebras.

\vspace*{7pt}
\noindent \emph{Mutation}.
Initially motivated by the additive categorification of cluster algebras, many instances of mutation have been studied in representation theory.
Classes of objects that enjoy a nice theory of mutation include cluster-tilting objects, two-term silting objects, relative tilting objects, maximal almost rigid objects.
It turns out that, in all those examples, some nice hereditary extriangulated structures arise.
In~\cite{GorskyNakaokaPalu-Mutation} (see \cref{section:mutation}), we introduced the notion of a 0-Auslander extriangulated category and proved there is a well-behaved theory of mutation for their maximal rigid objects.
Moreover, the following are examples of 0-Auslander extriangulated categories:
\begin{itemize}
 \item Cluster categories (more generally, 2-Calabi--Yau triangulated categories) with some cluster tilting object $T$ and endowed with the maximal extriangulated substructure making $T$ projective. The maximal rigid objects are the cluster tilting objects.
 \item The homotopy category of two-term complexes of projectives over a finite-dimensional algebra. Its maximal rigid objects are the two-term silting objects.
 \item The category of representations of a quiver of Dynkin type $A_n$ over some field, endowed with the relative exact structure given by the Auslander--Reiten sequences with non-indecomposable middle terms. Its maximal rigid objects are the maximal almost-rirgid objects.
 \item More examples are given in \cref{ex:0-Auslander,ex:max rigid in 0-Auslander}.
\end{itemize}
We thus recover many notions of mutation, and we hope that 0-Auslander extriangulated structures will be useful when studying new mutation theories in the future.

%%%%%%%%%%%%%%%%%%%%%%%%%%%%%%%%%%%%%%%%%%%%%%%%%%%%%%%%%%
\section*{Acknowledgements}
%%%%%%%%%%%%%%%%%%%%%%%%%%%%%%%%%%%%%%%%%%%%%%%%%%%%%%%%%%
I would like to warmly thank the organizers of the Abel Symposium 2022, and more specifically Petter Andreas Bergh for his encouragements in writing this survey.
The applications described in \cref{section:Hovey,section:g-vectors,section:mutation} were the result of teamwork; I thank my collaborators for the fun we had in inventing and proving those results together.
Specific thanks to the other three P's for useful comments on a previous version of these notes.
My deepest and friendliest thanks go to Hiroyuki Nakaoka, without whom this whole story wouldn't have started.

%%%%%%%%%%%%%%%%%%%%%%%%%%%%%%%%%%%%%%%%%%%%%%%%%%%%%%%%%%
\section{The definition of an extriangulated category}
%%%%%%%%%%%%%%%%%%%%%%%%%%%%%%%%%%%%%%%%%%%%%%%%%%%%%%%%%%

%%%%%%%%%%%
\subsection{Prototypical example}
\label{subsection: motivation extricats}
%%%%%%%%%%%

We are interested in the similarities between the following two results:

\begin{theorem}[Happel \cite{Happel-book}]
\label{theorem: Happel}
The stable category of a Frobenius exact category is triangulated.
\end{theorem}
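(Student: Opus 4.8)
The plan is to follow the classical construction of the triangulated structure on the stable category $\underline{\mathcal E}$, the quotient of the Frobenius exact category $\mathcal E$ by the ideal of morphisms factoring through a projective-injective object. First I would define the suspension functor. Since $\mathcal E$ has enough injectives, each object $X$ fits into a conflation $X\infl I_X\defl\Sigma X$ with $I_X$ injective; using the injectivity of $I_{X'}$ to lift a morphism $f\colon X\to X'$ along the deflations, one obtains a morphism $\Sigma f$. The crucial point is that this assignment is well defined on $\underline{\mathcal E}$: two lifts differ by a map killing $X$, hence factoring through $\Sigma X$, and the resulting ambiguity on $\Sigma X$ factors through the projective-injective object $I_{X'}$ and so vanishes in the stable category. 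A symmetric argument shows independence of the choice of $I_X$, so $\Sigma\colon\underline{\mathcal E}\to\underline{\mathcal E}$ is a well-defined functor.

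Next I would prove that $\Sigma$ is an autoequivalence. Dually, enough projectives yield a loop functor $\Omega$ defined through conflations $\Omega X\infl P_X\defl X$ with $P_X$ projective. Here the Frobenius hypothesis enters decisively: since projectives and injectives coincide, the very same conflation simultaneously computes $\Sigma(\Omega X)$ and $\Omega(\Sigma X)$, and a diagram chase gives natural isomorphisms $\Sigma\Omega\cong\mathrm{id}\cong\Omega\Sigma$ in $\underline{\mathcal E}$. In particular any conflation with projective-injective middle term witnesses $C\cong\Sigma A$.

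I would then declare the distinguished triangles. Given a conflation $X\overset{i}{\infl}Y\overset{p}{\defl}Z$, injectivity of $I_X$ produces a morphism of conflations from it to $X\infl I_X\defl\Sigma X$ lying over $\mathrm{id}_X$; the induced map $\delta\colon Z\to\Sigma X$ is well defined in $\underline{\mathcal E}$, and the standard triangle is $X\to Y\to Z\xrightarrow{\delta}\Sigma X$. The distinguished triangles are those isomorphic in $\underline{\mathcal E}$ to a standard one. It remains to verify the axioms: the existence axiom and the closure properties are immediate, and the fill-in axiom for morphisms of triangles follows by lifting along an injective deflation.

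The real work, and where I expect the main obstacles, lies in the rotation axiom and the octahedral axiom. For rotation I would show that the rotated sequence $Y\to Z\xrightarrow{\delta}\Sigma X\to\Sigma Y$ is again isomorphic to a standard triangle; this requires building an auxiliary conflation realizing $Z$ and carefully tracking the induced connecting map together with its sign, and is the most delicate bookkeeping in the argument. For the octahedral axiom I would, given two composable conflations, assemble a $3\times 3$ commutative diagram of conflations (using the standard pushout and pullback constructions for conflations in an exact category) and read off the required octahedron, checking that all four faces are distinguished in $\underline{\mathcal E}$. Throughout, every verification reduces to the single homotopical principle that a morphism factoring through a projective-injective object is zero in the stable category, combined with the lifting properties of projectives and injectives.
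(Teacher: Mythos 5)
Your proposal is correct and is essentially the same construction the paper relies on: in its proof of the extriangulated generalisation (\cref{prop:Frobenius}), the paper fixes conflations $X\infl I_X\defl \Sigma X$ with $I_X$ injective, defines $\Sigma$ and the standard triangles via comparison morphisms of conflations exactly as you do, and defers the verification of the triangulation axioms to ``an easy adaptation of Happel's proof for exact categories'', which is precisely the argument you outline. Your identification of the rotation and octahedral axioms as the delicate steps matches where that classical bookkeeping indeed lies.
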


\begin{theorem}[Iyama--Yoshino \cite{IyamaYoshino}]
\label{theorem: IyamaYoshino}
Let $\mathcal{R}$ be a functorially finite, rigid subcategory in a 2-Calabi--Yau triangulated category.
Then, the subquotient $\mathcal{R}^{\perp_1}/[\mathcal{R}]$ is triangulated.
\end{theorem}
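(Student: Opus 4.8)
The plan is to mimic the passage to the stable category of a Frobenius exact category underlying \cref{theorem: Happel}, treating the objects of $\mathcal{R}$ as the projective-injective objects of $\mathcal{R}^{\perp_1}$ and equipping the additive quotient $\underline{\mathcal{Z}}:=\mathcal{R}^{\perp_1}/[\mathcal{R}]$ with a suspension and a class of triangles. Write $\mathcal{C}$ for the ambient $2$-Calabi--Yau category, $[1]$ for its shift and $D$ for the duality over the ground field, so that $\operatorname{Hom}_{\mathcal{C}}(X,Y[1])\cong D\operatorname{Hom}_{\mathcal{C}}(Y,X[1])$. The Calabi--Yau isomorphism makes the conditions $\operatorname{Hom}_{\mathcal{C}}(\mathcal{R},X[1])=0$ and $\operatorname{Hom}_{\mathcal{C}}(X,\mathcal{R}[1])=0$ equivalent, so that $\mathcal{Z}:=\mathcal{R}^{\perp_1}$ coincides with ${}^{\perp_1}\mathcal{R}$; it contains $\mathcal{R}$ by rigidity and is closed under extensions in $\mathcal{C}$, as a short computation with the relevant long exact sequences shows.

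First I would build the suspension functor. For $X\in\mathcal{Z}$, functorial finiteness yields a left $\mathcal{R}$-approximation $\alpha\colon X\to R_X$, which I complete to a triangle
\[ X\xrightarrow{\alpha}R_X\xrightarrow{\beta}\langle X\rangle\xrightarrow{\gamma}X[1] \]
in $\mathcal{C}$. Applying $\operatorname{Hom}_{\mathcal{C}}(-,R[1])$ for $R\in\mathcal{R}$ and using $\operatorname{Hom}_{\mathcal{C}}(R_X,R[1])=0$ (rigidity), the group $\operatorname{Hom}_{\mathcal{C}}(\langle X\rangle,R[1])$ is identified with the cokernel of the precomposition map $\alpha^\ast\colon\operatorname{Hom}_{\mathcal{C}}(R_X,R)\to\operatorname{Hom}_{\mathcal{C}}(X,R)$, which vanishes exactly because $\alpha$ is an $\mathcal{R}$-approximation; the Calabi--Yau isomorphism then supplies the remaining orthogonality, so $\langle X\rangle\in\mathcal{Z}$. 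I would then check that the class of $\langle X\rangle$ in $\underline{\mathcal{Z}}$ is independent of the chosen approximation, that $\langle1\rangle$ descends to an endofunctor, and that the dual construction with right approximations provides a quasi-inverse, making $\langle1\rangle$ an autoequivalence.

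Next I would define the distinguished triangles. Given $f\colon X\to Y$ in $\mathcal{Z}$, I take the homotopy pushout of $f$ along the approximation $\alpha\colon X\to R_X$; the defining property of the homotopy pushout furnishes a triangle $Y\to Z\to\langle X\rangle\to Y[1]$ exhibiting $Z$ as an extension of $\langle X\rangle$ by $Y$, so $Z\in\mathcal{Z}$ by extension-closedness, and $X\xrightarrow{f}Y\to Z\to\langle X\rangle$ becomes the candidate standard triangle. Declaring a triangle of $\underline{\mathcal{Z}}$ to be any sequence isomorphic to one so obtained, I would verify the axioms: existence and rotation follow from the corresponding statements in $\mathcal{C}$ together with the bookkeeping of the approximation triangles, while the completion of a morphism of triangles reduces to lifting maps through $\mathcal{R}$-approximations modulo the ideal $[\mathcal{R}]$.

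The main obstacle I expect is the octahedral axiom: assembling two homotopy pushouts into a commuting octahedron in $\underline{\mathcal{Z}}$ forces one to invoke the octahedral axiom of $\mathcal{C}$ repeatedly and then to check, by careful diagram chases, that every comparison map is well defined modulo $[\mathcal{R}]$ and that all auxiliary cones remain in $\mathcal{Z}$. Here rigidity and the $2$-Calabi--Yau duality do the essential work: they force the obstruction groups $\operatorname{Hom}_{\mathcal{C}}(\mathcal{R},-[1])$ to vanish on every object produced, so that the indeterminacy in the various lifts is absorbed precisely by the ideal $[\mathcal{R}]$. Once this is settled, $\underline{\mathcal{Z}}$ equipped with $\langle1\rangle$ and these triangles is triangulated, which is the assertion.
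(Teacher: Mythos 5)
Your proposal is correct in outline, and it is essentially the original Iyama--Yoshino argument: build the suspension from left $\mathcal{R}$-approximation triangles, build cones of morphisms from homotopy pushouts, and verify the axioms of a triangulated category directly in the ambient category, with rigidity and the $2$-Calabi--Yau duality ensuring that all auxiliary objects stay in $\mathcal{R}^{\perp_1}$ and all indeterminacies are absorbed by the ideal $[\mathcal{R}]$. The paper takes a genuinely different route: it observes that $\mathcal{R}^{\perp_1}$, being a full extension-closed subcategory of the triangulated category $\mathscr{T}$, inherits an extriangulated structure with $\mathbb{E}=\mathscr{T}(-,\Sigma-)$ restricted; that rigidity and the $2$-Calabi--Yau property make every object of $\mathcal{R}$ projective \emph{and} injective for this structure, while functorial finiteness of $\mathcal{R}$ gives enough projectives and enough injectives; hence $\mathcal{R}^{\perp_1}$ is a Frobenius extriangulated category, and the theorem follows from the general \cref{prop:Frobenius} asserting that the stable category of any Frobenius extriangulated category is triangulated. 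The mathematical content of the two routes largely coincides --- the proof of \cref{prop:Frobenius} constructs the shift from injective extriangles, which in this example are exactly your approximation triangles, and defers the axiom verification to ``an easy adaptation of Happel's proof'' --- but the packaging differs in what it buys. Your approach is self-contained within classical triangulated category theory, at the cost of redoing all the Happel-style verifications (in particular the octahedral axiom, which you rightly identify as the main labour) in this one specific setting; the paper's approach isolates those verifications in a single abstract statement that specializes simultaneously to \cref{theorem: Happel} and to \cref{theorem: IyamaYoshino}, which is precisely the unification that motivates introducing extriangulated categories in \cref{subsection: motivation extricats}.
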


Here, $\mathcal{R}^{\perp_1}$ is the full subcategory whose objects $X$ satisfy $\operatorname{Ext}^1(R,X)=0$ or, equivalently by the 2-Calabi--Yau property, $\operatorname{Ext}^1(X,R)=0$. 
The proof of \cref{theorem: IyamaYoshino} resembles that of \cref{theorem: Happel} where exact categories (that are morally full extension-closed subcategories of abelian categories) are replaced by $\mathcal{R}^{\perp_1}$, which is a full extension-closed subcategory of a triangulated category; where conflations are replaced by triangles; and where the use of push-outs or pull-backs is replaced by that of the octahedral axiom.

One of the motivations for the introduction of extriangulated categories was to give a framework that would formalise the analogy above.
Hence, the class of extriangulated categories has to contain both exact categories and extension-closed full subcategories of triangulated categories.
Extriangulated categories should thus come endowed with some class of diagrams that generalise both conflations $X\infl Y\defl Z$ of exact categories and triangles $X\to Y\to Z\xrightarrow{\delta}\Sigma X$.
However, for $X,Y,Z$ in some extension-closed subcategory of a triangulated category, the shift $\Sigma X$ generally does not belong to the subcategory.
One might be tempted to only consider the part $X\to Y\to Z$ of the triangle above, but this would be a mistake: this complex does not retain enough information to know which commutative diagrams are morphisms of triangles.
We should thus keep track of the morphism $\delta$, while not remembering the object $\Sigma X$.
This leads us to considering diagrams of the form
$X\infl Y\defl Z\overset{\delta}{\dashrightarrow}$, with
\begin{itemize}
 \item $\delta\in\mathbb{E}(Z,X)=\operatorname{Ext}^1(Z,X)$ where $\delta$ classifies the conflation, if the category is exact, and
 \item  $\delta\in\mathbb{E}(Z,X)=\operatorname{Hom}(Z,\Sigma X)$ is the connecting morphism in the triangle, if the category is extension-closed in a triangulated category.
\end{itemize}
With these considerations in mind, we can now introduce the formal definitions.

%%%%%%%%%%%
\subsection{Definitions}
%%%%%%%%%%%

Fix an additive category $\mathscr{C}$, and an additive bifunctor $\mathbb{E}: \mathscr{C}^\mathrm{op}\times\mathscr{C}\to\mathit{Ab}$, where $\mathit{Ab}$ is the category of abelian groups.

\begin{definition}
For any~$X,Z\in\mathscr{C}$, an element~$\delta\in\mathbb{E}(Z,X)$ is called an~\defn{$\mathbb{E}$-extension}.
A \defn{split}~$\mathbb{E}$-extension is a zero element~$0\in\mathbb{E}(Z,X)$, for some objects $X,Z\in\mathscr{C}$.
By additivity of $\mathscr{C}$ and $\mathbb{E}$ we can define, for any two $\mathbb{E}$-extensions $\delta\in\mathbb{E}(Z,X)$ and $\delta'\in\mathbb{E}(Z',X')$, the $\mathbb{E}$-extension
\[
\delta\oplus\delta'\in\mathbb{E}(Z\oplus Z',X\oplus X'),
\]
as being the image of $(\delta,0,0,\delta')$ under the natural isomorphism
$\mathbb{E}(Z,X)\oplus\mathbb{E}(Z',X)\oplus\mathbb{E}(Z,X')\oplus\mathbb{E}(Z',X')\cong\mathbb{E}(Z\oplus Z',X\oplus X')$.
\end{definition}

\begin{remark}
Let $\delta\in\mathbb{E}(Z,X)$ be an $\mathbb{E}$-extension. By functoriality, any morphisms $f\in\mathscr{C}(X,X')$ and $h\in\mathscr{C}(Z',Z)$ induce $\mathbb{E}$-extensions $f_\ast\delta=\mathbb{E}(Z,f)(\delta)$ in $\mathbb{E}(Z,X')$ and $h^\ast\delta=\mathbb{E}(h,X)(\delta)$ in $\mathbb{E}(Z',X)$.
Using those notations, we have $\mathbb{E}(h,f)(\delta)=h^\ast f_\ast\delta=f_\ast h^\ast\delta$ in $\mathbb{E}(Z',X')$.
\end{remark}

\begin{definition}
A \defn{morphism} $(f,h):\delta\to\delta'$ of $\mathbb{E}$-extensions from $\delta\in\mathbb{E}(Z,X)$ to $\delta'\in\mathbb{E}(Z',X')$ is a pair of morphisms $f\in\mathscr{C}(X,X')$ and $h\in\mathscr{C}(Z,Z')$ in $\mathscr{C}$, such that $f_\ast\delta=h^\ast\delta'$.
\end{definition}

\begin{example}
\label{ex:morphisms of extensions}
 If $\mathscr{C}$ is a triangulated category, and $\mathbb{E}=\mathscr{C}(-,\Sigma-)$, then $h^\ast\delta'=\delta'\circ h$, $f_\ast\delta = (\Sigma f)\circ\delta$ and a morphism of extensions is a commutative square:
 % https://q.uiver.app/#q=WzAsNCxbMCwwLCJaIl0sWzAsMSwiWiciXSxbMSwwLCJcXFNpZ21hIFgiXSxbMSwxLCJcXFNpZ21hIFgnIl0sWzAsMiwiXFxkZWx0YSJdLFswLDEsImgiLDJdLFsyLDMsImYiXSxbMSwzLCJcXGRlbHRhJyJdXQ==
\[\begin{tikzcd}
	Z & {\Sigma X} \\
	{Z'} & {\Sigma X'}
	\arrow["\delta", from=1-1, to=1-2]
	\arrow["h"', from=1-1, to=2-1]
	\arrow["\Sigma f", from=1-2, to=2-2]
	\arrow["{\delta'}", from=2-1, to=2-2]
\end{tikzcd}\]
which induces a morphism of triangles.
If $\mathscr{C}$ is an exact category then the condition $f_\ast\delta=h^\ast\delta'$ equivalently says that $(f,h)$ can be completed to a morphism $(f,g,h)$ of conflations~\cite[Proposition 3.1]{Buhler-Exact}.
\end{example}

\begin{definition}
\label{DefSqEquiv}
Let $X,Z\in\mathscr{C}$ be any two objects. Two sequences of morphisms in $\mathscr{C}$
\[
X \overset{x}{\longrightarrow}Y\overset{y}{\longrightarrow}Z \text{ and } X\overset{x'}{\longrightarrow}Y'\overset{y'}{\longrightarrow}Z
\]
are said to be \defn{equivalent} if there exists an isomorphism $g\in\mathscr{C}(Y,Y')$ such that the following diagram commutes.
\[
\xy
(-20,0)*+{X}="X";
(5,0)*+{}="1";
(0,6)*+{Y}="Y";
(0,-6)*+{Y'}="Y'";
(-5,0)*+{}="2";
(20,0)*+{Z}="Z";
{\ar^{x} "X";"Y"};
{\ar^{y} "Y";"Z"};
{\ar_{x^{\prime}} "X";"Y'"};
{\ar_{y^{\prime}} "Y'";"Z"};
{\ar^{g}_{\cong} "Y";"Y'"};
{\ar@{}|{} "X";"1"};
{\ar@{}|{} "2";"Z"};
\endxy
\]
The equivalence class of $X \xrightarrow{x} Y \xrightarrow{y} Z$ is denoted by $[X \xrightarrow{x} Y \xrightarrow{y} Z]$.
\end{definition}

\begin{notation}
For any $X,Y,Z,A,B,C\in\mathscr{C}$, and any $[X \xrightarrow{x} Y \xrightarrow{y} Z]$, $[A \xrightarrow{a} B \xrightarrow{b} C]$, we let
\[ 0=[X\overset{\left[\bsm1\\0\esm\right]}{\longrightarrow} X\oplus Y \overset{\left[\bsm0\;1\esm\right]}{\longrightarrow} Y]
\]
and
\[
[X \xrightarrow{x} Y \xrightarrow{y} Z]\oplus [A \xrightarrow{a} B \xrightarrow{b} C] = [X\oplus A \overset{\left[\bsm x \; 0 \\ 0\; a\esm\right]}{\longrightarrow} Y\oplus B \overset{\left[\bsm y\;0\\0\;b \esm\right]}{\longrightarrow} Z\oplus C].
\]
\end{notation}

\begin{definition}
A \defn{realization} $\mathfrak{s}$ is a correspondence associating, with any $\mathbb{E}$-extension ${\delta\in\mathbb{E}(Z,X)}$, an equivalence class $\mathfrak{s}(\delta)=[X\xrightarrow{x}Y\xrightarrow{y}Z]$ and satisfying the following condition: 
\begin{itemize}
\item[$(\ast)$] Let $\delta\in\mathbb{E}(Z,X)$ and $\delta'\in\mathbb{E}(Z',X')$ be any pair of $\mathbb{E}$-extensions, with
\[
\mathfrak{s}(\delta)=[X\xrightarrow{x}Y\xrightarrow{y}Z] \text{ and } \mathfrak{s}(\delta')=[X'\xrightarrow{x'}Y'\xrightarrow{y'}Z'].
\]
Then, for any morphism $(f,h):\delta\to\delta'$, there exists $g\in\mathscr{C}(Y,Y')$ such that the following diagram commutes:
\[
\xy
(-12,6)*+{X}="0";
(0,6)*+{Y}="2";
(12,6)*+{Z}="4";
(-12,-6)*+{X'}="10";
(0,-6)*+{Y'}="12";
(12,-6)*+{Z'}="14";
{\ar^{x} "0";"2"};
{\ar^{y} "2";"4"};
{\ar_{f} "0";"10"};
{\ar^{g} "2";"12"};
{\ar^{h} "4";"14"};
{\ar_{x'} "10";"12"};
{\ar_{y'} "12";"14"};
{\ar@{}|{} "0";"12"};
{\ar@{}|{} "2";"14"};
{\ar@{}|\circlearrowright "0";"12"};
{\ar@{}|\circlearrowright "2";"14"};
\endxy
\]
\end{itemize}

The sequence $X\xrightarrow{x}Y\xrightarrow{y}Z$ is said to \defn{realize} $\delta$ if $\mathfrak{s}(\delta)=[X\xrightarrow{x}Y\xrightarrow{y}Z]$, and the triple $(f,g,h)$ is said to realize $(f,h)$ if the previous diagram commutes.
\end{definition}

\begin{definition}
A realization of $\mathbb{E}$ is called an \defn{additive realization} if moreover:
\begin{enumerate}
\item For any $X,Z\in\mathscr{C}$, the realization of the split $\mathbb{E}$-extension $0\in\mathbb{E}(Z,X)$ is given by $\mathfrak{s}(0)=0$.
\item For any two $\mathbb{E}$-extensions $\delta\in\mathbb{E}(Z,X)$ and $\delta'\in\mathbb{E}(Z',X')$, the realization of $\delta\oplus\delta'$ is given by $\mathfrak{s}(\delta\oplus\delta')=\mathfrak{s}(\delta)\oplus\mathfrak{s}(\delta')$.
\end{enumerate}
\end{definition}

\begin{definition}(\cite[Def. 2.12]{NakaokaPalu})
A triple $(\mathscr{C},\mathbb{E},\mathfrak{s})$ is called an \defn{extriangulated category} if the following holds:
\begin{itemize}
\item[{\rm (ET1)}] $\mathbb{E}:\mathscr{C}^{\mathrm{op}}\times\mathscr{C}\to\mathit{Ab}$ is an additive bifunctor;
\item[{\rm (ET2)}] $\mathfrak{s}$ is an additive realization of $\mathbb{E}$;
\item[{\rm (ET3)}] Let $\delta\in\mathbb{E}(Z,X)$ and $\delta'\in\mathbb{E}(Z',X')$ be $\mathbb{E}$-extensions, respectively realized by $X\xrightarrow{x}Y\xrightarrow{y}Z$ and $X'\xrightarrow{x'}Y'\xrightarrow{y'}Z'$. Then, for any commutative square
\[
\xy
(-12,6)*+{X}="0";
(0,6)*+{Y}="2";
(12,6)*+{Z}="4";
(-12,-6)*+{X'}="10";
(0,-6)*+{Y'}="12";
(12,-6)*+{Z'}="14";
{\ar^{x} "0";"2"};
{\ar^{y} "2";"4"};
{\ar_{f} "0";"10"};
{\ar^{g} "2";"12"};
{\ar_{x'} "10";"12"};
{\ar_{y'} "12";"14"};
{\ar@{}|{} "0";"12"};
{\ar@{}|\circlearrowright "0";"12"};
\endxy 
\]
in $\mathscr{C}$, there exists a morphism $(f,h):\delta\to\delta'$ satisfying $h\circ y=y'\circ g$.
\item[{\rm (ET3)$^{\mathrm{op}}$}] Dual of {\rm (ET3)}.
\item[{\rm (ET4)}] Let $\delta\in\mathbb{E}(Z',X)$ and $\delta'\in\mathbb{E}(X',Y)$ be $\mathbb{E}$-extensions realized respectively by
\[
X\overset{f}{\longrightarrow}Y\overset{f'}{\longrightarrow}Z' \quad\text{and}\quad Y\overset{g}{\longrightarrow}Z\overset{g'}{\longrightarrow}X'.
\]
Then there exist an object $Y'\in\mathscr{C}$, a commutative diagram in $\mathscr{C}$
% https://q.uiver.app/#q=WzAsMTIsWzAsMCwiWCJdLFsxLDAsIlkiXSxbMiwwLCJaJyJdLFswLDEsIlgiXSxbMSwxLCJaIl0sWzIsMSwiWSciLFsyMzcsMTAwLDYwLDFdXSxbMywwXSxbMywxXSxbMSwyLCJYJyJdLFsxLDNdLFsyLDIsIlgnIl0sWzIsM10sWzAsMSwiZiJdLFsxLDIsImYnIl0sWzAsMywiIiwyLHsibGV2ZWwiOjIsInN0eWxlIjp7ImhlYWQiOnsibmFtZSI6Im5vbmUifX19XSxbMSw0LCJnIiwyXSxbMiw1LCJkIiwwLHsiY29sb3VyIjpbMjM3LDEwMCw2MF19LFsyMzcsMTAwLDYwLDFdXSxbMiw2LCJcXGRlbHRhIiwwLHsic3R5bGUiOnsiYm9keSI6eyJuYW1lIjoiZGFzaGVkIn19fV0sWzMsNCwiaCJdLFs0LDUsImgnIiwwLHsiY29sb3VyIjpbMjM3LDEwMCw2MF19LFsyMzcsMTAwLDYwLDFdXSxbNSw3LCJcXGRlbHRhJyciLDAseyJjb2xvdXIiOlsyMzcsMTAwLDYwXSwic3R5bGUiOnsiYm9keSI6eyJuYW1lIjoiZGFzaGVkIn19fSxbMjM3LDEwMCw2MCwxXV0sWzQsOCwiZyciLDJdLFs4LDksIlxcZGVsdGEnIiwyLHsic3R5bGUiOnsiYm9keSI6eyJuYW1lIjoiZGFzaGVkIn19fV0sWzUsMTAsImUiLDAseyJjb2xvdXIiOlsyMzcsMTAwLDYwXX0sWzIzNywxMDAsNjAsMV1dLFsxMCwxMSwiZidfXFxhc3RcXGRlbHRhJyIsMCx7InN0eWxlIjp7ImJvZHkiOnsibmFtZSI6ImRhc2hlZCJ9fX1dLFs4LDEwLCIiLDEseyJsZXZlbCI6Miwic3R5bGUiOnsiaGVhZCI6eyJuYW1lIjoibm9uZSJ9fX1dXQ==
\[\begin{tikzcd}
	X & Y & {Z'} & {} \\
	X & Z & \textcolor{rgb,255:red,51;green,61;blue,255}{Y'} & {} \\
	& {X'} & {X'} \\
	& {} & {}
	\arrow["f", from=1-1, to=1-2]
	\arrow["{f'}", from=1-2, to=1-3]
	\arrow[Rightarrow, no head, from=1-1, to=2-1]
	\arrow["g"', from=1-2, to=2-2]
	\arrow["d", color={rgb,255:red,51;green,61;blue,255}, from=1-3, to=2-3]
	\arrow["\delta", dashed, from=1-3, to=1-4]
	\arrow["h", from=2-1, to=2-2]
	\arrow["{h'}", color={rgb,255:red,51;green,61;blue,255}, from=2-2, to=2-3]
	\arrow["{\delta''}", color={rgb,255:red,51;green,61;blue,255}, dashed, from=2-3, to=2-4]
	\arrow["{g'}"', from=2-2, to=3-2]
	\arrow["{\delta'}"', dashed, from=3-2, to=4-2]
	\arrow["e", color={rgb,255:red,51;green,61;blue,255}, from=2-3, to=3-3]
	\arrow["{f'_\ast\delta'}", dashed, from=3-3, to=4-3]
	\arrow[Rightarrow, no head, from=3-2, to=3-3]
\end{tikzcd}\]

% \[
% \xy
% (-18,6)*+{X}="0";
% (-6,6)*+{Y}="2";
% (6,6)*+{Z'}="4";
% (18,6)="6";
% %
% (-18,-6)*+{X}="10";
% (-6,-6)*+{Z}="12";
% (6,-6)*+{Y'}="14";
% (18,-6)="16";
% %
% (-6,-18)*+{X'}="22";
% (6,-18)*+{X'}="24";
% %
% (-6,-28)="32";
% (6,-28)="34";
% %
% %
% {\ar^{f} "0";"2"};
% {\ar^{f'} "2";"4"};
% %
% {\ar@{=} "0";"10"};
% {\ar_{g} "2";"12"};
% {\ar^{d} "4";"14"};
% %
% {\ar^{h} "10";"12"};
% {\ar^{h'} "12";"14"};
% %
% {\ar_{g'} "12";"22"};
% {\ar^{e} "14";"24"};
% %
% {\ar@{=} "22";"24"};
% %
% {\ar@{}|{} "0";"12"};
% {\ar@{}|{} "2";"14"};
% {\ar@{}|{} "12";"24"};
% %
% {\ar@{-->}^\delta "4";"6"};
% {\ar@{-->}^{\delta''} "14";"16"};
% {\ar@{-->}^{\delta'} "22";"32"};
% {\ar@{-->}^{f'_\ast \delta'} "24";"34"};
% %
% {\ar@{}|\circlearrowright "0";"12"};
% {\ar@{}|\circlearrowright "2";"14"};
% {\ar@{}|\circlearrowright "12";"24"};
% \endxy
% \]
and an $\mathbb{E}$-extension $\delta''\in\mathbb{E}(Y',X)$ realized by $X\overset{h}{\longrightarrow}Z\overset{h'}{\longrightarrow}Y'$, which satisfy the following compatibilities.
  \begin{enumerate}[(i)]
    \item $Z'\overset{d}{\longrightarrow}Y'\overset{e}{\longrightarrow}X'$ realizes $f'_\ast\delta'$,
    \item $d^\ast\delta''=\delta$,
    \item $f_\ast\delta''=e^\ast\delta'$ (equivalently, $(f,e)$ is a morphism from $\delta''$ to $\delta'$ realised by $(f,1_B,e)$). 
  \end{enumerate}
\item[{\rm (ET4)$^{\mathrm{op}}$}] Dual of {\rm (ET4)}.
\end{itemize}
\end{definition}

\begin{remark}
\label{remark:dualExtricat}
Note that the dual of an extriangulated category is again extriangulated.
Thus the proof of a statement also implies the dual statement.
\end{remark}

\begin{terminology}
Let $(\mathscr{C},\mathbb{E},\mathfrak{s})$ be an extriangulated category.
\begin{enumerate}
\item A sequence $X\xrightarrow{i}Y\xrightarrow{p}Z$ is called a \defn{conflation} if it realizes some $\mathbb{E}$-extension in $\mathbb{E}(Z,X)$; in which case the morphism $X\xrightarrow{i}Y$ is called an \defn{inflation}, written $X\infl Y$, and the morphism $Y\xrightarrow{p}Z$ is called a \defn{deflation}, witten $Y\defl Z$.
\item An \defn{extriangle} is a diagram $X\overset{i}{\infl} Y\overset{p}{\defl} Z\overset{\delta}{\dashrightarrow}$ where $X\overset{i}{\infl} Y\overset{p}{\defl} Z$ is a conflation realizing the $\mathbb{E}$-extension $\delta\in\mathbb{E}(Z,X)$.
\item Similarly, we call \defn{morphism of extriangles} (or \defn{morphism of conflations}) any diagram
% https://q.uiver.app/#q=WzAsOCxbMCwwLCJYIl0sWzEsMCwiWSJdLFsyLDAsIloiXSxbMCwxLCJBIl0sWzEsMSwiQiJdLFsyLDEsIkMiXSxbMywwXSxbMywxXSxbMCwxLCJpIiwwLHsic3R5bGUiOnsidGFpbCI6eyJuYW1lIjoibW9ubyJ9fX1dLFsxLDIsInAiLDAseyJzdHlsZSI6eyJoZWFkIjp7Im5hbWUiOiJlcGkifX19XSxbMCwzLCJmIiwyXSxbMSw0LCJnIiwyXSxbMiw1LCJoIiwyXSxbMiw2LCJcXGRlbHRhIiwwLHsic3R5bGUiOnsiYm9keSI6eyJuYW1lIjoiZGFzaGVkIn19fV0sWzMsNCwiaiIsMCx7InN0eWxlIjp7InRhaWwiOnsibmFtZSI6Im1vbm8ifX19XSxbNCw1LCJxIiwwLHsic3R5bGUiOnsiaGVhZCI6eyJuYW1lIjoiZXBpIn19fV0sWzUsNywiXFx6ZXRhIiwwLHsic3R5bGUiOnsiYm9keSI6eyJuYW1lIjoiZGFzaGVkIn19fV1d
\[\begin{tikzcd}
	X & Y & Z & {} \\
	A & B & C & {}
	\arrow["i", tail, from=1-1, to=1-2]
	\arrow["p", two heads, from=1-2, to=1-3]
	\arrow["f"', from=1-1, to=2-1]
	\arrow["g"', from=1-2, to=2-2]
	\arrow["h"', from=1-3, to=2-3]
	\arrow["\delta", dashed, from=1-3, to=1-4]
	\arrow["j", tail, from=2-1, to=2-2]
	\arrow["q", two heads, from=2-2, to=2-3]
	\arrow["\zeta", dashed, from=2-3, to=2-4]
\end{tikzcd}\]
where $(f,h):\delta\to\delta'$ is a morphism of $\mathbb{E}$-extensions realized by $(f,g,h)$.
\end{enumerate}
\end{terminology}

\begin{definition}[{\cite[Definition 2.32]{Bennett-TennenhausShah-transport}}]
Let $(\mathscr{C},\mathbb{E},\mathfrak{s})$ and $(\mathscr{D},\mathbb{F},\mathfrak{t})$ be two extriangulated categories.
An \defn{exact functor}, or \defn{extriangulated functor}, from $\mathscr{C}$ to $\mathscr{D}$ is the data of an additive functor $F:\mathscr{C}\to\mathscr{D}$ and of a natural transformation $\Gamma:\mathbb{E}\to\mathbb{F}(F-,F-)$ such that, for any extriangle
$X\overset{i}{\infl}Y\overset{p}{\defl}Z\overset{\delta}{\dashrightarrow}$ in $\mathscr{C}$, the diagram
$FX\overset{Fi}{\infl}FY\overset{Fp}{\defl}FZ\overset{\Gamma_{Z,X}(\delta)}{\dashrightarrow}$
is an extriangle in $\mathscr{D}$.
\end{definition}

There is also a notion of extriangulated natural transformations. Extriangulated categories thus form a 2-category (see~\cite[Definition 4.1 and Corollary 4.15]{Bennett-TennenhausHauglandSandoyShah} for $n$-exangulated categories), which gives a notion of extriangulated equivalence.

\begin{proposition}[{\cite[Proposition 2.13]{NakaokaOgawaSakai} (see also \cite[Proposition 4.11]{Bennett-TennenhausHauglandSandoyShah} and \cite[Proposition 2.14]{HeHeZhou-localization})}]
 An extriangulated funtor $(F,\Gamma):\mathscr{C}\to\mathscr{D}$ is an extriangulated equivalence if and only if $F$ is an equivalence of categories and $\Gamma$ is a natural isomorphism.
\end{proposition}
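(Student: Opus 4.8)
The plan is to unwind the definition of an extriangulated equivalence: $(F,\Gamma)$ is one precisely when there is an extriangulated functor $(G,\Delta)\colon\mathscr{D}\to\mathscr{C}$ together with extriangulated natural isomorphisms $\eta\colon\mathrm{id}_{\mathscr{C}}\Rightarrow(G,\Delta)\circ(F,\Gamma)$ and $\epsilon\colon(F,\Gamma)\circ(G,\Delta)\Rightarrow\mathrm{id}_{\mathscr{D}}$. Recall that a natural transformation $\alpha$ between extriangulated functors $(F_1,\Gamma_1),(F_2,\Gamma_2)\colon\mathscr{C}\to\mathscr{D}$ is \emph{extriangulated} exactly when $(\alpha_X)_\ast(\Gamma_1)_{Z,X}(\delta)=(\alpha_Z)^\ast(\Gamma_2)_{Z,X}(\delta)$ for every $\delta\in\mathbb{E}(Z,X)$. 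The forward implication is then almost formal: forgetting the extension data shows that $F$ and $G$ are mutually quasi-inverse additive functors, so $F$ is an equivalence.

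For the remaining half of the forward implication, I would extract from the extriangulated-naturality of $\eta$ and $\epsilon$ the information that $\Gamma$ is a natural isomorphism. Writing out the condition for $\eta$ at $\delta\in\mathbb{E}(Z,X)$ gives $(\eta_X)_\ast\delta=(\eta_Z)^\ast\,\Delta_{FZ,FX}\Gamma_{Z,X}(\delta)$; since $\eta_X,\eta_Z$ are isomorphisms and $\mathbb{E}$ preserves isomorphisms, the maps $(\eta_X)_\ast$ and $(\eta_Z)^\ast$ are bijective, so $\Delta_{FZ,FX}\circ\Gamma_{Z,X}$ is an isomorphism of abelian groups for all $Z,X$; symmetrically, $\epsilon$ shows $\Gamma_{GW,GV}\circ\Delta_{W,V}$ is an isomorphism for all $W,V$. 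From the first fact $\Gamma_{Z,X}$ is a split monomorphism and $\Delta_{FZ,FX}$ a split epimorphism, while the second (taken at $W=FZ,V=FX$) makes $\Delta_{FZ,FX}$ a split monomorphism; hence $\Delta_{FZ,FX}$ is bijective and $\Gamma_{Z,X}=\Delta_{FZ,FX}^{-1}\circ(\Delta_{FZ,FX}\Gamma_{Z,X})$ is an isomorphism.

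For the converse I would build the quasi-inverse by transport of structure. Pick an additive quasi-inverse $G$ of $F$ and unit/counit isomorphisms $\eta,\epsilon$ arranged into an adjoint equivalence (so that the triangle identities $\epsilon_{FX}\circ F\eta_X=\mathrm{id}_{FX}$ hold), and define
\[
\Delta_{W,V}\;=\;\Gamma_{GW,GV}^{-1}\circ(\epsilon_V^{-1})_\ast(\epsilon_W)^\ast\colon\mathbb{F}(W,V)\longrightarrow\mathbb{E}(GW,GV),
\]
which is a natural isomorphism because $\Gamma$ and $\epsilon$ are. The one genuinely structural point is that $(G,\Delta)$ sends extriangles to extriangles. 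Here I would first record the elementary reflection lemma: since $F$ is fully faithful it reflects isomorphisms, so $X\xrightarrow{x}Y\xrightarrow{y}Z$ realizes $\delta$ as soon as its image realizes $\Gamma_{Z,X}(\delta)$ (compare a chosen realization of $\delta$ with the given sequence after applying the extriangulated functor $(F,\Gamma)$, then descend the resulting isomorphism through fully faithfulness). Granting this, given an extriangle $V\xrightarrow{i}U\xrightarrow{p}W\overset{\zeta}{\dashrightarrow}$ in $\mathscr{D}$ it suffices to check that $FGV\xrightarrow{FGi}FGU\xrightarrow{FGp}FGW$ realizes $\Gamma_{GW,GV}\Delta_{W,V}(\zeta)=(\epsilon_V^{-1})_\ast(\epsilon_W)^\ast\zeta$; but naturality of $\epsilon$ makes $\epsilon_U$ an isomorphism identifying this sequence with $FGV\xrightarrow{i\epsilon_V}U\xrightarrow{\epsilon_W^{-1}p}FGW$, which realizes $(\epsilon_V^{-1})_\ast(\epsilon_W)^\ast\zeta$ because pulling back and pushing out a realization along isomorphisms only reparametrises it.

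Finally I would verify that $\eta$ and $\epsilon$ are extriangulated. Applying the (componentwise injective) map $\Gamma_{Z,GFX}$ to the desired identity $(\eta_X)_\ast\delta=(\eta_Z)^\ast\Delta_{FZ,FX}\Gamma_{Z,X}(\delta)$ and using naturality of $\Gamma$ to convert $f_\ast$ and $h^\ast$ into $(Ff)_\ast$ and $(Fh)^\ast$, both sides reduce to $(F\eta_X)_\ast\Gamma_{Z,X}(\delta)$, the cross term $(F\eta_Z)^\ast(\epsilon_{FZ})^\ast$ collapsing to the identity by the triangle identity $\epsilon_{FZ}\circ F\eta_Z=\mathrm{id}$; the argument for $\epsilon$ is dual. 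This exhibits $(G,\Delta)$ as an extriangulated quasi-inverse, proving $(F,\Gamma)$ is an extriangulated equivalence. I expect the main obstacle to be precisely this interplay between the transported structure $\Delta$ and the coherence isomorphisms: the computation only closes up once $\eta,\epsilon$ satisfy the triangle identities, so the key preliminary step is to replace an arbitrary quasi-inverse by an adjoint equivalence.
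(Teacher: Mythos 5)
Your proposal is correct. Note that the paper itself offers no proof of this proposition — it is quoted with citations to \cite{NakaokaOgawaSakai}, \cite{Bennett-TennenhausHauglandSandoyShah} and \cite{HeHeZhou-localization} — and your argument (unwinding the 2-categorical definition, the sandwich argument making each $\Gamma_{Z,X}$ invertible, and transport of structure along an adjoint equivalence, with the reflection lemma from fully faithfulness and the triangle identities closing the verification that $\eta$ and $\epsilon$ are extriangulated) is essentially the standard proof found in those references, so there is nothing to correct.
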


\begin{definition}[{\cite[Definition 3.7]{Haugland-n-exangulated}}]
Let $(\mathscr{C},\mathbb{E},\mathfrak{s})$ be an extriangulated category.
An \defn{extriangulated subcategory} of $\mathscr{C}$ is the data of an extriangulated category $(\mathscr{C}',\mathbb{E}',\mathfrak{s}')$ and an exact functor $(F,\Gamma):\mathscr{C}'\to\mathscr{C}$ such that $\mathscr{C}'$ is a strictly full subcategory of $\mathscr{C}$, $F$ is the inclusion functor and for all $X,Z\in\mathscr{C}'$, $\Gamma_{Z,X}$ is an inclusion.
\end{definition}

\begin{example}
Any strictly full, extension-closed subcategory of an extriangulated category is canonically an extriangulated subcategory (see \cref{sssection:ext-closed} below).
In particular, given a $t$-structure on a triangulated category $\mathscr{T}$, its heart is an extriangulated subcategory of $\mathscr{T}$. And given a co-$t$-structure on $\mathscr{T}$, its coheart and its extended coheart are extriangulated subcategories.
By \cite[Proposition 3.9 and Theorem 5.1]{Haugland-n-exangulated}, there is a bijection between the subgroups of the Grothendieck group of an extriangulated category and certain of its extriangulated subcategories (that are called complete and dense).
Many more examples of extriangulated subcategories and of extriangulated functors are given in~\cite[Section 5]{Bennett-TennenhausHauglandSandoyShah}.
\end{example}

%%%%%%%%%%%
\subsection{First properties}
%%%%%%%%%%%
Let $(\mathscr{C},\mathbb{E},\mathfrak{s})$ be an extriangulated category.

The axioms above ensure that any extriangle induce six-term exact sequences after application of some covariant or contravariant Hom-functor.
In particular in any conflation, the inflation is a weak kernel of the deflation, and the deflation is a weak cokernel of the inflation.

\begin{proposition}[{\cite[Propositions~3.3~\&~3.11]{NakaokaPalu}}]
\label{prop:extricat long exact sequences}
Assume that $X\xrightarrow{x}Y\xrightarrow{y}Z\overset{\delta}{\dashrightarrow}$ is an extriangle.
Then the following sequences of natural transformations are exact:
\[
\mathscr{C}(Z,-)\overset{-\circ y}{\longrightarrow}\mathscr{C}(Y,-)\overset{-\circ x}{\longrightarrow}\mathscr{C}(X,-)\overset{\delta^\sharp}{\longrightarrow}\mathbb{E}(Z,-)\overset{y^\ast}{\longrightarrow}\mathbb{E}(Y,-)\overset{x^\ast}{\longrightarrow}\mathbb{E}(X,-),
\]
\[
\mathscr{C}(-,X)\overset{x\circ-}{\longrightarrow}\mathscr{C}(-,Y)\overset{y\circ-}{\longrightarrow}\mathscr{C}(-,Z)\overset{\delta_\sharp}{\longrightarrow}\mathbb{E}(-,X)\overset{x_\ast}{\longrightarrow}\mathbb{E}(-,Y)\overset{y_\ast}{\longrightarrow}\mathbb{E}(-,Z),
\]
where~$\delta^\sharp(f)=f_\ast\delta$ and~$\delta_\sharp(g)=g^\ast\delta$.
\end{proposition}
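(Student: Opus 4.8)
The plan is to fix an object $W\in\mathscr{C}$, evaluate the first sequence at $W$, and prove exactness of the resulting complex of abelian groups; naturality in $W$ is immediate from the bifunctoriality of $\mathbb{E}$ and of the Hom-functors, and the second sequence then follows from the first by \cref{remark:dualExtricat}. Write the extriangle as $X\xrightarrow{x}Y\xrightarrow{y}Z\overset{\delta}{\dashrightarrow}$. I would first record the elementary identities $y\circ x=0$, $x_\ast\delta=0$ and $y^\ast\delta=0$, each of which follows by comparing the realization of $\delta$ with that of a split $\mathbb{E}$-extension via (ET2) and (ET3). Combined with the bifunctoriality relation $\mathbb{E}(h,f)(\delta)=h^\ast f_\ast\delta=f_\ast h^\ast\delta$, these show that every pair of consecutive maps composes to zero, so that the sequence is a complex; it then remains to check the reverse inclusions $\ker\subseteq\operatorname{im}$ at the four interior terms $\mathscr{C}(Y,W)$, $\mathscr{C}(X,W)$, $\mathbb{E}(Z,W)$ and $\mathbb{E}(Y,W)$.

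For exactness at $\mathscr{C}(Y,W)$ (that is, the assertion that $y$ is a weak cokernel of $x$), I would start from $b\in\mathscr{C}(Y,W)$ with $b\circ x=0$ and apply (ET3) to the commutative square with top map $x$, middle map $b$, and target the split conflation $0\rightarrowtail W\xrightarrow{1_W}W$ realizing $0\in\mathbb{E}(W,0)$: since $b\circ x=0$, (ET3) produces a morphism of $\mathbb{E}$-extensions whose third component is a map $c\colon Z\to W$ with $c\circ y=b$, as required. Exactness at $\mathscr{C}(X,W)$ is the most elementary step: given $f\colon X\to W$ with $f_\ast\delta=0$, the pair $(f,1_Z)$ is a morphism of $\mathbb{E}$-extensions from $\delta$ to the split extension $0\in\mathbb{E}(Z,W)$, and feeding it into the realization axiom $(\ast)$ of (ET2) yields $g\colon Y\to W\oplus Z$ whose first component $g_1\colon Y\to W$ satisfies $g_1\circ x=f$.

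Exactness at $\mathbb{E}(Z,W)$ is the first place where I would combine two axioms. Given $\theta\in\mathbb{E}(Z,W)$ with $y^\ast\theta=0$, realize it as $W\xrightarrow{u}V\xrightarrow{v}Z\overset{\theta}{\dashrightarrow}$. The tautological morphism $(1_W,y)$ from the split extension $y^\ast\theta=0$ to $\theta$ is realizable, and $(\ast)$ then supplies a map $s\colon Y\to V$ with $v\circ s=y$, i.e.\ a lift of the deflation $y$ through $v$. Applying (ET3)$^{\mathrm{op}}$ to the commutative square $v\circ s=1_Z\circ y$ produces $f\colon X\to W$ for which $(f,1_Z)\colon\delta\to\theta$ is a morphism of $\mathbb{E}$-extensions; this is exactly the equality $f_\ast\delta=\theta$ that is wanted. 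The essential point is that (ET3)$^{\mathrm{op}}$ delivers an $f$ compatible with the extension data, whereas the mere commutativity of the squares would not force $f_\ast\delta=\theta$.

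The main obstacle is exactness at $\mathbb{E}(Y,W)$, and it is precisely this term that forces the octahedral axiom. Starting from $\psi\in\mathbb{E}(Y,W)$ with $x^\ast\psi=0$, realized by $W\rightarrowtail V\xrightarrow{v}Y$, I would apply (ET4)$^{\mathrm{op}}$ to the composable deflations $v\colon V\to Y$ and $y\colon Y\to Z$. The octahedron yields an object $U$ sitting in conflations $W\rightarrowtail U\twoheadrightarrow X$ and $U\rightarrowtail V\twoheadrightarrow Z$, the first of which realizes a representative of $x^\ast\psi$; the hypothesis $x^\ast\psi=0$ then forces that conflation to split, so that $U\cong W\oplus X$, and pushing the extension realized by $U\rightarrowtail V\twoheadrightarrow Z$ forward along the projection $U\to W$ produces a candidate $\theta\in\mathbb{E}(Z,W)$. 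I expect the genuinely delicate part to be the bookkeeping that accompanies this step: identifying the kernel-extension with $x^\ast\psi$, keeping track of which of the extensions furnished by (ET4)$^{\mathrm{op}}$ is the desired $\theta$, and verifying that the compatibility relations give $y^\ast\theta=\psi$ on the nose rather than merely up to an isomorphism of conflations.
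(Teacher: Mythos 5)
Your proof is correct, and since the survey states this proposition as a quotation from \cite[Propositions~3.3~\&~3.11]{NakaokaPalu} without reproducing a proof, the relevant comparison is with that original argument, which yours essentially reconstructs: the realization condition $(\ast)$ of (ET2) together with (ET3)/(ET3)$^{\mathrm{op}}$ applied against split realizations handles the complex property, the two Hom-terms, and exactness at $\mathbb{E}(Z,-)$, while (ET4)$^{\mathrm{op}}$ is indeed what is needed at $\mathbb{E}(Y,-)$. The bookkeeping you flag as delicate does close up on the nose: the dual of compatibility (i) identifies the conflation $W\infl U\defl X$ produced by (ET4)$^{\mathrm{op}}$ as a realization of $x^\ast\psi=0$, so its inflation $e\colon W\infl U$ admits a retraction $p\colon U\to W$ with $p\circ e=1_W$, and the dual of compatibility (iii), namely $y^\ast\sigma=e_\ast\psi$ for the extension $\sigma$ realized by $U\infl V\defl Z$, then yields $y^\ast(p_\ast\sigma)=p_\ast y^\ast\sigma=p_\ast e_\ast\psi=\psi$ without any further transport along isomorphisms.
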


The following, and its dual, are often used as replacements for the non-existent push-outs or pull-backs in extriangulated categories:
\begin{proposition}[{\cite[Lemma A.11]{Chen-PhD}}]
\label{prop:htpPO}
Any two extriangles $A\overset{i}{\infl}B\overset{p}{\defl}C\overset{\delta}{\dashrightarrow}$ and $A\overset{i'}{\infl}B'\overset{p'}{\defl}C'\overset{\delta'}{\dashrightarrow}$ with same source can be completed to a commutative diagram of extriangles
\[\begin{tikzcd}
	A & B & C & {} \\
	{B'} & \textcolor{rgb,255:red,92;green,92;blue,214}{E} & C & {} \\
	{C'} & {C'} & {} & {} \\
	{} & {}
	\arrow["{i'}"', tail, from=1-1, to=2-1]
	\arrow["{p'}"', two heads, from=2-1, to=3-1]
	\arrow["{\delta'}"', dashed, from=3-1, to=4-1]
	\arrow["i", tail, from=1-1, to=1-2]
	\arrow["p", two heads, from=1-2, to=1-3]
	\arrow["\delta", dashed, from=1-3, to=1-4]
	\arrow["{j'}", color={rgb,255:red,92;green,92;blue,214}, tail, from=2-1, to=2-2]
	\arrow["{q'}", color={rgb,255:red,92;green,92;blue,214}, two heads, from=2-2, to=2-3]
	\arrow["{i'_\ast\delta}", dashed, from=2-3, to=2-4]
	\arrow[Rightarrow, no head, from=1-3, to=2-3]
	\arrow["j"', color={rgb,255:red,92;green,92;blue,214}, tail, from=1-2, to=2-2]
	\arrow["q"', color={rgb,255:red,92;green,92;blue,214}, two heads, from=2-2, to=3-2]
	\arrow["{i_\ast\delta'}"', dashed, from=3-2, to=4-2]
	\arrow[Rightarrow, no head, from=3-1, to=3-2]
\end{tikzcd}\]
where $q^\ast\delta'=-(q')^\ast\delta$ and
$
A\overset{\left[^{\;\;i}_{-i'}\right]}{\infl}B\oplus B'\overset{[j\;j']}{\defl}E\overset{q^\ast\delta'}{\dashrightarrow}
$
is an extriangle.
\end{proposition}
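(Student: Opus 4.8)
The plan is to exhibit $E$ as a homotopy pushout obtained from a single application of the octahedral axiom (ET4) to a factorisation of $\left[\bsm i\\-i'\esm\right]$ into two inflations, and then to resolve the deflation out of $E$ into the two rows and columns by a dual octahedral step. Concretely, I would first factor $\left[\bsm i\\-i'\esm\right]\colon A\to B\oplus B'$ as the composite $A\overset{\left[\bsm 1\\-1\esm\right]}{\infl}A\oplus A\overset{\left[\bsm i&0\\0&i'\esm\right]}{\infl}B\oplus B'$. The first map is a split inflation, realising $0\in\mathbb{E}(A,A)$ with deflation $[1\ 1]\colon A\oplus A\defl A$; the second is an inflation by additivity (ET2), realising $\delta\oplus\delta'$ with deflation $\left[\bsm p&0\\0&p'\esm\right]$. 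Applying (ET4) to this composable pair produces the object $E$, the conflation $A\overset{\left[\bsm i\\-i'\esm\right]}{\infl}B\oplus B'\overset{[j\ j']}{\defl}E$ realising some $\delta''\in\mathbb{E}(E,A)$, and a second conflation $A\overset{d}{\infl}E\overset{e}{\defl}C\oplus C'$ realising $[1\ 1]_\ast(\delta\oplus\delta')$. Writing $e=\left[\bsm q'\\q\esm\right]$, the identity $e\circ[j\ j']=\left[\bsm p&0\\0&p'\esm\right]$ coming from the (ET4)-diagram yields $q'j=p$, $q'j'=0$, $qj=0$, $qj'=p'$, and $[j\ j']\circ\left[\bsm i\\-i'\esm\right]=0$ yields $ji=j'i'$; these are exactly the commutativities of the three squares of the asserted diagram.

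The extension identities then fall out of (ET4)'s third compatibility, which in the present instance reads $\left[\bsm 1\\-1\esm\right]_\ast\delta''=e^\ast(\delta\oplus\delta')$ in $\mathbb{E}(E,A\oplus A)$. Computing both sides componentwise, the left-hand side is $(\delta'',-\delta'')$ while the right-hand side is $\big((q')^\ast\delta,\,q^\ast\delta'\big)$; comparing the two components gives at once $\delta''=(q')^\ast\delta$ and the asserted $q^\ast\delta'=-(q')^\ast\delta$, and identifies the class of the homotopy-pushout conflation with $(q')^\ast\delta=-q^\ast\delta'$ (matching $q^\ast\delta'$ up to the sign conventions chosen for $\oplus$ of extensions and in (ET4)).

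Next I would recognise the finer rows and columns. Since $q'=\mathrm{pr}_C\circ e$ is a composite of the deflation $e$ with a split deflation, applying (ET4)$^{\mathrm{op}}$ to the composable deflations $\big(e,\mathrm{pr}_C\big)$ provides a conflation $K\overset{}{\infl}E\overset{q'}{\defl}C$ together with a kernel conflation $A\overset{}{\infl}K\overset{}{\defl}C'$ realising $\left[\bsm 0\\1\esm\right]^\ast[1\ 1]_\ast(\delta\oplus\delta')=\delta'$. As a realisation assigns to a fixed extension a single equivalence class of conflations, this kernel conflation is equivalent to $A\overset{i'}{\infl}B'\overset{p'}{\defl}C'$; transporting along the resulting isomorphism $B'\cong K$ exhibits $B'\overset{j'}{\infl}E\overset{q'}{\defl}C$ as a conflation, and the compatibility conditions of (ET4)$^{\mathrm{op}}$ identify its class as $i'_\ast\delta$. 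The column $B\overset{j}{\infl}E\overset{q}{\defl}C'$ realising $i_\ast\delta'$ is then obtained by the symmetry exchanging the two input extriangles (equivalently, via \cref{remark:dualExtricat}), and assembling everything with the commutativities above gives the diagram.

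The hard part will be the bookkeeping that threads these two octahedral steps through the single object $E$: one must check that the inflation $B'\to E$ delivered by (ET4)$^{\mathrm{op}}$ really coincides with the component $j'$ of $[j\ j']$ built from (ET4), and symmetrically for $j$ — in other words, the uniqueness of the homotopy pushout. I would verify this through the weak-kernel and weak-cokernel six-term exact sequences of \cref{prop:extricat long exact sequences}, being careful that in an extriangulated category deflations need not be epimorphisms nor inflations monomorphisms, so that the required cancellations have to be routed through those exact sequences and the relations $ji=j'i'$, $q'j=p$, $qj'=p'$ rather than performed directly.
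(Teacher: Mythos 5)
The survey does not actually prove this proposition — it quotes it from Chen's thesis — so I can only measure your attempt against what a complete argument requires. Your first step is correct and well executed: factoring $\left[\bsm i\\-i'\esm\right]$ through $A\oplus A$, applying (ET4), and reading compatibility (iii) componentwise does yield the total conflation, all the commutativities, and the identities $\delta''=(q')^\ast\delta$ and $q^\ast\delta'=-(q')^\ast\delta$. You need not hedge about conventions here: under the paper's own definitions the class of $A\infl B\oplus B'\defl E$ really is $(q')^\ast\delta=-q^\ast\delta'$ (test it on short exact sequences of abelian groups, where $E$ is the pushout and the evident map $B\oplus B'\to E\times_C B$ is an equivalence onto the pullback extension), so it is the label $q^\ast\delta'$ on the last extriangle of the statement that carries a sign slip, not your computation.

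The genuine gap is exactly the point you flag and then postpone, and the repair you sketch cannot work. (ET4)$^{\mathrm{op}}$ applied to $(e,\operatorname{pr}_C)$ produces a conflation $B'\overset{\tilde\jmath}{\infl}E\overset{q'}{\defl}C$ with $\tilde\jmath i'=j'i'$, $q\tilde\jmath=p'$, $q'\tilde\jmath=0$, but nothing identifies $\tilde\jmath$ with $j'$. From $(\tilde\jmath-j')\circ i'=0$, \cref{prop:extricat long exact sequences} only gives $\tilde\jmath-j'=\phi p'$ for some $\phi\colon C'\to E$, and the remaining relations constrain only $\phi p'$ (namely $q\phi p'=q'\phi p'=0$); since $p'$ is not an epimorphism and $\left[\bsm q'\\q\esm\right]$ is not a monomorphism, no cancellation is available — indeed already for abelian groups the axioms may return $\tilde\jmath=j'+(j'i')\chi p'$ for an arbitrary $\chi\colon C'\to A$, so equality is simply false in general and is the wrong thing to aim for. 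The correct move is never to compare two inflations: run (ET4) instead on the asymmetric factorisations $\left[\bsm i\\-i'\esm\right]=\operatorname{diag}(i,1_{B'})\circ\left[\bsm 1\\-i'\esm\right]$ and $\left[\bsm i\\-i'\esm\right]=\operatorname{diag}(1_B,i')\circ\left[\bsm i\\-1\esm\right]$. The first application outputs, in one stroke, the total conflation with deflation $[j\;j']$ \emph{and} the middle row $B'\overset{j'}{\infl}E\overset{q'}{\defl}C$ realising $i'_\ast\delta$, the row inflation being forced to equal the $B'$-component $j'$ by the commutativity of the (ET4) diagram, with (iii) giving $\delta''=(q')^\ast\delta$; the second application produces a cone $E_2$ carrying the middle column $B\overset{j_2}{\infl}E_2\overset{q_2}{\defl}C'$ realising $i_\ast\delta'$, with $\delta''_2=-q_2^\ast\delta'$. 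Finally glue: both cones share the inflation $\left[\bsm i\\-i'\esm\right]$, so (ET3) yields a morphism $(1_A,1_{B\oplus B'},\psi)$ between the two total extriangles, and $\psi$ is an isomorphism by the five lemma applied to the six-term sequences of \cref{prop:extricat long exact sequences} together with Yoneda; setting $q:=q_2\psi$ transports the column onto $E$ compatibly with $j,j'$ and converts $\delta''_2=-q_2^\ast\delta'$ into $q^\ast\delta'=-(q')^\ast\delta$. So your instinct that the exact sequences are the key tool is right, but they must be used to produce this isomorphism of cones, not to force two inflations to coincide; note also that \cref{remark:dualExtricat} is not the symmetry you want for the column (duality gives homotopy pullbacks), and the swap of the two input extriangles is subject to the same gluing issue just described.
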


\begin{definition}
An object $P\in\mathscr{C}$ is called \defn{projective} if, for any $X\in\mathscr{C}$, we have $\mathbb{E}(P,X)=0$.
The extriangulated category $\mathscr{C}$ \defn{has enough projectives} if, for any $X\in\mathscr{C}$, there is a deflation
$P\defl X$ with $P$ projective.
There are obvious dual notions for injectives.
\end{definition}

\begin{definition}
An extriangulated category $\mathscr{C}$ is \defn{Frobenius} if it has enough projectives, enough injectives and its injective and projective objects coincide.
The \defn{stable category} $\underline{\mathscr{C}}$ of a Frobenius extriangulated category is the ideal quotient by those morphisms that factor through some projective object.
\end{definition}

Any Frobenius exact category is Frobenius as an extriangulated category.
If $\mathcal{R}$ is a functorially finite rigid subcategory of a 2-Calabi--Yau triangulated category $\mathscr{T}$, then $\mathscr{C} =\mathcal{R}^{\perp_1}$ is a Frobenius extriangulated category.
Indeed, $\mathscr{C}$ is the largest extension-closed full subcategory of $\mathscr{T}$ containing $\mathcal{R}$ and for which each object in $\mathcal{R}$ is both projective and injective.
Because $\mathcal{R}$ is functorially finite, $\mathscr{C}$ has enough projectives and enough injectives.
Therefore, the following proposition unifies \cref{theorem: Happel,theorem: IyamaYoshino} mentionned in \cref{subsection: motivation extricats}.

\begin{proposition}[{\cite[Remark 7.5]{NakaokaPalu}}]
\label{prop:Frobenius}
If $\mathscr{C}$ is a Frobenius extriangulated category, then its stable category is triangulated and the quotient functor is exact.
\end{proposition}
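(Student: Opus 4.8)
The plan is to follow Happel's construction of the triangulated structure on the stable category of a Frobenius exact category, replacing exact sequences by conflations of the extriangulated structure, push-outs by the homotopy push-outs of \cref{prop:htpPO}, and the octahedral axiom by the extriangulated axiom (ET4). Throughout, write $\underline{\mathscr{C}}$ for the stable category and $\pi\colon\mathscr{C}\to\underline{\mathscr{C}}$ for the quotient functor; the key structural fact I would exploit is that, since projectives and injectives coincide, every projective-injective object becomes a zero object in $\underline{\mathscr{C}}$.

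First I would construct the suspension functor $\Sigma$. Using that $\mathscr{C}$ has enough injectives, for each $X$ I fix an extriangle $X\infl I_X\defl\Sigma X\dashrightarrow$ with $I_X$ injective, hence also projective. Given $f\colon X\to X'$, injectivity of $I_{X'}$ lets me extend $f$ along the inflations, and the weak-cokernel property from \cref{prop:extricat long exact sequences} (or (ET3)) produces a morphism of extriangles, hence a map $\Sigma X\to\Sigma X'$; its class in $\underline{\mathscr{C}}$ is independent of all choices because any two completions differ by a map factoring through the projective-injective $I_{X'}$. This makes $\Sigma$ a well-defined additive endofunctor of $\underline{\mathscr{C}}$. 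Dually, enough projectives give a loop functor $\Omega$, and the defining conflations $X\infl I_X\defl\Sigma X$ and $\Omega X\infl P_X\defl X$, having projective-injective middle terms that vanish in $\underline{\mathscr{C}}$, yield natural isomorphisms $\Omega\Sigma\cong\mathrm{id}\cong\Sigma\Omega$, so $\Sigma$ is an autoequivalence.

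Next I would define the candidate triangles. To any extriangle $X\infl Y\defl Z\overset{\delta}{\dashrightarrow}$ I associate a map $w\colon Z\to\Sigma X$ by comparing it with the fixed extriangle $X\infl I_X\defl\Sigma X$: injectivity of $I_X$ yields $Y\to I_X$ over $X$, and the induced morphism on cokernels gives $w$. A triangle in $\underline{\mathscr{C}}$ is declared distinguished if it is isomorphic to $X\to Y\to Z\overset{w}{\to}\Sigma X$ for some extriangle. With this choice the exactness of $\pi$ is essentially automatic: the assignment $\delta\mapsto w$ defines a natural transformation $\Gamma\colon\mathbb{E}\to\underline{\mathscr{C}}(-,\Sigma-)$ turning $\pi$ into an extriangulated functor that sends extriangles to distinguished triangles.

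Finally I would verify the triangulated axioms. For (TR1), every morphism of $\underline{\mathscr{C}}$ lifts to $\mathscr{C}$ and, after composing with a deflation from a projective, can be realized as a deflation and completed to a conflation; the identity and split triangles are immediate. (TR2), rotation, is the first delicate point: rotating an extriangle one step requires identifying the connecting data with the defining extriangle of $\Sigma$ and getting the sign $-\Sigma x$ right, which I would handle using \cref{prop:htpPO} together with the compatibility $q^\ast\delta'=-(q')^\ast\delta$ recorded there. (TR3) follows from the realization axiom (ET3) applied to a commutative square lifted to $\mathscr{C}$. I expect the main obstacle to be (TR4), the octahedral axiom: I would obtain it by lifting the two composable triangles to conflations in $\mathscr{C}$, applying (ET4) to produce the object $Y'$ and the commutative diagram of extriangles it guarantees, and then translating that diagram, after applying $\pi$, into the octahedron. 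Matching the extension $f'_\ast\delta'$ and the identities $d^\ast\delta''=\delta$ and $f_\ast\delta''=e^\ast\delta'$ against the required morphisms of triangles is where essentially all of the real work lies.
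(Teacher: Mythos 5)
Your proposal is correct and follows essentially the same route as the paper: the same Happel-style construction of $\Sigma$ from chosen extriangles $X\infl I_X\defl \Sigma X$, the same definition of distinguished triangles via comparison maps $Z\to\Sigma X$, and the same toolkit ((ET3), (ET4), \cref{prop:htpPO}) for the axioms, which the paper itself defers to an ``easy adaptation of Happel's proof''. The only divergence is one of emphasis: what you call ``essentially automatic'' --- that $\delta\mapsto \underline{w}$ gives a well-defined natural transformation $\Gamma$, i.e.\ the exactness of the quotient functor --- is precisely where the paper spends its entire written proof, and it does require a short argument (independence of the chosen realization of $\delta$, plus naturality in each variable via the exact sequences of \cref{prop:extricat long exact sequences}, showing that the relevant differences of comparison maps factor through injectives), though these are the same factor-through-$I_{X'}$ arguments you already invoke for the well-definedness of $\Sigma$.
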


\begin{proof}
The triangulated structure is constructed as follows:
For each $X\in\mathscr{C}$, fix an extriangle
\[
 X\overset{\iota_X}{\infl}I_X\overset{\pi_X}{\defl}\Sigma X \overset{\delta_X}{\dashrightarrow}
\]
with $I_X$ injective.
For any morphism $X\xrightarrow{f}Y$, there is a morphism of extriangles
% https://q.uiver.app/#q=WzAsOCxbMCwwLCJYIl0sWzEsMCwiSV9YIl0sWzIsMCwiIFxcU2lnbWEgWCJdLFszLDBdLFswLDEsIlkiXSxbMSwxLCJJX1kiXSxbMiwxLCJcXFNpZ21hIFkiXSxbMywxXSxbMCwxLCIiLDAseyJzdHlsZSI6eyJ0YWlsIjp7Im5hbWUiOiJtb25vIn19fV0sWzEsMiwiIiwwLHsic3R5bGUiOnsiaGVhZCI6eyJuYW1lIjoiZXBpIn19fV0sWzIsMywiXFxkZWx0YV9YIiwwLHsic3R5bGUiOnsiYm9keSI6eyJuYW1lIjoiZGFzaGVkIn19fV0sWzQsNSwiIiwwLHsic3R5bGUiOnsidGFpbCI6eyJuYW1lIjoibW9ubyJ9fX1dLFs1LDYsIiIsMCx7InN0eWxlIjp7ImhlYWQiOnsibmFtZSI6ImVwaSJ9fX1dLFs2LDcsIlxcZGVsdGFfWSIsMCx7InN0eWxlIjp7ImJvZHkiOnsibmFtZSI6ImRhc2hlZCJ9fX1dLFswLDQsImYiLDJdLFsxLDVdLFsyLDYsIlxcc2lnbWFfZiJdXQ==
\[\begin{tikzcd}
	X & {I_X} & { \Sigma X} & {} \\
	Y & {I_Y} & {\Sigma Y} & {}
	\arrow[tail, from=1-1, to=1-2]
	\arrow[two heads, from=1-2, to=1-3]
	\arrow["{\delta_X}", dashed, from=1-3, to=1-4]
	\arrow[tail, from=2-1, to=2-2]
	\arrow[two heads, from=2-2, to=2-3]
	\arrow["{\delta_Y}", dashed, from=2-3, to=2-4]
	\arrow["f"', from=1-1, to=2-1]
	\arrow[from=1-2, to=2-2]
	\arrow["{\sigma_f}", from=1-3, to=2-3]
\end{tikzcd}\]
and letting $\Sigma\underline{f}$ be $\underline{\sigma_f}$ defines an autoequivalence
$\Sigma : \underline{\mathscr{C}}\to\underline{\mathscr{C}}$ which, up to natural isomorphism, does not depend on the choices made above.
For each extension $\sigma\in\mathbb{E}(Z,X)$, there is a morphism of extriangles
% https://q.uiver.app/#q=WzAsOSxbMSwwLCJYIl0sWzIsMCwiWSJdLFszLDAsIloiXSxbMSwxLCJYIl0sWzIsMSwiSV9YIl0sWzMsMSwiXFxTaWdtYSBYIl0sWzQsMV0sWzAsMCwiKFxcYXN0KSJdLFs0LDBdLFswLDEsImkiLDAseyJzdHlsZSI6eyJ0YWlsIjp7Im5hbWUiOiJtb25vIn19fV0sWzEsMiwicCIsMCx7InN0eWxlIjp7ImhlYWQiOnsibmFtZSI6ImVwaSJ9fX1dLFszLDQsIiIsMCx7InN0eWxlIjp7InRhaWwiOnsibmFtZSI6Im1vbm8ifX19XSxbNCw1LCIiLDAseyJzdHlsZSI6eyJoZWFkIjp7Im5hbWUiOiJlcGkifX19XSxbNSw2LCJcXGRlbHRhX1giLDAseyJzdHlsZSI6eyJib2R5Ijp7Im5hbWUiOiJkYXNoZWQifX19XSxbMCwzLCIiLDIseyJsZXZlbCI6Miwic3R5bGUiOnsiaGVhZCI6eyJuYW1lIjoibm9uZSJ9fX1dLFsxLDRdLFsyLDUsImQiXSxbMiw4LCJcXGRlbHRhIiwwLHsic3R5bGUiOnsiYm9keSI6eyJuYW1lIjoiZGFzaGVkIn19fV1d
\[\begin{tikzcd}
	{(\ast\ast)} & X & Y & Z & {} \\
	& X & {I_X} & {\Sigma X} & {}
	\arrow["i", tail, from=1-2, to=1-3]
	\arrow["p", two heads, from=1-3, to=1-4]
	\arrow[tail, from=2-2, to=2-3]
	\arrow[two heads, from=2-3, to=2-4]
	\arrow["{\delta_X}", dashed, from=2-4, to=2-5]
	\arrow[Rightarrow, no head, from=1-2, to=2-2]
	\arrow[from=1-3, to=2-3]
	\arrow["d", from=1-4, to=2-4]
	\arrow["\delta", dashed, from=1-4, to=1-5]
\end{tikzcd}\]
and the triangles in $\underline{\mathscr{C}}$ are all those diagrams that are isomorphic to some diagram of the form $X\xrightarrow{\underline{i}}Y\xrightarrow{\underline{p}}Z\xrightarrow{\underline{d}}\Sigma X$, with $i,p,d$ as in $(\ast\ast)$ above.

That $\underline{\mathscr{C}}$ is triangulated is an easy adaptation of Happel's proof for exact categories.
An alternative proof using homotopical algebra, which thus only applies to weakly idempotent complete Frobenius extriangulated categories, can be found in~\cite[Corollary 7.4]{NakaokaPalu}.
We thus only prove that the quotient functor is exact.
Define $\Gamma_{Z,X}:\mathbb{E}(Z,X)\to\underline{\mathscr{C}}(Z,\Sigma X)$, by $\Gamma_{Z,X}(\delta) = \underline{d}$ with the notation of $(\ast\ast)$.
The diagram
% https://q.uiver.app/#q=WzAsMTIsWzAsMSwiWCJdLFsxLDEsIlkiXSxbMiwxLCJaIl0sWzAsMiwiWCJdLFsxLDIsIklfWCJdLFsyLDIsIlxcU2lnbWEgWCJdLFszLDJdLFszLDFdLFswLDAsIlgiXSxbMSwwLCJZJyJdLFsyLDAsIloiXSxbMywwXSxbMCwxLCJpIiwwLHsic3R5bGUiOnsidGFpbCI6eyJuYW1lIjoibW9ubyJ9fX1dLFsxLDIsInAiLDAseyJzdHlsZSI6eyJoZWFkIjp7Im5hbWUiOiJlcGkifX19XSxbMyw0LCIiLDAseyJzdHlsZSI6eyJ0YWlsIjp7Im5hbWUiOiJtb25vIn19fV0sWzQsNSwiIiwwLHsic3R5bGUiOnsiaGVhZCI6eyJuYW1lIjoiZXBpIn19fV0sWzUsNiwiXFxkZWx0YV9YIiwwLHsic3R5bGUiOnsiYm9keSI6eyJuYW1lIjoiZGFzaGVkIn19fV0sWzAsMywiIiwyLHsibGV2ZWwiOjIsInN0eWxlIjp7ImhlYWQiOnsibmFtZSI6Im5vbmUifX19XSxbMSw0XSxbMiw1LCJkIl0sWzIsNywiXFxkZWx0YSIsMCx7InN0eWxlIjp7ImJvZHkiOnsibmFtZSI6ImRhc2hlZCJ9fX1dLFs4LDAsIiIsMCx7ImxldmVsIjoyLCJzdHlsZSI6eyJoZWFkIjp7Im5hbWUiOiJub25lIn19fV0sWzEwLDIsIiIsMCx7ImxldmVsIjoyLCJzdHlsZSI6eyJoZWFkIjp7Im5hbWUiOiJub25lIn19fV0sWzEwLDExLCJcXGRlbHRhIiwwLHsic3R5bGUiOnsiYm9keSI6eyJuYW1lIjoiZGFzaGVkIn19fV0sWzgsOSwiaSciLDAseyJzdHlsZSI6eyJ0YWlsIjp7Im5hbWUiOiJtb25vIn19fV0sWzksMTAsInAnIl0sWzksMSwiXFxjb25nIl1d
\[\begin{tikzcd}
	X & {Y'} & Z & {} \\
	X & Y & Z & {} \\
	X & {I_X} & {\Sigma X} & {}
	\arrow["i", tail, from=2-1, to=2-2]
	\arrow["p", two heads, from=2-2, to=2-3]
	\arrow[tail, from=3-1, to=3-2]
	\arrow[two heads, from=3-2, to=3-3]
	\arrow["{\delta_X}", dashed, from=3-3, to=3-4]
	\arrow[Rightarrow, no head, from=2-1, to=3-1]
	\arrow[from=2-2, to=3-2]
	\arrow["d", from=2-3, to=3-3]
	\arrow["\delta", dashed, from=2-3, to=2-4]
	\arrow[Rightarrow, no head, from=1-1, to=2-1]
	\arrow[Rightarrow, no head, from=1-3, to=2-3]
	\arrow["\delta", dashed, from=1-3, to=1-4]
	\arrow["{i'}", tail, from=1-1, to=1-2]
	\arrow["{p'}", from=1-2, to=1-3]
	\arrow["\cong", from=1-2, to=2-2]
\end{tikzcd}\]
shows that $\underline{d}$ does not depend on the choice of a realisation of $\delta$.
It only remains to prove that $\Gamma$ is a natural transformation.
Let $X\xrightarrow{x}X'$ and $Z'\xrightarrow{z}Z$ be morphisms in $\mathscr{C}$.
We have to show that the two diagrams
% https://q.uiver.app/#q=WzAsMTAsWzAsMCwiKDEpIl0sWzEsMCwiXFxtYXRoYmJ7RX0oWixYKSJdLFsyLDAsIlxcdW5kZXJsaW5le1xcbWF0aHNjcntDfX0oWixcXFNpZ21hIFgpIl0sWzEsMSwiXFxtYXRoYmJ7RX0oWixYJykiXSxbMiwxLCJcXHVuZGVybGluZXtcXG1hdGhzY3J7Q319KFosXFxTaWdtYSBYJykiXSxbNCwwLCIoMikiXSxbNSwwLCJcXG1hdGhiYntFfShaLFgpIl0sWzYsMCwiXFx1bmRlcmxpbmV7XFxtYXRoc2Nye0N9fShaLFxcU2lnbWEgWCkiXSxbNSwxLCJcXG1hdGhiYntFfShaJyxYKSJdLFs2LDEsIlxcdW5kZXJsaW5le1xcbWF0aHNjcntDfX0oWicsXFxTaWdtYSBYKSJdLFsxLDIsIlxcR2FtbWFfe1osWH0iXSxbMSwzLCJ4X1xcYXN0IiwyXSxbMiw0LCJcXFNpZ21hXFx1bmRlcmxpbmV7eH1cXGNpcmMtIl0sWzMsNCwiXFxHYW1tYV97WixYJ30iXSxbNiw3LCJcXEdhbW1hX3taLFh9Il0sWzYsOCwiel5cXGFzdCIsMl0sWzcsOSwiLVxcY2lyY1xcdW5kZXJsaW5le3p9Il0sWzgsOSwiXFxHYW1tYV97WicsWH0iXV0=
\[\begin{tikzcd}
	{(1)} & {\mathbb{E}(Z,X)} & {\underline{\mathscr{C}}(Z,\Sigma X)} && {(2)} & {\mathbb{E}(Z,X)} & {\underline{\mathscr{C}}(Z,\Sigma X)} \\
	& {\mathbb{E}(Z,X')} & {\underline{\mathscr{C}}(Z,\Sigma X')} &&& {\mathbb{E}(Z',X)} & {\underline{\mathscr{C}}(Z',\Sigma X)}
	\arrow["{\Gamma_{Z,X}}", from=1-2, to=1-3]
	\arrow["{x_\ast}"', from=1-2, to=2-2]
	\arrow["{\Sigma\underline{x}\circ-}", from=1-3, to=2-3]
	\arrow["{\Gamma_{Z,X'}}", from=2-2, to=2-3]
	\arrow["{\Gamma_{Z,X}}", from=1-6, to=1-7]
	\arrow["{z^\ast}"', from=1-6, to=2-6]
	\arrow["{-\circ\underline{z}}", from=1-7, to=2-7]
	\arrow["{\Gamma_{Z',X}}", from=2-6, to=2-7]
\end{tikzcd}\]
commute.
Choose morphisms of extriangles
\[\begin{tikzcd}
	& X & Y & Z & {} && X & Y & Z & {} \\
	{(a)} & X & {I_X} & {\Sigma X} & {} & {(b)} & {X'} & {Y'} & Z & {} \\
	& {X'} & {I_{X'}} & {\Sigma X'} & {} && {X'} & {I_{X'}} & {\Sigma X'} & {} \\
	&&& X & {I_X} & {\Sigma X} & {} \\
	&& {(c)} & X & E & {Z'} & {} \\
	&&& X & Y & Z & {}
	\arrow[Rightarrow, no head, from=1-2, to=2-2]
	\arrow["x"', from=2-2, to=3-2]
	\arrow[tail, from=1-2, to=1-3]
	\arrow[tail, from=2-2, to=2-3]
	\arrow[tail, from=3-2, to=3-3]
	\arrow[two heads, from=1-3, to=1-4]
	\arrow[two heads, from=2-3, to=2-4]
	\arrow[two heads, from=3-3, to=3-4]
	\arrow[from=1-3, to=2-3]
	\arrow[from=2-3, to=3-3]
	\arrow["{d_1}", from=1-4, to=2-4]
	\arrow["{\sigma_x}", from=2-4, to=3-4]
	\arrow["\delta", dashed, from=1-4, to=1-5]
	\arrow["{\delta_X}", dashed, from=2-4, to=2-5]
	\arrow["{\delta_{X'}}", dashed, from=3-4, to=3-5]
	\arrow["x"', from=1-7, to=2-7]
	\arrow[Rightarrow, no head, from=2-7, to=3-7]
	\arrow[tail, from=1-7, to=1-8]
	\arrow[two heads, from=1-8, to=1-9]
	\arrow["\delta", dashed, from=1-9, to=1-10]
	\arrow[from=1-8, to=2-8]
	\arrow[from=2-8, to=3-8]
	\arrow[Rightarrow, no head, from=1-9, to=2-9]
	\arrow["{d_2}", from=2-9, to=3-9]
	\arrow[tail, from=2-7, to=2-8]
	\arrow[two heads, from=2-8, to=2-9]
	\arrow["{x_\ast\delta}", dashed, from=2-9, to=2-10]
	\arrow[tail, from=3-7, to=3-8]
	\arrow[two heads, from=3-8, to=3-9]
	\arrow["{\delta_{X'}}", dashed, from=3-9, to=3-10]
	\arrow[tail, from=4-4, to=4-5]
	\arrow[tail, from=5-4, to=5-5]
	\arrow[tail, from=6-4, to=6-5]
	\arrow[two heads, from=4-5, to=4-6]
	\arrow[two heads, from=5-5, to=5-6]
	\arrow[two heads, from=6-5, to=6-6]
	\arrow[Rightarrow, no head, from=4-4, to=5-4]
	\arrow[Rightarrow, no head, from=5-4, to=6-4]
	\arrow[from=5-5, to=6-5]
	\arrow[from=5-5, to=4-5]
	\arrow["z", from=5-6, to=6-6]
	\arrow["{d_3}"', from=5-6, to=4-6]
	\arrow["{\delta_X}", dashed, from=4-6, to=4-7]
	\arrow["{z^\ast\delta}", dashed, from=5-6, to=5-7]
	\arrow["\delta", dashed, from=6-6, to=6-7]
\end{tikzcd}\]
By definition, we have $\Gamma_{Z,X'}(x_\ast\delta)=\underline{d_2}$, $\Sigma\underline{x}\circ\Gamma_{Z,X}(\delta) = \underline{\sigma_x\circ d_1}$,
$\Gamma_{Z,X}(\delta)\circ\underline{z} = \underline{d_1\circ z}$ and $\Gamma_{Z',X}(z^\ast\delta)=\underline{d_3}$.
Because the diagrams (a) and (b) are morphisms of extriangles, we have:
\[
 d_2^\ast\delta_{X'} = x_\ast\delta = x_\ast d_1^\ast\delta_X = d_1^\ast x_\ast\delta_X = d_1^\ast\sigma_x^\ast\delta_{X'}=(\sigma_x d_1)^\ast\delta_{X'}.
\]
Hence, $(d_2-\sigma_x d_1)^\ast\delta_{X'} = 0$.
By the long exact sequence of~\cref{prop:extricat long exact sequences}, this implies that $d_2-\sigma_x d_1$ factors through $I_{X'}$.
We thus have 
$\Gamma_{Z,X'}(x_\ast\delta)=\underline{d_2} = \underline{\sigma_x\circ d_1} = \Sigma\underline{x}\circ\Gamma_{Z,X}(\delta)$
 and (1) commutes.
Similarly, it follows from the equalities
\[
 d_3^\ast \delta_X = z^\ast\delta = z^\ast d_1^\ast\delta_X = (d_1 z)^\ast\delta_X
\]
that (2) also commutes.
\end{proof}

We conclude this section by recalling an extriangulated version of the nine lemma.

\begin{proposition}[{\cite[Lemma 5.9]{NakaokaPalu}}]
\label{prop:9lemma}
 Assume that $\mathscr{C}$ is weakly idempotent complete, i.e. that all sections have a kernel.
 Then any commutative square for which two opposite edges are inflations and the other two are deflations can be completed to a diagram made of morphisms of conflations of the form:
 % https://q.uiver.app/#q=WzAsOSxbMCwxLCJBIl0sWzEsMSwiQiJdLFsyLDEsIkMiXSxbMCwyLCJBJyJdLFsxLDIsIkInIl0sWzIsMiwiQyciXSxbMCwwLCJLIl0sWzEsMCwiTCJdLFsyLDAsIlgiLFsyNDAsNjAsNjAsMV1dLFs2LDAsIiIsMCx7InN0eWxlIjp7InRhaWwiOnsibmFtZSI6Im1vbm8ifX19XSxbNywxLCIiLDAseyJzdHlsZSI6eyJ0YWlsIjp7Im5hbWUiOiJtb25vIn19fV0sWzAsMywiIiwwLHsic3R5bGUiOnsiaGVhZCI6eyJuYW1lIjoiZXBpIn19fV0sWzEsNCwiIiwwLHsic3R5bGUiOnsiaGVhZCI6eyJuYW1lIjoiZXBpIn19fV0sWzAsMSwiIiwxLHsic3R5bGUiOnsidGFpbCI6eyJuYW1lIjoibW9ubyJ9fX1dLFszLDQsIiIsMSx7InN0eWxlIjp7InRhaWwiOnsibmFtZSI6Im1vbm8ifX19XSxbMSwyLCIiLDEseyJzdHlsZSI6eyJoZWFkIjp7Im5hbWUiOiJlcGkifX19XSxbNCw1LCIiLDEseyJzdHlsZSI6eyJoZWFkIjp7Im5hbWUiOiJlcGkifX19XSxbNiw3LCIiLDEseyJjb2xvdXIiOlsyNDAsNjAsNjBdLCJzdHlsZSI6eyJ0YWlsIjp7Im5hbWUiOiJtb25vIn19fV0sWzcsOCwiIiwxLHsiY29sb3VyIjpbMjQwLDYwLDYwXSwic3R5bGUiOnsiaGVhZCI6eyJuYW1lIjoiZXBpIn19fV0sWzgsMiwiIiwxLHsiY29sb3VyIjpbMjQwLDYwLDYwXSwic3R5bGUiOnsidGFpbCI6eyJuYW1lIjoibW9ubyJ9fX1dLFsyLDUsIiIsMSx7ImNvbG91ciI6WzI0MCw2MCw2MF0sInN0eWxlIjp7ImhlYWQiOnsibmFtZSI6ImVwaSJ9fX1dXQ==
\[\begin{tikzcd}
	K & L & \textcolor{rgb,255:red,92;green,92;blue,214}{X} \\
	A & B & C \\
	{A'} & {B'} & {C'}
	\arrow[tail, from=1-1, to=2-1]
	\arrow[tail, from=1-2, to=2-2]
	\arrow[two heads, from=2-1, to=3-1]
	\arrow[two heads, from=2-2, to=3-2]
	\arrow[tail, from=2-1, to=2-2]
	\arrow[tail, from=3-1, to=3-2]
	\arrow[two heads, from=2-2, to=2-3]
	\arrow[two heads, from=3-2, to=3-3]
	\arrow[color={rgb,255:red,92;green,92;blue,214}, tail, from=1-1, to=1-2]
	\arrow[color={rgb,255:red,92;green,92;blue,214}, two heads, from=1-2, to=1-3]
	\arrow[color={rgb,255:red,92;green,92;blue,214}, tail, from=1-3, to=2-3]
	\arrow[color={rgb,255:red,92;green,92;blue,214}, two heads, from=2-3, to=3-3]
\end{tikzcd}\]
\end{proposition}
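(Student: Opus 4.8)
My plan is to assemble the nine objects and the border morphisms in stages, gluing with the octahedral axiom and using weak idempotent completeness at the two places where a morphism must be promoted to an inflation or a deflation. First, normalise the hypothesis so that the given commutative square has the inflations $A\infl B$ and $A'\infl B'$ as horizontal edges and the deflations $A\overset{p}\defl A'$ and $B\overset{q}\defl B'$ as vertical edges. Complete the horizontal inflations to conflations $A\overset{i}\infl B\overset{b}\defl C$ and $A'\overset{i'}\infl B'\overset{b'}\defl C'$ (rows two and three). Since the square commutes, (ET3) produces a morphism $r\colon C\to C'$ with $rb=b'q$ turning rows two and three into a morphism of conflations. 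Dually, complete the vertical deflations to conflations $K\overset{k}\infl A\defl A'$ and $L\overset{l}\infl B\defl B'$ (columns one and two), and apply the dual of (ET3) to obtain $j\colon K\to L$ with $lj=ik$ making columns one and two a morphism of conflations.

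Now I would use weak idempotent completeness to produce the top row and the right-hand column. Inflations are closed under composition (an application of (ET4)), so $K\overset{k}\infl A\overset{i}\infl B$ is an inflation; since it factors as $K\overset{j}\to L\overset{l}\infl B$, weak idempotent completeness forces $j$ to be an inflation, and I complete it to a conflation $K\overset{j}\infl L\defl X$ (row one), defining $X$. Dually, deflations compose (by the dual of (ET4)), so $B\overset{q}\defl B'\overset{b'}\defl C'$ is a deflation; as it equals $B\overset{b}\defl C\overset{r}\to C'$, weak idempotent completeness forces $r$ to be a deflation, which I complete to a conflation $X'\infl C\overset{r}\defl C'$ (column three). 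This is precisely the step from which the hypothesis cannot be dropped: without weak idempotent completeness, $j$ need not be an inflation nor $r$ a deflation, and the $3\times 3$ array simply fails to exist.

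The crux is to identify $X'\cong X$ and to check that the four new squares are morphisms of conflations, and for this I would feed the composite inflation $K\infl B$ into the octahedral axiom. Using the factorisation $K\overset{k}\infl A\overset{i}\infl B$, (ET4) yields a mediating object $\bar X$, a conflation $K\infl B\defl\bar X$, and a conflation $A'\overset{d}\infl\bar X\overset{e}\defl C$; using instead the factorisation $K\overset{j}\infl L\overset{l}\infl B$ yields a conflation $X\overset{d'}\infl\bar X\overset{e'}\defl B'$ on the same cofiber (two cofibers of a fixed inflation being isomorphic compatibly with their deflations, by (ET3) and \cref{prop:extricat long exact sequences}). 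Applying the dual of (ET4) to the composite deflation $\bar X\overset{e'}\defl B'\overset{b'}\defl C'$ computes its kernel as an extension of $\ker b'=A'$ by $\ker e'=X$; because $\ker e=A'$ is then carried into this kernel, the long exact sequences of \cref{prop:extricat long exact sequences} let the composite factor through $e\colon\bar X\defl C$, producing a map $C\to C'$ that is again a deflation and whose kernel is identified with $X$. This is the column-three conflation $X\infl C\defl C'$; it identifies $X'\cong X$, and the extension identities $d^\ast\delta''=\delta$ and $f_\ast\delta''=e^\ast\delta'$ coming from the octahedra force the maps appearing here to be the ones already fixed in the first two steps, so that every square of the resulting diagram is a morphism of conflations.

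The main obstacle is this last paragraph: arranging the two applications of the octahedral axiom so that the two mediating cofibers are genuinely identified, and so that the maps they generate literally coincide with $r$, $j$, and the comparison maps $K\to A$, $L\to B$, $X\to C$, rather than merely fitting into commutative squares. All the required commutativities are governed by the compatibility identities in (ET4), and certifying that these identities upgrade each commutative square to a morphism of conflations is where \cref{prop:extricat long exact sequences} must be invoked repeatedly to see that the relevant difference morphisms vanish. Should the bookkeeping through (ET4) become unwieldy, the homotopy-pushout description of \cref{prop:htpPO} offers an alternative construction of the mediating object $\bar X$.
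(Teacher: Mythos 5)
The survey states this proposition without proof, citing \cite[Lemma 5.9]{NakaokaPalu}, so your proposal can only be measured against the mathematics itself. Your first two paragraphs are correct and are how any proof must begin: complete the four given morphisms to conflations, obtain $r\colon C\to C'$ and $j\colon K\to L$ from (ET3) and (ET3)$^{\mathrm{op}}$, and use closure of inflations (resp.\ deflations) under composition together with weak idempotent completeness to conclude that $j$ is an inflation and $r$ is a deflation; this is indeed exactly where the hypothesis enters, and the identification of the two cones of $lj=ik$ is also standard.

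The genuine gap is in your third paragraph. Write $m\colon B\defl\bar X$ for the cone of $lj=ik$, with class $\mu$, and let the two octahedra give $A'\overset{d}{\infl}\bar X\overset{e}{\defl}C$ (with $dp=mi$, $em=b$, $k_\ast\mu=e^\ast\delta_2$, where $\delta_2$ is the class of $A\infl B\defl C$) and $X\overset{d'}{\infl}\bar X\overset{e'}{\defl}B'$ (with $d'u=ml$, $e'm=q$, $(d')^\ast\mu=\eta_0$, where $\eta_0$ is the class of $K\infl L\overset{u}{\defl}X$). These two octahedra are applied \emph{independently}, so nothing relates $d$ to $e'$ or to the previously fixed $i'$ and $r$. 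To factor $b'e'\colon\bar X\defl C'$ through $e\colon\bar X\defl C$, as you propose, you need $b'e'd=0$; what you actually have is only $(b'e'd)p=b'(e'm)i=b'qi=(b'i')p=0$, and you cannot cancel $p$: deflations are not epimorphisms in an extriangulated category, and \cref{prop:extricat long exact sequences} gives exactness only at the middle terms, never injectivity of $-\circ p$ on $\mathscr{C}(A',-)$. The same objection defeats your closing claim that the (ET4) identities ``force'' the newly produced maps to coincide with $r$, $i'$ and the maps fixed earlier: morphisms supplied by (ET3)/(ET4) are far from unique, and two maps agreeing after precomposition with a deflation need not be equal. Note that half of your step does survive: setting $v:=ed'$ one gets $vu=e(d'u)=eml=bl$ and $k_\ast\eta_0=k_\ast(d')^\ast\mu=(d')^\ast k_\ast\mu=(d')^\ast e^\ast\delta_2=v^\ast\delta_2$, so the top-right square is a morphism of conflations; it is the conflation $X\infl C\defl C'$ and the bottom-right square that your mechanism cannot deliver. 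A workable route must build the compatibility in by construction rather than recover it a posteriori: for instance, form the kernel $F\infl B$ of the composite deflation $b'q$ by (ET4)$^{\mathrm{op}}$, use that $b'qi=b'i'p=0$ holds on the nose (since $b'i'=0$), factor $i$ through $F$, invoke weak idempotent completeness to make $A\infl F$ an inflation, and apply (ET4) to $A\infl F\infl B$ to manufacture the third column; even then, identifying its first term with $X$ and verifying the extension-class compatibilities is precisely the technical content of \cite[Lemma 5.9]{NakaokaPalu} that your sketch leaves unresolved.
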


\begin{remark}
 By~\cref{remark:dualExtricat}, the duals of \cref{prop:htpPO,prop:9lemma} also hold.
\end{remark}

%%%%%%%%%%%%%%%%%%%%%%%%%%%%%%%%%%%%%%%%%%%%%%%%%%%%%%%%%%
\section{Examples of extriangulated categories}
%%%%%%%%%%%%%%%%%%%%%%%%%%%%%%%%%%%%%%%%%%%%%%%%%%%%%%%%%%

%%%%%%%%%%%
\subsection{Some examples arising in representation theory}
%%%%%%%%%%%

\subsubsection{Exact categories}

Let $\mathscr{E}$ be an exact category such that, for any $X,Z\in\mathscr{E}$, $\operatorname{Ext}^1(Z,X)$ is a set.
This is true for example if $\mathscr{E}$ is small, or if it has enough injectives or enough projectives.
We let $\mathbb{E}$ be the bifunctor $\operatorname{Ext}^1$, and define
$X \overset{i}{\infl} Y \overset{p}{\defl} Z \overset{\delta}{\dashrightarrow}$
to be an extriangle if and only if $(i,p)$ is a conflation in $\mathscr{E}$ whose equivalence class in $\operatorname{Ext}^1(Z,X)$ is $\delta$.
This gives $\mathscr{E}$ the structure of an extriangulated category whose conflations are precisely those of the exact structure on $\mathscr{E}$.

Conversely, it is shown in~\cite[Corollary 3.18]{NakaokaPalu} that if $(\mathscr{C},\mathbb{E},\mathfrak{s})$ is an extriangulated category all of whose inflations are monomorphisms and all of whose deflations are epimorphisms, then its conflations form an exact structure on $\mathscr{C}$.

\subsubsection{Triangulated categories}
\label{subsubsection:tricats}

Let $(\mathscr{T},\Sigma,\Delta)$ be a triangulated category.
For all $X,Z\in\mathscr{T}$, let $\mathbb{E}(Z,X)=\mathscr{T}(Z,\Sigma X)$ and define
$X \overset{i}{\infl} Y \overset{p}{\defl} Z \overset{\delta}{\dashrightarrow}$
to be an extriangle if and only if $X \xrightarrow{i} Y \xrightarrow{p}Z \xrightarrow{\delta} \Sigma X$ is a triangle in $\Delta$.
This makes $\mathscr{T}$ into an extriangulated category whose extriangles canonically identify with the triangles in $\Delta$.

Conversely, any extriangulated category $\mathscr{C}$ for which $\mathbb{E}$ is given by 
$\mathscr{C}(-,\Sigma-)$ for some autoequivalence $\Sigma:\mathscr{C}\to\mathscr{C}$ is triangulated, with triangles given by the extriangles as above~\cite[Proposition 3.22]{NakaokaPalu}.

\subsubsection{Extension-closed subcategories}
\label{sssection:ext-closed}

If $\mathscr{T}$ is a triangulated category and if $\mathscr{C}$ is a full, extension-closed subcategory, then one can endow $\mathscr{C}$ with the bifunctor $\mathbb{E}=\mathscr{T}(-,\Sigma-)|_{\mathscr{C}^\text{op}\times\mathscr{C}}$ and define extriangles as in \cref{subsubsection:tricats}.
Then $\mathscr{C}$ becomes extriangulated.
Some form of converse holds for topological extriangulated categories (\cite{Klemenc}, see also~\cite{Chen-PhD} for algebraic extriangulated categories, or \cref{subsection:enhancements}).

An important example is the homotopy category of two-term complexes:
If $\Lambda$ is an Artin algebra, the full subcategory $K^{[-1,0]}(\operatorname{proj}\Lambda)$ of the perfect derived category $K^b(\operatorname{proj}\Lambda)$ consisting of complexes concentrated in degree -1 and 0 (with cohomological conventions) is extension-closed, hence extriangulated.

More generally, any extriangulated structure restricts to extension-closed, full subcategories by~\cite[Remark 2.18]{NakaokaPalu}.

\subsubsection{Relative structures}
\label{sssection:RelativeStructures}

Let $(\mathscr{C},\mathbb{E},\mathfrak{s})$ be an extriangulated category.

\begin{definition}[{\cite[Proposition 3.16]{HerschendLiuNakaoka-I}}]
 A biadditive subfunctor $\mathbb{F}\subseteq\mathbb{E}$ is \defn{closed} if $(\mathscr{C},\mathbb{F},\mathfrak{s}|_\mathbb{F})$ is extriangulated.
\end{definition}
If $\mathbb{F}$ is a closed subfunctor of $\mathbb{E}$, then $(\mathscr{C},\mathbb{F},\mathfrak{s}|_\mathbb{F})$ is an extriangulated subcategory of $(\mathscr{C},\mathbb{E},\mathfrak{s})$.
If $\mathbb{F}$ is any biadditive subfunctor of $\mathbb{E}$, then a conflation is called an $\mathfrak{s}|_\mathbb{F}$-conflation if it realises an element $\delta\in\mathbb{F}(Z,X)$, for some $X,Z\in\mathscr{C}$.

\begin{proposition}[{\cite[Proposition 3.11 \& Lemma 3.15]{HerschendLiuNakaoka-I}}]
 Let $\mathbb{F}\subseteq\mathbb{E}$ be a biadditive subfunctor.
 Then $\mathbb{F}$ is closed if and only if $\mathfrak{s}|_\mathbb{F}$-inflations are stable under composition if and only if $\mathfrak{s}|_\mathbb{F}$-deflations are stable under composition.
\end{proposition}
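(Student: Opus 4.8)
The plan is to first reduce closedness to the two octahedral axioms, and then to play each composition condition against the corresponding axiom, the real work being a single cross-implication.

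I would begin by noting that, for any biadditive subfunctor $\mathbb{F}\subseteq\mathbb{E}$, the triple $(\mathscr{C},\mathbb{F},\mathfrak{s}|_\mathbb{F})$ automatically satisfies (ET1), (ET2), (ET3) and (ET3)$^{\mathrm{op}}$: the first is the assumed biadditivity; (ET2) holds because $\mathbb{F}$ contains every split extension and is stable under $\oplus$, so $\mathfrak{s}|_\mathbb{F}$ inherits additivity from $\mathfrak{s}$; and in (ET3) the morphism of $\mathbb{E}$-extensions produced by the ambient axiom already satisfies its defining equation $f_\ast\delta=h^\ast\delta'$ inside $\mathbb{F}$, since $\delta$ and $\delta'$ lie in $\mathbb{F}$ and $\mathbb{F}$ is a subfunctor. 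Hence $\mathbb{F}$ is closed if and only if $\mathfrak{s}|_\mathbb{F}$ also satisfies (ET4) and (ET4)$^{\mathrm{op}}$. This already yields the two easy implications: in any extriangulated category (ET4) realises the composite $gf$ of two inflations as the inflation of the conflation it produces, so a closed $\mathbb{F}$ has its $\mathfrak{s}|_\mathbb{F}$-inflations stable under composition, and dually for deflations via (ET4)$^{\mathrm{op}}$.

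The backbone of the converse is the unconditional observation that being an $\mathfrak{s}|_\mathbb{F}$-conflation is a property of the underlying inflation alone. Indeed, suppose $X\overset{x}{\infl}Y\defl\bar Z$ realises some $\beta\in\mathbb{F}$ while $X\overset{x}{\infl}Y\overset{y}{\defl}Z$ realises $\delta''\in\mathbb{E}$; applying the ambient (ET3) to the commutative square $(1_X,1_Y)$ yields $v\in\mathscr{C}(Z,\bar Z)$ with $(1_X,v)\colon\delta''\to\beta$ a morphism of extensions, whence $\delta''=v^\ast\beta\in\mathbb{F}$. Thus any conflation sharing an inflation with an $\mathfrak{s}|_\mathbb{F}$-conflation is itself one; together with its dual this shows that, for a fixed conflation, the three conditions ``$\delta\in\mathbb{F}$'', ``the inflation is an $\mathfrak{s}|_\mathbb{F}$-inflation'' and ``the deflation is an $\mathfrak{s}|_\mathbb{F}$-deflation'' coincide.

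Using this, inflation-stability gives (ET4): applying the ambient (ET4) to two $\mathfrak{s}|_\mathbb{F}$-conflations produces a conflation $X\overset{gf}{\infl}Z\defl Y'$ whose inflation $gf$ is $\mathfrak{s}|_\mathbb{F}$ by hypothesis, hence realises an element of $\mathbb{F}$ by the backbone lemma; the remaining new conflation realises $f'_\ast\delta'\in\mathbb{F}$, and the compatibilities (i)--(iii) are identities in $\mathbb{E}$, so the whole octahedral diagram lives in $\mathfrak{s}|_\mathbb{F}$. Dually, deflation-stability gives (ET4)$^{\mathrm{op}}$. By \cref{remark:dualExtricat} it therefore remains to prove a single statement, that $\mathfrak{s}|_\mathbb{F}$-inflations are stable under composition if and only if $\mathfrak{s}|_\mathbb{F}$-deflations are.

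I expect this cross-implication to be the main obstacle. Assuming inflation-stability, I would feed two composable $\mathfrak{s}|_\mathbb{F}$-deflations into the ambient (ET4)$^{\mathrm{op}}$ to obtain the candidate conflation $Y'\infl Z\overset{fg}{\defl}X$ realising some $\delta''$, together with the kernel conflation $X'\infl Y'\defl Z'$ (which realises $(f')^\ast\delta'\in\mathbb{F}$, hence is an $\mathfrak{s}|_\mathbb{F}$-conflation) and the octahedral identities $d_\ast\delta''=\delta$ and $f^\ast\delta''=e_\ast\delta'$. By the backbone lemma it suffices to prove $\delta''\in\mathbb{F}$, and for this I would construct an auxiliary $\rho\in\mathbb{F}(X,Y')$ with $d_\ast\rho=\delta$ and $f^\ast\rho=e_\ast\delta'$, and then chase the difference $\delta''-\rho$ through the six-term exact sequences of \cref{prop:extricat long exact sequences} attached to the $\mathfrak{s}|_\mathbb{F}$-conflations $X'\infl Y'\defl Z'$ and $Z'\infl Y\defl X$. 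The delicate point is to ensure that the correction term lands in $e_\ast\mathbb{F}(X,X')$ rather than merely in $e_\ast\mathbb{E}(X,X')$, i.e. to reflect membership in $\mathbb{F}$ back along the $\mathfrak{s}|_\mathbb{F}$-deflation $f$; this is the extriangulated incarnation of the Dr\"axler--Reiten--Smal{\o}--Solberg criterion for relative exact structures, and I would carry it out with the homotopy pushout of \cref{prop:htpPO} and the nine lemma of \cref{prop:9lemma}.
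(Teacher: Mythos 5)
Your reduction is sound and is the standard route: for any biadditive subfunctor, (ET1), (ET2), (ET3) and (ET3)$^{\mathrm{op}}$ do hold automatically for $(\mathscr{C},\mathbb{F},\mathfrak{s}|_\mathbb{F})$, so closedness amounts to (ET4) and (ET4)$^{\mathrm{op}}$; your ``backbone'' lemma is correctly proved via the ambient (ET3); it does yield that inflation-stability implies (ET4) (and dually); and by \cref{remark:dualExtricat} everything reduces to the single cross-implication that $\mathfrak{s}|_\mathbb{F}$-inflations being stable under composition forces the same for $\mathfrak{s}|_\mathbb{F}$-deflations. The gap is that your plan for this crux is circular and, tellingly, never actually invokes the inflation-composition hypothesis.

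Keep your (ET4)$^{\mathrm{op}}$ notation: $Z'\overset{f'}{\infl}Y\overset{F}{\defl}X$ realizes $\delta\in\mathbb{F}$, $X'\overset{g'}{\infl}Z\overset{G}{\defl}Y$ realizes $\delta'\in\mathbb{F}$, and one obtains $Y'\overset{h'}{\infl}Z\overset{FG}{\defl}X$ realizing $\delta''$ and $X'\overset{e}{\infl}Y'\overset{d}{\defl}Z'$ realizing $(f')^\ast\delta'$, with $Gh'=f'd$, $d_\ast\delta''=\delta$ and $F^\ast\delta''=e_\ast\delta'$. Now apply \cref{prop:extricat long exact sequences} to the extriangle of $\delta$: the kernel of $F^\ast\colon\mathbb{E}(X,Y')\to\mathbb{E}(Y,Y')$ is exactly $\{c_\ast\delta\mid c\in\mathscr{C}(Z',Y')\}$, which is already contained in $\mathbb{F}(X,Y')$ since $\delta\in\mathbb{F}$. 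Hence any $\rho$ with $F^\ast\rho=e_\ast\delta'=F^\ast\delta''$ differs from $\delta''$ by an element of $\mathbb{F}$, so such a $\rho$ lies in $\mathbb{F}$ if and only if $\delta''$ does: the auxiliary $\rho$ you propose to construct is not an intermediate step, its existence in $\mathbb{F}$ is \emph{equivalent} to the conclusion. (If instead you correct along the other conflation $X'\infl Y'\defl Z'$, you run into precisely the difficulty you flagged, the correction term $e_\ast\sigma$ with $\sigma$ only in $\mathbb{E}(X,X')$; either way the sketch does not close, because no step uses the hypothesis.) The mechanism that actually makes the cross-implication work --- this is what Dr\"axler--Reiten--Smal{\o}--Solberg do for exact categories and Herschend--Liu--Nakaoka in the (ex)(tri)angulated setting --- is to manufacture two explicit composable $\mathfrak{s}|_\mathbb{F}$-inflations and then absorb a shear: the morphism of conflations $(1_{X'},h',f')$ gives, via a mapping-cone lemma deducible from \cref{prop:htpPO}, a conflation $Y'\overset{\left[\bsm -d\\ h'\esm\right]}{\infl}Z'\oplus Z\overset{\left[\bsm f'\;G\esm\right]}{\defl}Y$ realizing $e_\ast\delta'\in\mathbb{F}$; also $f'\oplus 1_Z\colon Z'\oplus Z\infl Y\oplus Z$ is an $\mathfrak{s}|_\mathbb{F}$-inflation (its conflation realizes $\delta\oplus 0$). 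By the hypothesis, the composite $\left[\bsm -f'd\\ h'\esm\right]=\left[\bsm -Gh'\\ h'\esm\right]\colon Y'\infl Y\oplus Z$ is an $\mathfrak{s}|_\mathbb{F}$-inflation, and post-composing with the automorphism $\left[\bsm 1_Y\; G\\ 0\;\; 1_Z\esm\right]$ of $Y\oplus Z$ (legitimate thanks to $Gh'=f'd$) shows that $\left[\bsm 0\\ h'\esm\right]$ is an $\mathfrak{s}|_\mathbb{F}$-inflation. Its conflation $Y'\infl Y\oplus Z\overset{1_Y\oplus FG}{\defl}Y\oplus X$ realizes $0\oplus\delta''$, so the backbone lemma gives $0\oplus\delta''\in\mathbb{F}$, and pulling back along $X\to Y\oplus X$ gives $\delta''\in\mathbb{F}$. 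This shear-and-absorb step, where the composition hypothesis enters, is the idea missing from your proposal.
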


\begin{example}
\label{ex:relative}
Let $\mathscr{T}$ be a $\mathbb{K}$-linear, Hom-finite, 2-Calabi--Yau triangulated category with a cluster tilting object $T$, and let $\mathbb{E}=\mathscr{T}(-,\Sigma-)$.
Define $\mathbb{F}=[\Sigma T](-,\Sigma-)$, where $[\Sigma T]$ denotes the ideal of morphisms factoring through $\operatorname{add}\Sigma T$.
Then $\mathfrak{s}|_\mathbb{F}$-deflations are stable under composition because $\mathscr{T}(T,f)=0$ if and only if $f\in[\Sigma T]$.
Thus $\mathbb{F}$ is a closed subfunctor and $(\mathscr{T},\mathbb{F},\mathfrak{s}|_\mathbb{F})$ is extriangulated.
This extriangulated structure will appear in \cref{section:g-vectors} where it is used to categorify $g$-vectors of cluster algebras.
\end{example}

\begin{proposition}[{\cite[Proposition 3.19]{HerschendLiuNakaoka-I}}]
 Let $\mathcal{R}$ be a full additive subcategory of $\mathscr{C}$, and define 
 $\mathbb{E}_\mathcal{R}(Z,X) = \{\delta\in\mathbb{E}(Z,X)\;|\;\forall\,R\in\mathcal{R}, (\delta_\sharp)_R=0\}$.
 Then $\mathbb{E}_\mathcal{R}$ is a closed subfunctor of $\mathbb{E}$ and gives the biggest extriangulated substructure for which all objects in $\mathcal{R}$ are projective.
 There is a dual version, denoted by $\mathbb{E}^\mathcal{R}$.
\end{proposition}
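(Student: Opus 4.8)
The plan is to verify in turn the four assertions: that $\mathbb{E}_\mathcal{R}$ is a biadditive subfunctor, that it is closed, that each $R\in\mathcal{R}$ becomes projective, and that it is maximal for this last property. For the subfunctor structure, I first note that for fixed $Z,X$ the subset $\mathbb{E}_\mathcal{R}(Z,X)$ is the intersection, over all $R\in\mathcal{R}$ and all $g\in\mathscr{C}(R,Z)$, of the kernels of the group homomorphisms $\mathbb{E}(Z,X)\to\mathbb{E}(R,X)$, $\delta\mapsto g^\ast\delta$; hence it is a subgroup. Stability under the functorial maps then follows from the commutation relations recorded just after the definition of an $\mathbb{E}$-extension: for $h\in\mathscr{C}(Z',Z)$ one has $g^\ast(h^\ast\delta)=(hg)^\ast\delta$, and for $f\in\mathscr{C}(X,X')$ one has $g^\ast(f_\ast\delta)=f_\ast(g^\ast\delta)$, so both $h^\ast\delta$ and $f_\ast\delta$ remain in $\mathbb{E}_\mathcal{R}$ whenever $\delta$ does.

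The key reformulation, which drives everything else, is to read the defining condition through \cref{prop:extricat long exact sequences}. If $X\xrightarrow{x}Y\xrightarrow{p}Z\overset{\delta}{\dashrightarrow}$ is an extriangle and $R\in\mathcal{R}$, then exactness of $\mathscr{C}(R,Y)\xrightarrow{p\circ-}\mathscr{C}(R,Z)\xrightarrow{\delta_\sharp}\mathbb{E}(R,X)$ shows that $(\delta_\sharp)_R=0$ if and only if every morphism $R\to Z$ factors through $p$. Thus $\delta\in\mathbb{E}_\mathcal{R}(Z,X)$ exactly when the deflation $p$ enjoys the lifting property that $\mathscr{C}(R,p)$ is surjective for all $R\in\mathcal{R}$, and this is manifestly a property of $p$ alone, independent of which extension it realizes.

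To obtain closedness I would invoke the criterion of \cite[Proposition 3.11 \& Lemma 3.15]{HerschendLiuNakaoka-I}, so that it suffices to check that $\mathfrak{s}|_{\mathbb{E}_\mathcal{R}}$-deflations are stable under composition. Given two of them, $Y\xrightarrow{p}Z$ and $Z\xrightarrow{q}W$, the composite $qp$ is again a deflation for the ambient structure, and the lifting characterization makes composition-stability immediate: any $r\colon R\to W$ lifts first along $q$ and then along $p$, hence factors through $qp$. For projectivity, given $\delta\in\mathbb{E}_\mathcal{R}(R,X)$ with $R\in\mathcal{R}$, evaluating the vanishing map $(\delta_\sharp)_R$ at $\mathrm{id}_R$ gives $\delta=\mathrm{id}_R^\ast\delta=0$, so $\mathbb{E}_\mathcal{R}(R,X)=0$.

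For maximality, let $\mathbb{F}\subseteq\mathbb{E}$ be any biadditive subfunctor for which every $R\in\mathcal{R}$ is projective, i.e.\ $\mathbb{F}(R,X)=0$ for all $X$. For $\delta\in\mathbb{F}(Z,X)$ and $g\in\mathscr{C}(R,Z)$, the element $g^\ast\delta$ lies in $\mathbb{F}(R,X)=0$ since $\mathbb{F}$ is a subfunctor, whence $\delta\in\mathbb{E}_\mathcal{R}$ and $\mathbb{F}\subseteq\mathbb{E}_\mathcal{R}$. The dual assertion for $\mathbb{E}^\mathcal{R}$, defined by the vanishing of $(\delta^\sharp)_R$ and making the objects of $\mathcal{R}$ injective, follows formally by \cref{remark:dualExtricat}. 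The one step that is not purely formal is closedness, and there the real content is compressed into the translation of the defining condition into the lifting property; once that is in hand, stability under composition is automatic, the only external input being the ambient fact that a composite of deflations is a deflation.
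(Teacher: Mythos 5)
Your proof is correct. Note that the survey itself does not prove this proposition — it is quoted from \cite[Proposition 3.19]{HerschendLiuNakaoka-I} — so there is no in-paper argument to compare against; but your route (checking $\mathbb{E}_\mathcal{R}$ is a biadditive subfunctor, reformulating $(\delta_\sharp)_R=0$ as surjectivity of $\mathscr{C}(R,p)$ via the long exact sequence of \cref{prop:extricat long exact sequences}, deducing closure from the composition criterion quoted immediately before the proposition, and the one-line projectivity and maximality checks) is exactly the standard argument of the cited source, with the only implicit points being the automatic additivity of the subfunctor and the ambient fact that deflations compose, both of which are harmless.
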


The closed subfunctor $\mathbb{F}$ of~\cref{ex:relative} coincides with $\mathbb{E}_{\mathcal{R}}$, where $\mathcal{R}=\operatorname{add}T$.

\begin{example}
Let $\mathcal{R}$ be a rigid, full, additive subcategory of some triangulated category $\mathscr{T}$.
Then, $\mathcal{R}\ast\Sigma\mathcal{R}$ might not be extension-closed in $\mathscr{T}$. However, it is extension-closed in $(\mathscr{T},\mathbb{E}_\mathcal{R})$, whence also extriangulated.
\end{example}

\subsubsection{Quotients by projectives-injectives}

Let $(\mathscr{C},\mathbb{E},\mathfrak{s})$ be an extriangulated category, and let $\mathcal{Q}$ be a full subcategory each of whose object is projective-injective.
Because $\mathbb{E}$ vanishes on $\mathcal{Q}^\text{op}\times\mathscr{C}$ and on $\mathscr{C}^\text{op}\times\mathcal{Q}$, it induces a bifunctor $\mathbb{E}:(\mathscr{C}/[\mathcal{Q}])^\text{op}\times(\mathscr{C}/[\mathcal{Q}])\to Ab$.
For $X,Z\in\mathscr{C}/[\mathcal{Q}]$ and $\delta\in\mathbb{E}(Z,X)$ let $\mathfrak{t}(\delta) = [X\xrightarrow{\underline{i}} Y\xrightarrow{\underline{p}}Z]$, where
$\mathfrak{s}(\delta) = [X\xrightarrow{i}Y\xrightarrow{p}Z]$ and $\underline{f}$ denotes the class of a morphism $f\in\mathscr{C}$ in $\mathscr{C}/[\mathcal{Q}]$.
Then $(\mathscr{C}/[\mathcal{Q}],\mathbb{E},\mathfrak{t})$ is extriangulated~\cite[Proposition 3.30]{NakaokaPalu}.

This construction is useful for additive categorification of cluster algebras with coefficients.
Indeed, acyclic cluster algebras with principal coefficients cannot be categorified by Frobenius exact categories (see \cite[Theorem 5.6]{FuKeller} and \cite[Proposition 2.3]{Pressland} for precise statements).
However, Matthew Pressland constructs a Frobenius extriangulated category, obtained by killing half of the projective-injectives of a Frobenius exact category and proves that it categorifies acyclic cluster algebras with principal coefficients~\cite[Corollary 1]{Pressland}.

\subsubsection{Categories of Cohen--Macaulay dg modules}

Let $A$ be dg algebra over a field, and assume that $A$ is connective ($\operatorname{H}^{>0}(A)=0$), proper (its cohomology is of finite total dimension) and Gorenstein (the thick subcategory of the derived category $\mathcal{D}A$ generated by the dual of $A$ coincide with the perfect derived category $\operatorname{per}A$ of $A$).

\begin{definition}[{\cite[Definition 2.1]{Jin}}]
 A dg $A$-module $M$ is \defn{Cohen-Macaulay} if $M\in\mathcal{D}^\text{b}A$, $\operatorname{H}^{>0}(M)=0$ and $\operatorname{Hom}_{\mathcal{D}A}(M,A[i])=0$ for all $i>0$.
\end{definition}

\begin{theorem}[{\cite[Theorem 2.4]{Jin}}]
 The category $\operatorname{CM}A$ of Cohen-Macaulay dg-modules is a Frobenius extriangulated category with subcategory of projective-injectives $\operatorname{add}A$.
 Moreover, there is a triangle equivalence
 $\underline{\operatorname{CM}}A \simeq \mathcal{D}^\text{b}A/\operatorname{per}A.$
\end{theorem}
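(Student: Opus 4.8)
The plan is to realise $\operatorname{CM}A$ as a strictly full, extension-closed subcategory of the triangulated category $\mathcal{D}A$ and then to run a dg enhancement of Buchweitz's theorem relating maximal Cohen--Macaulay modules to the singularity category. First I would verify extension-closedness: given a triangle $X\to Y\to Z\to X[1]$ in $\mathcal{D}A$ with $X,Z\in\operatorname{CM}A$, membership $Y\in\mathcal{D}^b A$ is automatic, the long exact cohomology sequence yields $H^{>0}(Y)=0$ from $H^{>0}(X)=H^{>0}(Z)=0$, and applying $\operatorname{Hom}_{\mathcal{D}A}(-,A[i])$ yields $\operatorname{Hom}_{\mathcal{D}A}(Y,A[i])=0$ for $i>0$. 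By the construction of \cref{sssection:ext-closed}, $\operatorname{CM}A$ is then extriangulated with $\mathbb{E}(Z,X)=\operatorname{Hom}_{\mathcal{D}A}(Z,X[1])$. Connectivity and properness give $A\in\operatorname{CM}A$, and $\operatorname{add}A$ is projective-injective because, for every $X\in\operatorname{CM}A$, one has $\mathbb{E}(A,X)=\operatorname{Hom}_{\mathcal{D}A}(A,X[1])=H^1(X)=0$ and $\mathbb{E}(X,A)=\operatorname{Hom}_{\mathcal{D}A}(X,A[1])=0$, these two vanishings being precisely the defining conditions of a Cohen--Macaulay dg module.

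Next I would show there are enough projectives, all of them in $\operatorname{add}A$. Given $X\in\operatorname{CM}A$, the finite-dimensional space $H^0(X)$ is finitely generated over $H^0(A)$, so I can choose $p\colon A^{\oplus n}\to X$ inducing a surjection on $H^0$; completing $p$ to a triangle $\Omega X\to A^{\oplus n}\xrightarrow{p}X\to\Omega X[1]$, the long exact sequences in cohomology and in $\operatorname{Hom}_{\mathcal{D}A}(-,A[\bullet])$ show $\Omega X\in\operatorname{CM}A$, surjectivity on $H^0$ being exactly what forces $H^1(\Omega X)=0$ while connectivity disposes of the remaining conditions. Thus $p$ is a deflation with projective source. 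Any projective $Q$ then receives a deflation from some $P\in\operatorname{add}A$, which splits because its extension class lies in $\mathbb{E}(Q,-)=0$; hence $Q$ is a direct summand of $P$ and the projectives are exactly $\operatorname{add}A$.

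For enough injectives I would invoke the Gorenstein hypothesis through the derived dual $\mathbb{D}=R\operatorname{Hom}_A(-,A)$. The condition that $\operatorname{per}A$ is the thick subcategory generated by the $\field$-dual $DA$ is what guarantees that $\mathbb{D}$ is a well-defined exact duality exchanging $\operatorname{CM}A$ and $\operatorname{CM}(A^{\mathrm{op}})$, preserving boundedness, swapping inflations and deflations, and sending $\operatorname{add}A$ to $\operatorname{add}A$. Dualising the previous paragraph for $A^{\mathrm{op}}$ then produces enough injectives for $A$, again all in $\operatorname{add}A$. Consequently $\operatorname{CM}A$ has enough projectives and injectives, both classes equal to $\operatorname{add}A$, so it is Frobenius; \cref{prop:Frobenius} gives that $\underline{\operatorname{CM}}A$ is triangulated, with suspension $\Sigma$ computed from chosen extriangles $X\infl I_X\defl\Sigma X$ with $I_X\in\operatorname{add}A$.

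Finally, for the equivalence, consider $F\colon\operatorname{CM}A\hookrightarrow\mathcal{D}^b A\to\mathcal{D}^b A/\operatorname{per}A$, the inclusion followed by the Verdier quotient. Since $\operatorname{add}A\subseteq\operatorname{per}A$, $F$ annihilates the projective-injectives and factors through an additive functor $\overline{F}\colon\underline{\operatorname{CM}}A\to\mathcal{D}^b A/\operatorname{per}A$; this $\overline{F}$ is exact because $I_X\in\operatorname{per}A$ identifies $\Sigma X$ with $X[1]$ in the quotient. Full faithfulness I would obtain from \cref{prop:extricat long exact sequences} together with the projective-injectivity of $\operatorname{add}A$: the six-term sequences, combined with the orthogonality of Cohen--Macaulay objects to $\operatorname{per}A$ in the relevant degrees, identify morphisms killed in the singularity category with those factoring through $\operatorname{add}A$, i.e. with the relations defining $\underline{\operatorname{CM}}A$. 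The hard part, and the main obstacle, will be essential surjectivity: I must show that every $Y\in\mathcal{D}^b A$ is isomorphic in $\mathcal{D}^b A/\operatorname{per}A$ to a Cohen--Macaulay module. The mechanism is to combine truncation (using connectivity to control $H^{>0}$) with iterated syzygies along deflations from $\operatorname{add}A$ and the duality $\mathbb{D}$ (to correct the condition $\operatorname{Hom}_{\mathcal{D}A}(-,A[>0])=0$); the delicate point, exactly as in Buchweitz's original argument, is that the Gorenstein hypothesis is what makes this approximation process terminate after finitely many steps, each step altering $Y$ only by a perfect complex and a shift.
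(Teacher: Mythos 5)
The survey does not prove this statement at all: it is quoted directly from \cite[Theorem 2.4]{Jin}, so your proposal can only be measured against Jin's argument, which --- like your sketch --- is a dg adaptation of Buchweitz's theorem on maximal Cohen--Macaulay modules over Gorenstein rings. The first half of your proposal is correct and essentially coincides with that argument: extension-closedness of $\operatorname{CM}A$ in $\mathcal{D}^\text{b}A$ (hence the extriangulated structure via \cref{sssection:ext-closed}), the identification of $\operatorname{add}A$ as projective-injective objects, and the construction of enough projectives by lifting generators of $H^0(X)$ and checking that the syzygy stays Cohen--Macaulay are all carried out correctly, including the observation that surjectivity on $H^0$ is what kills $H^1(\Omega X)$.

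The remaining steps have genuine gaps. First, that $\mathbb{D}=R\operatorname{Hom}_A(-,A)$ restricts to an exact duality $\operatorname{CM}A\leftrightarrow\operatorname{CM}(A^{\mathrm{op}})$ is not a formal consequence of $\operatorname{thick}(DA)=\operatorname{per}A$: one must prove that the biduality morphism $X\to R\operatorname{Hom}_{A^{\mathrm{op}}}(R\operatorname{Hom}_A(X,A),A)$ is invertible on all of $\operatorname{CM}A$ (not just on perfect objects), and that the Gorenstein condition passes to $A^{\mathrm{op}}$; this is a separate lemma in Jin's paper, and it is where properness and Gorensteinness do real work. Second, full faithfulness does not follow from \cref{prop:extricat long exact sequences}: those sequences compute morphisms and extensions inside $\operatorname{CM}A$, whereas morphisms in $\mathcal{D}^\text{b}A/\operatorname{per}A$ are equivalence classes of roofs, and identifying them with $\operatorname{Hom}$ modulo $[\operatorname{add}A]$ --- i.e.\ showing that any map between Cohen--Macaulay objects factoring through a perfect complex factors through $\operatorname{add}A$, and that every roof can be straightened --- is the technical heart of all Buchweitz-type theorems; you have waved through precisely the hardest step while calling essential surjectivity the main obstacle. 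Third, your essential-surjectivity mechanism starts with a step that fails: the smart truncation $\sigma^{\leq0}Y$ is in general \emph{not} isomorphic to $Y$ in $\mathcal{D}^\text{b}A/\operatorname{per}A$, because $\sigma^{>0}Y$ is bounded but need not be perfect (take $A=\field[x]/(x^2)$ concentrated in degree $0$ and $Y=\field[-1]$: then $\sigma^{\leq0}Y=0$ while $Y\not\cong0$ in the singularity category). The correct mechanism uses shifts, not truncations: $Y[m]$ is connective for $m\gg0$; its iterated syzygies satisfy $\operatorname{Hom}(\Omega^nY[m],A[i])\cong\operatorname{Hom}(Y[m],A[n+i])$ for $i>0$, and this vanishes for $n\gg0$ because Gorensteinness forces $R\operatorname{Hom}_A(Y,A)$ to be cohomologically bounded, so that high syzygies are Cohen--Macaulay; the shifts are then undone by cosyzygies inside the Frobenius category $\operatorname{CM}A$, where $\Sigma$ is an autoequivalence of the stable category by \cref{prop:Frobenius}. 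Neither truncation nor the duality plays any role in this step.
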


\subsubsection{Higgs categories}

In \cite{Wu}, Yilin Wu introduces the relative cluster category $\mathcal{C}_n(A,B)$ associated with a morphism $f:B\to A$ of dg algebras satisfying some technical conditions (involving the positive integer $n$).
It is related to Amiot--Guo's cluster categories via the exact sequence of triangulated categories
\[
 0\to\operatorname{per}(eAe) \to \mathcal{C}_n(A,B) \to \mathcal{C}_n(\overline{A})\to 0
\]
where $e=f(1_B)$, $\overline{A}$ is the homotopy cofiber of $f$ and $\mathcal{C}_n(\overline{A})$ is Amiot--Guo's generalised $n$-Calabi--Yau cluster category.

\begin{definition}[{\cite[Theorem 5.42]{Wu}}]
 The Higgs category $\mathscr{H}$ is the full subcategory $^{\perp_{>0}}(eA)\cap(eA)^{\perp_{>0}}$ of $\mathcal{C}_n(A,B)$.
\end{definition}

\begin{theorem}[{\cite[Theorem 5.46]{Wu}}]
The Higgs category $\mathscr{H}$ is a Frobenius extriangulated category with subcategory of projective-injectives $\operatorname{add}(eA)$.
Moreover, its stable category is triangle equivalent to $\mathcal{C}_n(\overline{A})$.
\end{theorem}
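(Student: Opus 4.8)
The plan is to realise $\mathscr{H}$ as an extension-closed subcategory of the triangulated category $\mathcal{C}_n(A,B)$, verify the Frobenius axioms directly, and then identify the stable category by restricting the Verdier localisation functor in the displayed exact sequence. First I would observe that $\mathscr{H}={}^{\perp_{>0}}(eA)\cap(eA)^{\perp_{>0}}$ is extension-closed: applying $\operatorname{Hom}(-,eA[i])$ and $\operatorname{Hom}(eA,-[i])$ to a triangle $X\to Y\to Z\to X[1]$ with $X,Z\in\mathscr{H}$ squeezes $\operatorname{Hom}(Y,eA[i])$ and $\operatorname{Hom}(eA,Y[i])$ between vanishing groups for $i>0$, so $Y\in\mathscr{H}$. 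By the construction of \cref{sssection:ext-closed}, $\mathscr{H}$ is then extriangulated with $\mathbb{E}(Z,X)=\operatorname{Hom}_{\mathcal{C}_n(A,B)}(Z,X[1])$. The same orthogonality gives $eA$ projective-injective: for $X\in\mathscr{H}$ one has $\mathbb{E}(eA,X)=\operatorname{Hom}(eA,X[1])=0$ since $X\in(eA)^{\perp_{>0}}$, and dually $\mathbb{E}(X,eA)=\operatorname{Hom}(X,eA[1])=0$ since $X\in{}^{\perp_{>0}}(eA)$, so every object of $\operatorname{add}(eA)$ is projective and injective.

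Next I would produce enough projectives and injectives. Given $X\in\mathscr{H}$, I would take a right $\operatorname{add}(eA)$-approximation $P\to X$ (available since the ambient category is Hom-finite, so $\operatorname{add}(eA)$ is functorially finite) and complete it to a triangle $X'\to P\to X\to X'[1]$; the approximation property together with the two half-orthogonality conditions forces $X'\in\mathscr{H}$, making $P\defl X$ a deflation with projective source. The dual construction, using left approximations, yields enough injectives. The crucial input is that $\mathcal{C}_n(A,B)$ carries a relative $n$-Calabi--Yau structure relative to $eA$, producing a functorial Serre-type duality that interchanges the conditions $\operatorname{Hom}(-,eA[\ast])=0$ and $\operatorname{Hom}(eA,-[\ast])=0$; this is what forces the projectives and injectives to coincide, both equal to $\operatorname{add}(eA)$. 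Hence $\mathscr{H}$ is Frobenius, and \cref{prop:Frobenius} makes $\underline{\mathscr{H}}$ triangulated with exact quotient functor.

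For the triangle equivalence, I would restrict the Verdier quotient $\pi:\mathcal{C}_n(A,B)\to\mathcal{C}_n(\overline{A})$ to $\mathscr{H}$. Since $\operatorname{add}(eA)\subseteq\operatorname{per}(eAe)=\ker\pi$, the functor $\pi$ kills every morphism factoring through a projective-injective, so it descends to $\overline{\pi}:\underline{\mathscr{H}}\to\mathcal{C}_n(\overline{A})$. This $\overline{\pi}$ is a triangle functor: the suspension $\Sigma$ on $\underline{\mathscr{H}}$ comes from triangles $X\infl I_X\defl\Sigma X$ with $I_X\in\operatorname{add}(eA)$, and applying $\pi$ (which annihilates $I_X$) gives $\overline{\pi}(\Sigma X)\cong\overline{\pi}(X)[1]$, while the distinguished triangles of $\underline{\mathscr{H}}$ arise from conflations, i.e.\ from genuine triangles of $\mathcal{C}_n(A,B)$, which $\pi$ sends to triangles of $\mathcal{C}_n(\overline{A})$. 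It then remains to check essential surjectivity and full faithfulness: for the former I would lift an object of $\mathcal{C}_n(\overline{A})$ and repair it into $\mathscr{H}$ by successive approximation triangles with $eA$ that become isomorphisms under $\pi$; for the latter I would compare $\underline{\mathscr{H}}(X,Y)$ with the roofs computing $\mathcal{C}_n(\overline{A})(\pi X,\pi Y)$, using orthogonality against $eA$ to rectify each roof inside $\mathscr{H}$ and to see that the morphisms killed are precisely those of the ideal $[\operatorname{add}(eA)]$.

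The main obstacle is twofold, and both parts rest on the relative Calabi--Yau structure of $\mathcal{C}_n(A,B)$. First, showing that the projectives and injectives genuinely coincide—rather than merely both containing $\operatorname{add}(eA)$—requires the relative Serre duality to interchange the two half-orthogonality conditions; without it one obtains separate, possibly unequal, classes. Second, full faithfulness of $\overline{\pi}$ is delicate because morphisms in the Verdier quotient are computed by a calculus of fractions, and one must control the roofs via the relative Calabi--Yau property to confirm that $\ker\pi$ meets $\mathscr{H}$ exactly in $[\operatorname{add}(eA)]$. I expect these two Calabi--Yau-theoretic verifications to be the technical heart of the argument, the remaining steps being formal consequences of \cref{sssection:ext-closed,prop:Frobenius}.
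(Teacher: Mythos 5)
First, a point of reference: the survey does not prove this statement at all --- it is quoted verbatim from \cite{Wu}, so your proposal can only be compared with Wu's argument and judged on its own internal correctness. Your overall architecture is the right one and does match the strategy of the cited proof: extension-closedness of $\mathscr{H}$ by the two-sided squeezing argument, so that \cref{sssection:ext-closed} applies; projective-injectivity of $\operatorname{add}(eA)$ from the defining orthogonality; enough projectives and injectives via approximation triangles whose third term is checked to stay in $\mathscr{H}$; then \cref{prop:Frobenius}, and identification of $\underline{\mathscr{H}}$ by descending the Verdier localisation $\pi$. The individual diagram chases you sketch (that the cocone of a right $\operatorname{add}(eA)$-approximation lies in $\mathscr{H}$ thanks to the approximation property, that $\pi$ intertwines $\Sigma$ with $[1]$ because $\pi(I_X)=0$, that conflations go to triangles) are correct.

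The genuine gap is in the step you treat as routine: the \emph{existence} of the approximations. You justify right and left $\operatorname{add}(eA)$-approximations by asserting that ``the ambient category is Hom-finite''. This is unjustified, and in fact false in general: the relative cluster category $\mathcal{C}_n(A,B)$ is not Hom-finite (Wu emphasises this; it is one of the reasons for introducing the Higgs category in the first place), and Hom-finiteness of $\mathscr{H}$ itself is a nontrivial theorem of \cite{Wu}, proved via the relative fundamental domain in $\operatorname{per}(A)$ and the relative Calabi--Yau structure --- so it cannot be invoked to produce the approximation triangles without circularity. Since these triangles are the crux of the Frobenius property, this is where your argument breaks down; in Wu's proof they come from structural results about the fundamental domain, not from generic functorial finiteness. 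Two further remarks. (i) Your appeal to a relative Serre duality to force projectives and injectives to coincide is misplaced: once one has enough projectives and enough injectives coming from $\operatorname{add}(eA)$, and $\operatorname{add}(eA)$ is projective-injective, any projective $Q$ admits a deflation $P\defl Q$ with $P\in\operatorname{add}(eA)$ whose conflation splits because $\mathbb{E}(Q,-)=0$, so $Q\in\operatorname{add}(eA)$; dually for injectives. The place where the Calabi--Yau input is genuinely unavoidable is the finiteness/approximation step just discussed, and your last step --- identifying the kernel ideal of $\underline{\pi}$ with $[\operatorname{add}(eA)]$, which is delicate precisely because $\mathscr{H}$ is orthogonal only to $eA$ in positive degrees and not to all of $\operatorname{per}(eAe)=\ker\pi$. (ii) You also use silently that $eA\in\mathscr{H}$, i.e.\ $\operatorname{Hom}(eA,eA[i])=0$ for $i>0$; this is true (it follows from the connectivity of $eAe$ together with the fullness of the embedding $\operatorname{per}(eAe)\hookrightarrow\mathcal{C}_n(A,B)$ in the displayed exact sequence), but every orthogonality computation in your sketch rests on it, so it must be stated and proved.
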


%%%%%%%%%%%
\subsection{Enhancements}
\label{subsection:enhancements}
%%%%%%%%%%%

We briefly mention two enhancements for extriangulated categories that are known so far.
They would deserve more details and we refer the interested reader to \cite{Barwick-exact,Klemenc,BorveTrygsland} for exact $\infty$-categories and \cite{Chen-PhD} for exact dg categories.

\subsubsection{Topological extriangulated categories}

Clark Barwick introduced an analogue of exact categories for $\infty$-categories in \cite{Barwick-exact} (see also \cite{DyckerhoffKapranov} for the non-additive case) by interpreting the axioms in an infinity-categorical meaning.
For example, kernel-cokernel pairs should be interpreted as fiber-cofiber sequences.

Exact $\infty$-categories can be seen as a form of enhancement for extriangulated categories:

\begin{theorem}[{\cite{NakaokaPalu2,Klemenc}}]
Let $\mathscr{E}$ be a small exact $\infty$-category.
Then its homotopy category is naturally extriangulated.
\end{theorem}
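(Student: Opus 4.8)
The plan is to equip $\mathscr{C}:=\operatorname{Ho}(\mathscr{E})$ with an explicit extriangulated structure, and in parallel to note a softer argument via Klemenc's stable hull. First I would record that $\mathscr{C}$ is additive, since $\mathscr{E}$ is an additive $\infty$-category and passing to $\pi_0$ of mapping spaces preserves biproducts. The bifunctor is defined by setting $\mathbb{E}(Z,X):=\pi_0\,\mathcal{E}\mathrm{xt}_{\mathscr{E}}(Z,X)$, where $\mathcal{E}\mathrm{xt}_{\mathscr{E}}(Z,X)$ is the space of exact (fibre--cofibre) sequences in $\mathscr{E}$ whose outer terms are $X$ and $Z$; concretely, the space of bicartesian squares with an ingressive $X\infl Y$, an egressive $Y\defl Z$, and zero corners. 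The additive structure of $\mathscr{E}$, together with the compatibility of the $S_\bullet$/extension-space construction with biproducts, endows $\mathbb{E}(Z,X)$ with an abelian group structure (Baer sum through the biproduct), while functoriality in $X$ and in $Z$ comes from pushing out along ingressives and pulling back along egressives. This yields (ET1).

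Next I would define the realization. A class $\delta\in\mathbb{E}(Z,X)$ is a connected component of $\mathcal{E}\mathrm{xt}_{\mathscr{E}}(Z,X)$; choosing a representative exact sequence $X\infl Y\defl Z$ and projecting it to $\mathscr{C}$ gives a three-term sequence whose equivalence class I declare to be $\mathfrak{s}(\delta)$. Well-definedness is the assertion that a path between two representatives in the extension space induces an equivalence of the middle objects, which becomes an isomorphism in $\mathscr{C}$ compatible with the two maps; this is exactly the relation of \cref{DefSqEquiv}. Additivity of $\mathfrak{s}$ (the two conditions of an additive realization) follows from the compatibility of the extension-space construction with biproducts, the split extension being realized by the biproduct sequence. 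The morphism-realization property $(\ast)$ I would obtain from functoriality of mapping spaces: a morphism $(f,h)\colon\delta\to\delta'$ provides compatible basepoints whose comparison produces a coherent morphism of exact sequences in $\mathscr{E}$, the image of which supplies the required $g$ completing the ladder. Together these give (ET2).

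The remaining axioms are where the $\infty$-categorical universal properties do the work. For (ET3) and its dual I would lift a square that commutes only up to homotopy in $\mathscr{C}$ to a coherent square in $\mathscr{E}$, then use that the relevant corner square is cocartesian (resp.\ cartesian): the universal property of the pushout produces the comparison $h$ with $h\circ y=y'\circ g$, and the induced map on extensions is the sought morphism $(f,h)$. For (ET4) and its dual I would build the $3\times 3$ diagram of the axiom as a grid of bicartesian squares in $\mathscr{E}$ --- the $\infty$-categorical avatar of the octahedron --- obtained by forming iterated cofibres of the composable ingressives; the outer rows and columns of this grid are exact sequences, and reading off the object $Y'$ together with the maps $d,e,h,h'$ and then applying $\pi_0$ yields the extension $\delta''$ and the compatibilities (i)--(iii). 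The main obstacle is precisely this coherence bookkeeping: in $\mathscr{C}$ the diagrams commute only up to homotopy and (co)fibres are not functorial, so every existence claim must be traced back to a genuinely coherent diagram in $\mathscr{E}$ before taking $\pi_0$; verifying the compatibility and sign conditions in (ET4) through this translation is the most delicate point.

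Finally, as a conceptual shortcut I would invoke Klemenc's stable hull: every small exact $\infty$-category $\mathscr{E}$ embeds as an extension-closed subcategory of a stable $\infty$-category $\mathscr{S}$ inducing its exact structure. Then $\operatorname{Ho}(\mathscr{S})$ is triangulated and $\mathscr{C}=\operatorname{Ho}(\mathscr{E})$ is a full, extension-closed subcategory of it, so the extriangulated structure is simply the restricted one of \cref{sssection:ext-closed}; this both reproves the theorem and makes the naturality in $\mathscr{E}$ transparent.
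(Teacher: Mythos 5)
Your proposal is correct, and in fact it packages \emph{both} of the arguments that the survey points to: this theorem is stated there by citation, with no written proof, and the two references correspond exactly to your two routes. Your ``conceptual shortcut'' is the route the survey itself makes available on the page: \cref{thm:Klemenc} embeds $\mathscr{E}$ as a full, extension-closed subcategory of a stable $\infty$-category with matching conflations, the homotopy category of a stable $\infty$-category is triangulated with triangles precisely the images of fibre--cofibre sequences (this last point is what you should say explicitly, since it is what transfers extension-closedness and the identification of conflations from the $\infty$-level to the homotopy-category level), and then \cref{sssection:ext-closed} finishes the job; this is Klemenc's argument, and it also yields the survey's corollary that such categories are \emph{topological} extriangulated. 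Your direct construction --- $\mathbb{E}(Z,X)=\pi_0$ of the extension space, realization by choosing representatives, (ET3) via coherent lifts and the pushout universal property, (ET4) via a grid of bicartesian squares --- is essentially the proof in the other cited reference (Nakaoka--Palu's external triangulation of the homotopy category of an exact quasi-category). The trade-off you implicitly identify is real: the direct construction is self-contained, makes $\mathbb{E}$ and the group structure explicit, and does not require building the stable hull (a Gabriel--Quillen-type theorem, which is the heavy input of the shortcut), but it carries the coherence bookkeeping you flag as delicate; the shortcut outsources all of that to the stable setting at the cost of invoking the hull, and gives naturality for free from the universal property of $\mathcal{H}^{\mathrm{st}}$. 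If you present both, the only loose end worth closing is that the two structures agree, which amounts to identifying $\pi_0$ of the extension space with $\operatorname{Hom}(Z,\Sigma X)$ computed in the homotopy category of the hull.
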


An extriangulated category which is equivalent to the homotopy category of an exact $\infty$-category is called a \defn{topological extriangulated category}.
The following result is an $\infty$-categorical version of the Gabriel--Quillen embedding theorem.

\begin{theorem}[{\cite{Klemenc}}]
\label{thm:Klemenc}
 Let $\mathscr{E}$ be a small exact $\infty$-category.
 Then there is a stable $\infty$-category $\mathcal{H}^\text{st}(\mathscr{E})$ and a fully faithful functor $\mathscr{E}\to\mathcal{H}^\text{st}(\mathscr{E})$ exhibiting (the essential image of) $\mathscr{E}$ as a full, extension-closed subcategory.
 Moreover, a diagram in $\mathscr{E}$ is a conflation if and only if its image in $\mathcal{H}^\text{st}(\mathscr{E})$ is a fiber-cofiber sequence.
\end{theorem}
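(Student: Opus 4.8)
The plan is to imitate the classical Gabriel--Quillen embedding, replacing ``abelian category'' by ``stable $\infty$-category'' and ``left exact functor'' by ``functor sending conflations to fiber sequences of spectra.'' Concretely, I would work inside the stable $\infty$-category $\operatorname{Fun}^{\mathrm{add}}(\mathscr{E}^{\mathrm{op}},\mathrm{Sp})$ of additive (finite-product-preserving) presheaves of spectra on $\mathscr{E}$, in which limits and colimits are computed pointwise, and define $\mathcal{H}^{\mathrm{st}}(\mathscr{E})$ to be the full subcategory of \emph{conflation-exact} presheaves: those $G$ such that, for every conflation $A\infl B\defl C$ in $\mathscr{E}$, the induced sequence $G(C)\to G(B)\to G(A)$ is a fiber sequence of spectra. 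The candidate embedding is the localized Yoneda functor $\iota=L\circ y$, where $y(X)=\operatorname{map}_{\mathscr{E}}(-,X)$ and $L$ is the reflector onto $\mathcal{H}^{\mathrm{st}}(\mathscr{E})$; note that the plain representable $y(X)$ is in general \emph{not} conflation-exact (already $\operatorname{map}_{\mathscr{E}}(-,C)$ fails to be so whenever $\operatorname{Ext}^1(-,C)\ne 0$), which is exactly why the localization is essential.

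The first routine step is stability of $\mathcal{H}^{\mathrm{st}}(\mathscr{E})$: being cut out by a preservation-of-fiber-sequences condition it is closed under pointwise limits, and since in $\mathrm{Sp}$ a fiber sequence is the same as a cofiber sequence, and cofiber sequences are stable under pointwise colimits, it is also closed under colimits; hence it is a stable subcategory and the inclusion preserves shifts. The heart of the argument is then the computation of mapping spectra in $\mathcal{H}^{\mathrm{st}}(\mathscr{E})$. Using that $y(X)$ corepresents evaluation, $\operatorname{map}_{\mathcal{H}^{\mathrm{st}}}(\iota X,\iota Y)\simeq\operatorname{map}(y(X),Ly(Y))\simeq(Ly(Y))(X)$, and I would identify this spectrum with a derived mapping object $\operatorname{RHom}_{\mathscr{E}}(X,Y)$, whose homotopy groups are $\pi_{-n}=\operatorname{Ext}^n_{\mathscr{E}}(X,Y)$ for $n\ge 0$ and vanish in positive degrees. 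Full faithfulness then follows at once, because a functor of $\infty$-categories only sees mapping \emph{spaces}: the connective cover of $\operatorname{RHom}_{\mathscr{E}}(X,Y)$ is precisely $\operatorname{map}_{\mathscr{E}}(X,Y)$, the $\operatorname{Ext}^{>0}$-groups sitting in invisible negative degrees. The same computation yields $\pi_0\operatorname{map}_{\mathcal{H}^{\mathrm{st}}}(\iota Z,\Sigma\iota X)\cong\operatorname{Ext}^1_{\mathscr{E}}(Z,X)$, which is the key to both remaining clauses.

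For extension-closedness, a fiber-cofiber sequence $\iota X\to E\to\iota Z$ in $\mathcal{H}^{\mathrm{st}}(\mathscr{E})$ is classified by its connecting class in $\pi_0\operatorname{map}(\iota Z,\Sigma\iota X)\cong\operatorname{Ext}^1_{\mathscr{E}}(Z,X)$; invoking that in an exact $\infty$-category every such class is \emph{realized} by an actual conflation $X\infl Y\defl Z$, a comparison of the two fiber-cofiber sequences gives $E\simeq\iota Y$. Dually, a sequence in $\mathscr{E}$ whose image is a fiber-cofiber sequence is seen, after evaluating at representables and using faithfulness, to carry the correct connecting class, hence to be a conflation; and conversely every conflation maps to a fiber-cofiber sequence by construction. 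The main obstacle is the middle step: rigorously constructing the reflector $L$ and proving that $(Ly(Y))(X)$ computes $\operatorname{RHom}_{\mathscr{E}}(X,Y)$ with the stated homotopy groups. This is precisely where the full force of Barwick's exactness axioms enters -- it amounts to developing $\operatorname{Ext}$-theory for exact $\infty$-categories and proving that extension classes are represented by conflations -- and it is the direct analogue of the one genuinely hard point in the $1$-categorical Gabriel--Quillen theorem, namely that the category of left exact functors is abelian and receives $\mathscr{E}$ exactly.
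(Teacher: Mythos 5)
First, a point of comparison: the survey you are reading does not prove this theorem at all — it is quoted from Klemenc's paper on the stable hull — so your proposal has to be measured against that proof. Your overall strategy (a Gabriel--Quillen-style embedding into a localization of additive presheaves) is indeed the strategy of the cited proof, but the way you set it up creates a genuine gap exactly at the point you defer. Because you impose the \emph{spectrum-level} exactness condition, representables are not local (as you correctly observe), so the embedding must be $L\circ y$, and full faithfulness becomes the statement that the reflector $L$ modifies $y(Y)$ only in negative homotopy degrees. This is not a routine computation that can be postponed: in your formulation it \emph{is} the theorem, and your sketch offers no mechanism for proving it. Moreover, the computation you assert is false in the stated generality: for a genuine exact $\infty$-category (take $\mathscr{E}$ stable, where $\mathcal{H}^{\mathrm{st}}(\mathscr{E})\simeq\mathscr{E}$) the mapping spectra satisfy $\pi_n\operatorname{Map}(\iota X,\iota Y)\cong\pi_n\operatorname{Map}_{\mathscr{E}}(X,Y)$ for $n>0$, which need not vanish; note that this also contradicts your own (correct) statement that the connective cover is $\operatorname{Map}_{\mathscr{E}}(X,Y)$. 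Finally, your extension-closedness argument is circular: for an exact $\infty$-category there is no prior notion of $\operatorname{Ext}^1(Z,X)$ for which ``every class is realized by a conflation'' can be invoked — the only available definition is by equivalence classes of conflations, and the isomorphism $\pi_0\operatorname{Map}_{\mathcal{H}^{\mathrm{st}}}(\iota Z,\Sigma\iota X)\cong\{\text{conflations}\}/\!\simeq$ is precisely what has to be proved, not assumed.

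The cited proof avoids all of this by imposing exactness at the \emph{space} level first. One works with finite-product-preserving presheaves of spaces on $\mathscr{E}$ (automatically grouplike, hence valued in connective spectra, since $\mathscr{E}$ is additive) and with the condition that $F(C)\to F(B)\to F(A)$ be a fiber sequence of \emph{spaces}, equivalently descent for the topology generated by deflations. Representables \emph{do} satisfy this condition: since a conflation $A\infl B\defl C$ is a bicartesian square in $\mathscr{E}$, the equivalence $\operatorname{Map}_{\mathscr{E}}(C,X)\simeq\operatorname{fib}\bigl(\operatorname{Map}_{\mathscr{E}}(B,X)\to\operatorname{Map}_{\mathscr{E}}(A,X)\bigr)$ is just the universal property of the pushout $C\simeq B\amalg_A 0$; the spectrum-level failure you noticed (a potential $\pi_{-1}$ equal to the cokernel of $\operatorname{Hom}(B,X)\to\operatorname{Hom}(A,X)$) is exactly why the condition must not be imposed spectrum-wise at this stage. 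Consequently the Yoneda embedding already lands in the sheaves, and full faithfulness into the sheaf category is free. The real work then lies elsewhere: the sheafification is left exact, so the sheaf category is Grothendieck prestable and therefore embeds fully faithfully into its stabilization; $\mathcal{H}^{\mathrm{st}}(\mathscr{E})$ is taken to be the smallest stable subcategory of that stabilization containing the image of $\mathscr{E}$ (this smallness is also what makes the universal property stated after the theorem true). Extension-closure and the reflection of conflations are then proved by identifying $\pi_0$ of the relevant mapping objects with conflation classes — the very step your outline treats as a black box. In short: your plan is the right strategy in spirit, but the order in which you impose exactness front-loads the one step you cannot supply, and reversing that order (space-level sheaves first, then stabilize) is what makes the proof go through.
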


In fact, $\mathcal{H}^\text{st}$ defines a functor that is left adjoint to the inclusion functor of stable $\infty$-categories into exact $\infty$-categories, and the functor $\mathscr{E}\to\mathcal{H}^\text{st}(\mathscr{E})$ is the unit of the adjunction.
The stable hull enjoys some universal property: For any stable $\infty$-category $\mathscr{C}$, the unit induces an equivalence between the $\infty$-categories of exact functors $\operatorname{Fun}^\text{ex}(\mathcal{H}^\text{st}(\mathscr{E}),\mathscr{C})$ and $\operatorname{Fun}^\text{ex}(\mathscr{E},\mathscr{C})$.
 
\begin{corollary}
 Any topological extriangulated category is extension-closed in some triangulated category.
\end{corollary}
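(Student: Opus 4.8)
The plan is to deduce the statement directly from Klemenc's stable hull embedding by passing to homotopy categories. By definition, a topological extriangulated category $\mathscr{C}$ is extriangulated-equivalent to the homotopy category $\operatorname{Ho}(\mathscr{E})$ of a small exact $\infty$-category $\mathscr{E}$, carrying the natural extriangulated structure. Since the property of being extension-closed in a triangulated category is preserved under extriangulated equivalence, I may assume $\mathscr{C}=\operatorname{Ho}(\mathscr{E})$ from the outset.

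First I would apply \cref{thm:Klemenc} to obtain the stable $\infty$-category $\mathcal{H}^{\text{st}}(\mathscr{E})$ together with the fully faithful functor $\iota\colon\mathscr{E}\to\mathcal{H}^{\text{st}}(\mathscr{E})$ exhibiting $\mathscr{E}$ as a full, extension-closed subcategory, and recording that a diagram in $\mathscr{E}$ is a conflation precisely when $\iota$ sends it to a fiber-cofiber sequence. Next, set $\mathscr{T}=\operatorname{Ho}\bigl(\mathcal{H}^{\text{st}}(\mathscr{E})\bigr)$. The homotopy category of a stable $\infty$-category is triangulated, with shift induced by the suspension equivalence and distinguished triangles given by the images of the fiber-cofiber sequences; this is standard. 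Applying $\operatorname{Ho}(-)$ to $\iota$ yields a functor $\operatorname{Ho}(\iota)\colon\operatorname{Ho}(\mathscr{E})\to\mathscr{T}$, which is fully faithful because a fully faithful functor of $\infty$-categories induces isomorphisms on mapping spaces, hence on their sets of connected components, that is, on the Hom-sets of the homotopy categories.

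It then remains to identify the extriangulated structure. By \cref{sssection:ext-closed}, the essential image of $\operatorname{Ho}(\iota)$, being a full extension-closed subcategory of the triangulated category $\mathscr{T}$, inherits an extriangulated structure with $\mathbb{E}=\mathscr{T}(-,\Sigma-)$ and with conflations the triangles all of whose vertices lie in the image. I would check that this agrees, through $\operatorname{Ho}(\iota)$, with the natural extriangulated structure on $\operatorname{Ho}(\mathscr{E})$. The extension-closedness itself is immediate from \cref{thm:Klemenc}; the substantive point, and the main obstacle, is to match not merely the classes of conflations but the full extriangulated data. For this I would use the conflation/fiber-cofiber clause: conflations of $\mathscr{E}$ are exactly the diagrams whose images are fiber-cofiber sequences, and these are precisely the distinguished triangles of $\mathscr{T}$ supported on the image, while the connecting morphism of such a triangle recovers the $\mathbb{E}$-extension classifying the corresponding conflation. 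Verifying that the bifunctor $\mathbb{E}$ and the realization $\mathfrak{s}$ transport correctly—equivalently, that $\operatorname{Ho}(\iota)$ is an extriangulated functor inducing a natural isomorphism of $\mathbb{E}$-bifunctors—reduces to this identification of connecting maps with first extension groups, which is built into the compatibility assertions of \cref{thm:Klemenc} and into the triangulation of $\mathscr{T}$ described above.
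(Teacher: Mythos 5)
Your proof is correct and follows exactly the route the paper intends: the corollary is stated as an immediate consequence of \cref{thm:Klemenc}, obtained by passing from the stable hull embedding $\mathscr{E}\to\mathcal{H}^{\text{st}}(\mathscr{E})$ to homotopy categories, using that the homotopy category of a stable $\infty$-category is triangulated and that full faithfulness, extension-closedness, and the identification of conflations with fiber-cofiber sequences all descend. Your additional care in matching the full extriangulated data (not just the conflation classes) is exactly the compatibility that the cited references (\cite{NakaokaPalu2,Klemenc}) establish, so nothing is missing.
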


\begin{remark}
 In \cite{BorveTrygsland}, Erlend Due B{\o}rve and Paul Trygsland define an $\Omega$-spectrum of higher extensions for exact $\infty$-categories and show that it enhances higher extensions of extriangulated categories.
\end{remark}

\subsubsection{Algebraic extriangulated categories}

An algebraic version of \cref{thm:Klemenc} was proven in Xiaofa Chen's PhD thesis~\cite{Chen-PhD}.
He defines exact dg categories by interpreting the axioms of an exact category in the dg setting.
In particular, conflations do not compose to zero, but come equipped with a specified null-homotopy (see \cite[Definition 3.49 and Definition 5.1]{Chen-PhD} for more details).
Exact dg categories are algebraic analogues of exact $\infty$-categories and enhance extriangulated categories.

\begin{theorem}[{\cite[Theorem 6.19]{Chen-PhD}}]
Let $\mathscr{A}$ be an exact dg category.
Then its homotopy category $\operatorname{H}^0\!\mathscr{A}$ is canonically extriangulated.
\end{theorem}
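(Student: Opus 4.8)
The plan is to equip $\operatorname{H}^0\mathscr{A}$ with the bifunctor $\mathbb{E}(Z,X)=H^1\operatorname{Hom}_{\mathscr{A}}(Z,X)$, the first cohomology of the morphism complexes, and to realize each class by the conflations of the dg-exact structure. Two routes are available: a direct verification of (ET1)--(ET4)$^{\mathrm{op}}$, or an embedding of $\mathscr{A}$ into a pretriangulated dg category followed by an appeal to \cref{sssection:ext-closed}. Since the latter reuses the already-established fact that extension-closed full subcategories of triangulated categories are extriangulated, I would take the embedding route, which is also the dg analogue of the strategy behind \cref{thm:Klemenc}.

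First I would check that $\mathbb{E}$ descends to the homotopy category, which is the content of (ET1). Cohomology of the Hom-complexes is homotopy invariant: pre- or post-composition with a null-homotopic degree-$0$ map is itself chain-homotopic to zero and therefore acts as $0$ on $H^1$. This yields a well-defined biadditive functor $\mathbb{E}\colon(\operatorname{H}^0\mathscr{A})^{\mathrm{op}}\times\operatorname{H}^0\mathscr{A}\to\mathit{Ab}$.

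Next I would construct a pretriangulated dg hull $\widehat{\mathscr{A}}$ (for instance a suitable bounded dg derived category of $\mathscr{A}$), together with a fully faithful dg functor $\mathscr{A}\hookrightarrow\widehat{\mathscr{A}}$ under which the quasi-isomorphism of Hom-complexes gives $H^1\operatorname{Hom}_{\mathscr{A}}(Z,X)\cong\operatorname{H}^0\widehat{\mathscr{A}}(Z,\Sigma X)$. The homotopy category $\operatorname{H}^0\widehat{\mathscr{A}}$ is then triangulated, as $\operatorname{H}^0$ of any pretriangulated dg category is. The core of this step is to show that the essential image of $\operatorname{H}^0\mathscr{A}$ is a full, extension-closed subcategory of $\operatorname{H}^0\widehat{\mathscr{A}}$, and that a sequence $X\to Y\to Z$ in $\mathscr{A}$ is a conflation — with its specified null-homotopy exhibiting acyclicity — exactly when it extends to a triangle $X\to Y\to Z\to\Sigma X$ of $\operatorname{H}^0\widehat{\mathscr{A}}$ all of whose terms lie in the subcategory. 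Granting this, \cref{sssection:ext-closed} endows $\operatorname{H}^0\mathscr{A}$ with an extriangulated structure whose $\mathbb{E}$ and $\mathfrak{s}$ are exactly those described above, and canonicity follows from the universal property of the hull.

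The main obstacle is the coherent handling of the null-homotopy data that, in the dg setting, replaces the strict ``composes to zero'' condition of ordinary exact categories. Concretely, one must verify that the specified null-homotopy of each conflation produces a closed degree-$1$ morphism $Z\to X$ whose class $\delta\in\mathbb{E}(Z,X)$ is independent of all choices up to the equivalence of \cref{DefSqEquiv}, and that translating the dg-exactness axioms (stability of inflations under composition and cobase change, together with their duals) into triangle language yields precisely (ET4) and (ET4)$^{\mathrm{op}}$. This is where the bookkeeping is delicate: the octahedral data in $\operatorname{H}^0\widehat{\mathscr{A}}$ must be matched, term by term and homotopy by homotopy, with the dg pushout/pullback data of $\mathscr{A}$, so that the resulting diagram of conflations lands entirely within the extension-closed subcategory.
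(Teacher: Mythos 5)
Your choice of bifunctor is the fatal flaw, and it cannot be repaired by bookkeeping. You set $\mathbb{E}(Z,X)=H^1\operatorname{Hom}_{\mathscr{A}}(Z,X)$, but this group is not the extension group of the canonical structure — in the relevant cases it is identically zero. Take the simplest exact dg category: an ordinary exact category $\mathscr{E}$ regarded as a dg category concentrated in degree $0$, with its usual conflations (all null-homotopies being zero). Then $\operatorname{H}^0\!\mathscr{A}=\mathscr{E}$ and the canonical extriangulated structure must recover $\mathbb{E}=\operatorname{Ext}^1_{\mathscr{E}}$, which is generally nonzero; yet every Hom complex of $\mathscr{A}$ is concentrated in degree $0$, so $H^1\operatorname{Hom}_{\mathscr{A}}\equiv 0$ and your definition would force every conflation of $\mathscr{E}$ to split. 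More generally, whenever $\mathscr{A}$ is connective — which is the standing hypothesis of Chen's embedding theorem, the very result your argument leans on — the Hom complexes have no cohomology in positive degrees, so your bifunctor vanishes identically. The correct $\mathbb{E}(Z,X)$ is built from the conflations themselves (homotopy classes of conflations $X\to Y\to Z$ together with their coherence data), not from the Hom complexes of $\mathscr{A}$: this is precisely the phenomenon explained in \cref{subsection: motivation extricats}, where the extension $\delta$ lives in $\operatorname{Hom}(Z,\Sigma X)$ and $\Sigma X$ lies outside the subcategory, so extensions are invisible as morphisms inside it.

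The same misconception breaks your embedding step. The functor $\mathscr{A}\to\widehat{\mathscr{A}}=\mathcal{D}^b_{dg}(\mathscr{A})$ is \emph{not} a quasi-isomorphism on Hom complexes: as the paper quotes Chen's Theorem 6.1, it only induces a quasi-equivalence between $\tau_{\leq0}\mathscr{A}$ and $\tau_{\leq0}\mathscr{D}$ for an extension-closed dg subcategory $\mathscr{D}$, i.e.\ cohomology of Hom complexes is preserved only in degrees $\leq 0$. The group $\operatorname{H}^0\widehat{\mathscr{A}}(Z,\Sigma X)$ is genuinely new cohomology created in the hull out of the conflation data of $\mathscr{A}$, so your claimed isomorphism $H^1\operatorname{Hom}_{\mathscr{A}}(Z,X)\cong\operatorname{H}^0\widehat{\mathscr{A}}(Z,\Sigma X)$ is false (its left-hand side vanishes while its right-hand side does not). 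Relatedly, a conflation does not produce a ``closed degree-$1$ morphism $Z\to X$'' in $\mathscr{A}$: the specified null-homotopy is a degree $-1$ element $h\in\mathscr{A}(X,Z)^{-1}$ with $dh=p\circ i$, and the connecting class $\delta$ only comes into existence after passing to the hull. The embedding route itself is viable — it is how the paper relates exact dg categories to extension-closed subcategories of algebraic triangulated categories (Chen's Theorem 6.1 and Definition-Proposition 6.20), and it also requires a truncation step since Theorem 6.1 is stated for connective exact dg categories — but it takes the embedding theorem as a deep input, and the bifunctor must then be defined as $\operatorname{H}^0\widehat{\mathscr{A}}(-,\Sigma-)$ restricted to the image (equivalently, by equivalence classes of conflations), never as $H^1$ of $\mathscr{A}$'s own Hom complexes.
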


An extriangulated category which is equivalent to the homotopy category of an exact dg category is called an \defn{algebraic extriangulated category}.

\begin{proposition}[{\cite[Definition-Proposition 6.20]{Chen-PhD}}]
 For an extriangulated category $\mathscr{C}$, the following are equivalent:
 \begin{enumerate}[(i)]
  \item It is an algebraic extriangulated category.
  \item It is equivalent to  an extension-closed, full subcategory of an algebraic triangulated category.
  \item It is the quotient of an exact category by some class of projective-injective objects.
 \end{enumerate}
\end{proposition}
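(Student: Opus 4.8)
The plan is to establish the cycle of implications $(i)\Rightarrow(ii)\Rightarrow(iii)\Rightarrow(i)$. Throughout, I take ``algebraic triangulated category'' to mean a triangulated category that is triangle equivalent to $H^0$ of some pretriangulated dg category; by a classical result of Keller this is the same as being triangle equivalent to the stable category $\underline{\mathscr{E}}$ of a Frobenius exact category $\mathscr{E}$. In treating $(ii)$ I may assume, after replacing $\mathscr{C}$ by an extriangulated-equivalent category, that $\mathscr{C}$ is literally a strictly full, extension-closed subcategory of an algebraic triangulated category $\mathscr{T}$, with $\mathbb{E}$ and $\mathfrak{s}$ induced as in \cref{sssection:ext-closed}.

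For $(i)\Rightarrow(ii)$ I would invoke the algebraic Gabriel--Quillen embedding of Xiaofa Chen, the dg analogue of \cref{thm:Klemenc}: an exact dg category $\mathscr{A}$ embeds, via the bounded dg derived category construction of \cite{Chen-PhD}, into a pretriangulated dg category $\mathscr{D}^{\mathrm{b}}_{\mathrm{dg}}(\mathscr{A})$ as a full dg subcategory whose image is extension-closed and whose conflations are exactly the fibre--cofibre sequences landing in it. Applying $H^0$ turns this into a fully faithful exact functor $\mathscr{C}=H^0\mathscr{A}\to H^0\mathscr{D}^{\mathrm{b}}_{\mathrm{dg}}(\mathscr{A})$ whose target is algebraic triangulated and whose essential image is extension-closed, which is exactly $(ii)$.

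For $(ii)\Rightarrow(iii)$ I would use the Frobenius model $\mathscr{T}\simeq\underline{\mathscr{E}}$ together with the exact stabilisation functor $G\colon\mathscr{E}\to\underline{\mathscr{E}}=\mathscr{T}$, whose exactness is the content of \cref{prop:Frobenius}. Let $\mathcal{P}$ be the class of projective-injective objects of $\mathscr{E}$ and set $\tilde{\mathscr{C}}=G^{-1}(\mathscr{C})$, the full subcategory of objects sent into $\mathscr{C}$. Since $\mathscr{C}$ contains $0$ and $G$ kills $\mathcal{P}$, we have $\mathcal{P}\subseteq\tilde{\mathscr{C}}$; since $G$ sends conflations to triangles and $\mathscr{C}$ is extension-closed, $\tilde{\mathscr{C}}$ is extension-closed in $\mathscr{E}$, hence an exact category in which $\mathcal{P}$ is still projective-injective. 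Finally, for $X,Y\in\tilde{\mathscr{C}}$ the identity $\tilde{\mathscr{C}}(X,Y)/[\mathcal{P}]=\underline{\mathscr{E}}(X,Y)=\mathscr{T}(GX,GY)$ shows that the quotient construction of \cite[Proposition 3.30]{NakaokaPalu} produces an extriangulated equivalence $\tilde{\mathscr{C}}/[\mathcal{P}]\simeq\mathscr{C}$, exhibiting $\mathscr{C}$ as in $(iii)$.

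For $(iii)\Rightarrow(i)$, write $\mathscr{C}=\mathscr{E}/[\mathcal{Q}]$ with $\mathscr{E}$ exact and $\mathcal{Q}$ a class of projective-injectives. An ordinary exact category is the homotopy category of itself regarded as an exact dg category concentrated in degree $0$, so $\mathscr{E}$ is already algebraic; it then remains to check that forming the ideal quotient by a class of projective-injective objects can be carried out at the dg level without leaving the class of exact dg categories, which is part of Chen's analysis in \cite{Chen-PhD}. The resulting exact dg category has homotopy category $\mathscr{E}/[\mathcal{Q}]=\mathscr{C}$, giving $(i)$. The main obstacle lies entirely in the dg technology feeding the two ``algebraic'' directions: the algebraic Gabriel--Quillen embedding for $(i)\Rightarrow(ii)$ and the verification that the projective-injective quotient remains exact-dg for $(iii)\Rightarrow(i)$; by contrast the purely extriangulated step $(ii)\Rightarrow(iii)$ is comparatively soft, relying only on \cref{prop:Frobenius} and the Frobenius description of algebraic triangulated categories.
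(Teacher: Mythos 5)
First, a caveat on the comparison: the paper does not actually prove this proposition -- it is quoted from \cite{Chen-PhD}, and the only methodological indication given is that ``the first implication'' rests on \cite[Theorem 6.1]{Chen-PhD} (the dg analogue of \cref{thm:Klemenc}, stated immediately after the proposition). Your (i)$\Rightarrow$(ii) is exactly that, so on the one implication where the paper commits to an approach, yours agrees. Your (ii)$\Rightarrow$(iii) is the genuinely extriangulated part and is essentially sound: the preimage $\tilde{\mathscr{C}}=G^{-1}(\mathscr{C})$ is strictly full and extension-closed in $\mathscr{E}$, contains the projective-injectives $\mathcal{P}$, and $\mathcal{P}$ remains projective-injective in $\tilde{\mathscr{C}}$. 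One thing you should add there: the additive identification $\tilde{\mathscr{C}}/[\mathcal{P}](X,Y)\cong\mathscr{C}(X,Y)$ does not by itself give an \emph{extriangulated} equivalence. You also need that, for a Frobenius exact category, the connecting map $\operatorname{Ext}^1_{\mathscr{E}}(Z,X)\to\underline{\mathscr{E}}(Z,\Sigma X)$, $\delta\mapsto\underline{d}$, is a natural isomorphism carrying conflations to triangles -- this is the map $\Gamma$ from the proof of \cref{prop:Frobenius}, and the classical fact (going back to Happel) that it is bijective; restricting it to $\tilde{\mathscr{C}}$ is what matches the two $\mathbb{E}$-bifunctors and their realisations.

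The genuine gap is (iii)$\Rightarrow$(i). You defer precisely the nontrivial content -- that the ideal quotient by a class of projective-injectives can be performed at the dg level without leaving exact dg categories -- to ``Chen's analysis in \cite{Chen-PhD}''. Since the proposition under proof is itself Chen's, this is circular: that verification is exactly the step a blind proof has to supply, and nothing checkable is given. The missing argument is concrete: regard $\mathscr{E}$ as an exact dg category concentrated in degree $0$ and form the Drinfeld dg quotient $\mathscr{E}/\mathcal{Q}$. Because the adjoined contracting homotopies $h_Q$ sit in degree $-1$, the degree-zero cycles of the quotient are the original morphisms, while the new degree-zero boundaries are precisely the morphisms factoring through $\mathcal{Q}$ (as $d(g\,h_Q\,f)=gf$), so $H^0(\mathscr{E}/\mathcal{Q})\cong\mathscr{E}/[\mathcal{Q}]$; one must then check that the conflations of $\mathscr{E}$ induce an exact dg structure on $\mathscr{E}/\mathcal{Q}$. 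None of this appears in your write-up, so the cycle does not close as written. A second, smaller point: \cite[Theorem 6.1]{Chen-PhD}, as stated in the paper, requires $\mathscr{A}$ to be \emph{connective}, so in (i)$\Rightarrow$(ii) you should justify that assumption -- e.g.\ replace $\mathscr{A}$ by $\tau_{\leq 0}\mathscr{A}$, which changes neither $H^0$ nor the induced extriangulated structure.
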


The proof of the first implication is based on:
\begin{theorem}[{\cite[Theorem 6.1]{Chen-PhD}}]
 Let $\mathscr{A}$ be a connective exact dg category.
 Then there is a pretriangulated dg category $\mathcal{D}^b_{dg}(\mathscr{A})$ and an exact morphism $\mathscr{A}\to\mathcal{D}^b_{dg}(\mathscr{A})$ (in the homotopy category of the category of small dg categories for the Dwyer--Kan model structure) inducing a quasi-equivalence between $\tau_{\leq0}\mathscr{A}$ and $\tau_{\leq0}\mathscr{D}$, for some extension-closed dg subcategory $\mathscr{D}$ of $\mathcal{D}^b_{dg}(\mathscr{A})$.
\end{theorem}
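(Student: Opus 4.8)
The plan is to mimic, in the dg setting, the classical construction of the bounded derived category of an exact category together with the Gabriel--Quillen embedding, while keeping careful track of the fact that in an exact dg category a conflation does not compose to zero on the nose but only up to a \emph{specified} null-homotopy. First I would form the dg category $\mathcal{C}^b_{dg}(\mathscr{A})$ of bounded complexes over $\mathscr{A}$, with the usual Hom-complexes and differential; this is pretriangulated, being closed under shifts and cones. Inside it I would single out the full dg subcategory $\mathcal{N}$ of \emph{acyclic} complexes, where acyclicity is defined through the null-homotopies carried by the conflations of $\mathscr{A}$ (so that, in particular, the totalisation of a conflation is acyclic). The candidate is then the Drinfeld dg quotient
\[
\mathcal{D}^b_{dg}(\mathscr{A}) = \mathcal{C}^b_{dg}(\mathscr{A})/\mathcal{N}.
\]
Pretriangulatedness of $\mathcal{D}^b_{dg}(\mathscr{A})$ is then automatic, since a Drinfeld quotient of a pretriangulated dg category by a full dg subcategory is again pretriangulated.

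The exact morphism $\mathscr{A}\to\mathcal{D}^b_{dg}(\mathscr{A})$ sends an object $X$ to the complex concentrated in degree $0$, and on morphisms it is the composite of the obvious embedding $\mathscr{A}\to\mathcal{C}^b_{dg}(\mathscr{A})$ with the localisation dg functor; the fact that this lives only in the homotopy category of small dg categories for the Dwyer--Kan model structure reflects the need to replace the quotient by a cofibrant model. To check exactness I would verify that a conflation $X\infl Y\defl Z$ of $\mathscr{A}$, whose specified null-homotopy exhibits its totalisation as acyclic, becomes a bicartesian (fibre--cofibre) triangle in the quotient; equivalently, that the cone of $X\to Y$ becomes isomorphic to $Z$ after inverting acyclics.

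The heart of the matter, and the main obstacle, is the claimed quasi-equivalence on connective truncations. Concretely, I must compute the morphism complexes of the Drinfeld quotient and show that, for objects $X,Y$ of $\mathscr{A}$, the natural map
\[
\mathscr{A}(X,Y) \longrightarrow \mathcal{D}^b_{dg}(\mathscr{A})(X,Y)
\]
induces an isomorphism on $\operatorname{H}^n$ for every $n\leq 0$; the subcategory $\mathscr{D}$ is then taken to be the extension closure of the image. Here the connectivity hypothesis is essential: it guarantees that there are no spurious morphisms in negative degrees and that the genuinely new phenomena (the higher extensions) occur only in \emph{positive} cohomological degree, which is precisely what $\tau_{\leq 0}$ discards. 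Essential surjectivity of the truncated functor is built into the definition of $\mathscr{D}$, so the whole content is the connective quasi-full-faithfulness.

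I would establish this by giving an explicit cofibrant model for the Hom-complexes of the quotient — a zig-zag or ``roof'' description in the spirit of Drinfeld's construction — and then filtering complexes by width, reducing by dévissage to the case of a single conflation. The technical core is to prove that the specified null-homotopies produce exactly the expected homotopy classes and create no additional negative-degree cycles; this is a chain-level computation controlled by an induction on the length of the complexes, combined with the six-term exact sequences of \cref{prop:extricat long exact sequences} applied in the extriangulated homotopy category $\operatorname{H}^0\!\mathscr{A}$. The delicate point throughout is the coherent bookkeeping of the null-homotopies, which is exactly the extra data distinguishing exact dg categories from ordinary exact categories, and where I expect the bulk of the work to lie.
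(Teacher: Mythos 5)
The first thing to note is that the paper contains no proof of this statement: it is quoted verbatim from Chen's thesis, and the surrounding text only uses it as input for the implication (algebraic extriangulated) $\Rightarrow$ (extension-closed in an algebraic triangulated category). Your overall construction does match the one behind the cited theorem — the pretriangulated hull of $\mathscr{A}$ (which is what your ``$\mathcal{C}^b_{dg}(\mathscr{A})$'' should be: one-sided twisted complexes rather than naive complexes, since over a genuine dg category the differentials only commute up to the specified homotopies) Drinfeld-quotiented by the totalisations of conflations. However, there is a genuine gap in the reduction ``Essential surjectivity of the truncated functor is built into the definition of $\mathscr{D}$, so the whole content is the connective quasi-full-faithfulness.'' It is not built in. Any admissible $\mathscr{D}$ must be extension-closed and contain the image of $\mathscr{A}$, hence it contains the cone of every map $Z[-1]\to X$ in $H^0(\mathcal{D}^b_{dg}(\mathscr{A}))$ with $X,Z$ in the image; a quasi-equivalence $\tau_{\leq0}\mathscr{A}\to\tau_{\leq0}\mathscr{D}$ then forces every such extension to be $H^0$-isomorphic to an object coming from $\mathscr{A}$. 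This is the dg analogue of the classical fact that an exact category sits extension-closed inside its bounded derived category, and proving it requires identifying $H^1$ of the quotient Hom-complex $\mathcal{D}^b_{dg}(\mathscr{A})(Z,X)$ with the equivalence classes of conflations $X\infl Y\defl Z$ of $\mathscr{A}$, so that every extension triangle is realised by a conflation whose middle term is in the image. Your proposal explicitly places the positive cohomological degrees out of scope (``precisely what $\tau_{\leq 0}$ discards''), so this degree $+1$ computation — which is a substantive part of the theorem, and exactly the part that makes the intended application to extension-closed embeddings work — is missing, not merely deferred.

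A secondary problem is circularity: you propose to control the dévissage using the six-term exact sequences of \cref{prop:extricat long exact sequences} ``applied in the extriangulated homotopy category $\operatorname{H}^0\!\mathscr{A}$''. But in Chen's development the extriangulated structure on $\operatorname{H}^0\!\mathscr{A}$ (the paper's subsequently quoted Theorem, numbered 6.19 in the thesis, i.e.\ \emph{after} the embedding theorem 6.1) is obtained as a consequence of this very embedding, by restricting the triangulated structure of $H^0(\mathcal{D}^b_{dg}(\mathscr{A}))$ to the extension-closed image. So you may not assume $\operatorname{H}^0\!\mathscr{A}$ is extriangulated while proving the theorem; any exactness properties you need must be extracted directly from the dg conflation data (the specified null-homotopies and the homotopy pullback/pushout axioms of an exact dg category). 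With these two points repaired — the degree $+1$ identification of extensions with conflations, and a non-circular source for the long exact sequences — your plan would be a faithful reconstruction of the cited proof; as written, it falls short of the statement it claims to prove.
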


A consequence of the following result is that any algebraic extriangulated category is topological extriangulated:
\begin{theorem}[{\cite[Theorem 5.26]{Chen-PhD}}]
 Let $\mathscr{A}$ be an exact dg category.
 Then its dg nerve $N_{dg}(\mathscr{A})$ inherits the structure of an exact $\infty$-category.
\end{theorem}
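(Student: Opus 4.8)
The plan is to build the exact $\infty$-structure on $N_{dg}(\mathscr{A})$ directly from the dg data, declaring the ingressive (respectively egressive) $1$-simplices to be those homotopy equivalent to inflations (respectively deflations) of $\mathscr{A}$. First I would recall the shape of the dg nerve: its objects are the objects of $\mathscr{A}$, its $1$-simplices are the closed degree-$0$ morphisms, and a $2$-simplex on $X_0,X_1,X_2$ is a triple of composable closed morphisms $f_{01},f_{12},f_{02}$ together with a degree $-1$ morphism $f_{012}$ satisfying $d f_{012}=f_{12}f_{01}-f_{02}$; thus a $2$-simplex is exactly a choice of null-homotopy witnessing $f_{02}\simeq f_{12}f_{01}$. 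Since $N_{dg}(\mathscr{A})$ is already an $\infty$-category, and since $\mathscr{A}$ is additive with a zero object and biproducts, a preliminary step is to check that $N_{dg}(\mathscr{A})$ is an additive $\infty$-category, its equivalences being the homotopy equivalences of $\mathscr{A}$. The two marked classes then visibly contain all equivalences and, by the composition-stability of inflations and of deflations in an exact dg category, are closed under composition.

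The conceptual heart is the translation of a conflation into a bicartesian square. Given a conflation $X\overset{i}{\infl}Y\overset{p}{\defl}Z$, its specified null-homotopy $h$ (of degree $-1$, with $dh=p\circ i$) is precisely the higher datum needed to assemble a square $\Delta^1\times\Delta^1\to N_{dg}(\mathscr{A})$ with corners $X,Y,0,Z$ in which the composite $X\to 0\to Z$ is identified with $p\circ i$ through $h$. I would then show this square is both cocartesian and cartesian. The mapping spaces of $N_{dg}(\mathscr{A})$ are recovered from the mapping complexes of $\mathscr{A}$ via Dold--Kan, so such a square is a pushout exactly when, for every test object $W$, the induced sequence of mapping complexes is a homotopy fibre sequence; this is nothing but the six-term long exact sequence attached to a conflation, which is part of the exact dg axioms. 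Running the dual argument exhibits the same square as a pullback, so conflations become fibre-cofibre sequences.

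With this in hand the remaining Barwick axioms follow. The split conflations $0\infl X\overset{=}{\defl}X$ and $X\overset{=}{\infl}X\defl 0$ show that every morphism out of, or into, a zero object is ingressive, respectively egressive. The crucial closure axiom, that the pushout of an ingressive along an arbitrary morphism exists and is again ingressive (and dually for egressives), is the $\infty$-categorical shadow of the exact dg analogue of the axiom that admissible monomorphisms are stable under pushout: one forms the relevant dg pushout conflation and invokes the previous paragraph to see that its image in the nerve is the required cocartesian square with ingressive legs. The exactness axiom, that an ambigressive square is cocartesian if and only if it is cartesian, is then the symmetric statement established for conflations, propagated from the basic squares to general ones by the pushout axiom.

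The main obstacle I anticipate is not conceptual but one of coherent bookkeeping: passing from the strict, chain-level homotopy (co)cartesian squares of $\mathscr{A}$ to genuine $\infty$-categorical (co)limit cones in $N_{dg}(\mathscr{A})$ requires producing, and verifying the $d$-equations for, the full tower of higher simplices $f_{i_0<\cdots<i_m}$ that exhibit a colimit cocone rather than a mere objectwise equivalence on homotopy categories. Checking that the null-homotopies supplied by the exact dg structure assemble into such coherent fillers, and that the Dold--Kan comparison between mapping complexes and mapping spaces is natural enough to transport the fibre-sequence characterisation of (co)limits, is where the real work lies.
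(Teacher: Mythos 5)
The paper offers no proof of this statement: it is quoted verbatim from Chen's thesis (\cite[Theorem 5.26]{Chen-PhD}), so your attempt can only be measured against Chen's original argument, not against anything in the survey. Your overall architecture does match his: mark as ingressive (resp.\ egressive) the $1$-simplices homotopic to inflations (resp.\ deflations), convert each conflation with its null-homotopy into a square on $X,Y,0,Z$ in $N_{dg}(\mathscr{A})$, and verify Barwick's axioms by testing (co)cartesianness on mapping spaces.

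There are, however, two genuine problems. First, your justification of the bicartesian step invokes an axiom that does not exist: in Chen's framework (Definitions 3.49 and 5.1 of the thesis, as recalled in the survey), a conflation of an exact dg category \emph{is} a homotopy short exact sequence, meaning the associated square is homotopy bicartesian at the chain level by definition; the axioms of an exact dg structure concern closure properties (existence of split conflations, composability of inflations and deflations, existence of homotopy pushouts of inflations along arbitrary morphisms, and the dual), not ``six-term long exact sequences''. This misattribution matters beyond bookkeeping, since a long exact sequence of cohomology groups alone is strictly weaker than the homotopy (co)cartesian property you need --- though, ironically, the correct definition hands you that property for free. Second, the step you defer to your final paragraph as ``where the real work lies'' is the actual content of the theorem, and you mislocate the difficulty: one does not need to build a tower of coherent higher fillers, because a square $\Delta^1\times\Delta^1\to N_{dg}(\mathscr{A})$ requires only two $2$-simplices (supplied by the specified null-homotopy and the zero homotopy), and the universal property is then checked on mapping spaces, not by exhibiting fillers. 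What is genuinely needed is the identification $\mathrm{Map}_{N_{dg}(\mathscr{A})}(W,-)\simeq \mathrm{DK}\bigl(\tau_{\leq 0}\mathrm{Hom}_{\mathscr{A}}(W,-)\bigr)$, the fact that $\tau_{\leq 0}$ preserves homotopy pullbacks (it is a right adjoint to the inclusion of connective complexes), the analogous preservation for Dold--Kan and the forgetful functor to spaces, and pasting arguments reducing general ambigressive squares to conflation squares in order to obtain Barwick's ``ambigressive pushout equals ambigressive pullback'' axiom. Since none of this is carried out, what you have is a plausible plan with a misidentified key input, not a proof.
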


%%%%%%%%%%%
\subsection{Localisation of extriangulated categories}
%%%%%%%%%%%

A theory of localisation for extriangulated categories is developed in~\cite{NakaokaOgawaSakai}.
Given an extriangulated category $(\mathscr{C},\mathbb{E},\mathfrak{s})$, and a class $\mathcal{W}$ of morphisms in $\mathscr{C}$ to be inverted, Theorem~3.5 in \cite{NakaokaOgawaSakai} gives explicit conditions so that the localisation $\mathscr{C}[\mathcal{W}^{-1}]$ inherits an extriangulated structure from $\mathscr{C}$.
Its extriangles are certain equivalence classes of diagrams of the form
\[
A\xrightarrow{w} X \overset{i}{\infl} Y \overset{p}{\defl} Z \xleftarrow{w'} C \overset
{\delta}{\dashrightarrow},
\]
where $w,w'\in\mathcal{W}$, and $X \overset{i}{\infl} Y \overset{p}{\defl} Z \overset
{\delta}{\dashrightarrow}$ is an extriangle in $\mathscr{C}$.

The following specific case applies to many localisations arising in practice.
\begin{definition}
 A full subcategory $\mathcal{N}$ of $\mathscr{C}$ is \defn{thick} if it is closed under summands and isomorphisms and if, for any conflation $X\infl Y\defl Z$ with any two of $X,Y,Z$ in $\mathcal{N}$, the third also belongs to $\mathcal{N}$.
 A thick subcategory $\mathcal{N}$ is \defn{biresolving} if every object $X\in\mathscr{C}$ admits a deflation $N\defl X$ and an inflation $X\infl N'$ with $N,N'\in\mathcal{N}$.
 A thick subcategory is \defn{percolating} if any morphism that factors through $\mathcal{N}$ can be factored as a deflation with codomain in $\mathcal{N}$ followed by an inflation.
\end{definition}

Fix a thick subcategory $\mathcal{N}$ of $\mathscr{C}$ and define $\mathcal{W}_{\mathcal{N}}$ to be the class of morphisms in $\mathscr{C}$ that are finite compositions of inflations with cones in $\mathcal{N}$ and deflations with cocones in $\mathcal{N}$.

\begin{theorem}[{\cite[Corollary 4.27]{NakaokaOgawaSakai}}]
 If the thick subcategory is biresolving, then the localisation $\mathscr{C}[\mathcal{W_{\mathcal{N}}}^{-1}]$ is triangulated.
\end{theorem}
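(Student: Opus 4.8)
The plan is to reduce the statement to the converse characterisation of triangulated categories recalled in \cref{subsubsection:tricats}: an extriangulated category whose extension bifunctor is isomorphic to $\mathrm{Hom}(-,\Sigma-)$ for some autoequivalence $\Sigma$ is automatically triangulated, with triangles the extriangles. The first task is therefore to know that $\mathscr{C}[\mathcal{W}_{\mathcal{N}}^{-1}]$ is extriangulated at all. For this I would apply the general localisation theory of \cite{NakaokaOgawaSakai} recalled above, which reduces the problem to checking that $\mathcal{W}_{\mathcal{N}}$ is a multiplicative system compatible with the extriangulation in the sense of \cite[Theorem 3.5]{NakaokaOgawaSakai}. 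This is the step where the biresolving hypothesis is genuinely used: the required $2$-out-of-$3$ and saturation properties, and the compatibility of $\mathcal{W}_{\mathcal{N}}$ with inflations, deflations and $\mathbb{E}$-extensions, would be obtained from the homotopy pushouts and pullbacks of \cref{prop:htpPO} together with the existence, for every object, of a deflation from $\mathcal{N}$ and an inflation into $\mathcal{N}$. I write $(\mathscr{C}[\mathcal{W}_{\mathcal{N}}^{-1}],\overline{\mathbb{E}},\overline{\mathfrak{s}})$ for the resulting extriangulated category and $Q$ for the (exact) localisation functor.

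Next I would observe that $Q$ kills $\mathcal{N}$. For $N\in\mathcal{N}$ the split conflation $0\infl N\defl N$ exhibits $0\to N$ as an inflation with cone $N\in\mathcal{N}$, so $0\to N$ lies in $\mathcal{W}_{\mathcal{N}}$ and $QN\cong 0$; in particular $\overline{\mathbb{E}}(-,QN)=0=\overline{\mathbb{E}}(QN,-)$. The shift is then built exactly as in the Frobenius case (\cref{prop:Frobenius}): for each $X$ fix, by the biresolving property, an inflation $X\infl N_X$ with $N_X\in\mathcal{N}$ and let $\Sigma X$ be its cone, giving an extriangle $X\infl N_X\defl\Sigma X\overset{\delta_X}{\dashrightarrow}$. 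Functoriality of $\Sigma$ on $\mathscr{C}[\mathcal{W}_{\mathcal{N}}^{-1}]$ and independence from the choices made would follow from the long exact sequences of \cref{prop:extricat long exact sequences} and the vanishing $QN_X\cong 0$. Dually, deflations $N^X\defl X$ with cocone in $\mathcal{N}$ yield a syzygy functor $\Omega$, and the usual syzygy--cosyzygy argument shows $\Omega$ is quasi-inverse to $\Sigma$, so $\Sigma$ is an autoequivalence.

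It then remains to identify the extension bifunctor. Feeding the extriangle $X\infl N_X\defl\Sigma X\overset{\delta_X}{\dashrightarrow}$ and a test object $W$ into the second exact sequence of \cref{prop:extricat long exact sequences}, the vanishing of both $\mathscr{C}[\mathcal{W}_{\mathcal{N}}^{-1}](W,QN_X)$ and $\overline{\mathbb{E}}(W,QN_X)$ forces the connecting homomorphism $(\delta_X)_\sharp\colon\mathscr{C}[\mathcal{W}_{\mathcal{N}}^{-1}](W,\Sigma X)\to\overline{\mathbb{E}}(W,X)$ to be an isomorphism, natural in $W$ and in $X$. This exhibits a natural isomorphism $\overline{\mathbb{E}}\cong\mathscr{C}[\mathcal{W}_{\mathcal{N}}^{-1}](-,\Sigma-)$, and the characterisation recalled in \cref{subsubsection:tricats} then shows that $\mathscr{C}[\mathcal{W}_{\mathcal{N}}^{-1}]$ is triangulated.

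I expect the main obstacle to be the very first step. Once the localisation is known to be extriangulated and $\mathcal{N}$ is seen to be killed, the remainder follows the now-standard Frobenius/Happel template and is essentially formal. By contrast, verifying that $\mathcal{W}_{\mathcal{N}}$ satisfies the compatibility axioms of \cite[Theorem 3.5]{NakaokaOgawaSakai} requires a delicate analysis of how finite composites of inflations-with-cone-in-$\mathcal{N}$ and deflations-with-cocone-in-$\mathcal{N}$ interact with arbitrary conflations; this is where all the force of the biresolving hypothesis is spent.
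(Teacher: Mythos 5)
The paper itself contains no proof of this statement: it is quoted directly from \cite[Corollary 4.27]{NakaokaOgawaSakai}, so your attempt can only be measured against the argument in that source. Your architecture is essentially the same as the one used there: invoke \cite[Theorem 3.5]{NakaokaOgawaSakai} (recalled in the paper just before the statement) to obtain an extriangulated structure $(\mathscr{C}[\mathcal{W}_{\mathcal{N}}^{-1}],\overline{\mathbb{E}},\overline{\mathfrak{s}})$ with exact localisation functor $Q$; observe that $Q$ annihilates $\mathcal{N}$ (your split-conflation argument for $0\to N\in\mathcal{W}_{\mathcal{N}}$ is correct); use the biresolving property and exactness of $Q$ to produce, for every $X$, extriangles $QX\infl 0\defl Q\Sigma X\overset{}{\dashrightarrow}$ and $Q\Omega X\infl 0\defl QX\overset{}{\dashrightarrow}$ in the localisation; conclude from the long exact sequences of \cref{prop:extricat long exact sequences} that $Q\Sigma X$ represents $\overline{\mathbb{E}}(-,QX)$, that $\Sigma$ and $\Omega$ are quasi-inverse autoequivalences, and finish with the characterisation recalled in \cref{subsubsection:tricats}, i.e.\ \cite[Proposition 3.22]{NakaokaPalu}. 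This endgame is sound (equivalently, one could phrase it through \cref{prop:Frobenius}: the localisation is Frobenius with only zero projective-injective objects, hence equal to its own triangulated stable category).

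Two caveats. First, the one substantive assertion you leave unproved is the one you flag yourself: that biresolving implies the hypotheses of \cite[Theorem 3.5]{NakaokaOgawaSakai} for $\mathcal{W}_{\mathcal{N}}$. This is genuinely nontrivial --- it is a sizeable portion of Section 4 of that paper --- but deferring to it is legitimate here, since the statement under review is derived from that theorem in the source; just be aware that your proposal is an outline of the reduction, not a self-contained proof. Second, two smaller points: (i) your syzygy construction should read ``deflations $N^X\defl X$ with $N^X\in\mathcal{N}$'', not ``with cocone in $\mathcal{N}$'' --- the cocone of such a deflation is $\Omega X$, which in general does not lie in $\mathcal{N}$, and requiring it to lie in $\mathcal{N}$ would instead put the deflation in $\mathcal{W}_{\mathcal{N}}$ and force $QX\cong QN^X$, which is not what you want; (ii) the biresolving hypothesis is not spent solely on the first step, since it is also exactly what furnishes the inflations $X\infl N_X$ and deflations $N^X\defl X$ from which $\Sigma$ and $\Omega$ are built, as your own construction shows.
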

This result generalises Verdier quotients for triangulated categories and \cite[Theorem 5]{Rump} for exact categories.
A version of this result at the level of exact $\infty$-categories follows from the work in progress~\cite{JassoKvammePaluWalde}, mentionned in \cref{rk:JKPW}.

\begin{theorem}[{\cite[Corollary 4.27]{NakaokaOgawaSakai}}]
\label{thm:percolating}
 If the thick subcategory is percolating (and under some mild condition which is always satisfied if $\mathscr{C}$ is weakly idempotent complete, or if it is exact or triangulated), then the localisation $\mathscr{C}[\mathcal{W_{\mathcal{N}}}^{-1}]$ is extriangulated.
\end{theorem}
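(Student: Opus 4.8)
The plan is to deduce the statement from the general localisation machinery of~\cite{NakaokaOgawaSakai} (their Theorem~3.5, recalled in prose just before the definitions of biresolving and percolating): whenever a class of morphisms $\mathcal{W}$ is a multiplicative system compatible with $(\mathbb{E},\mathfrak{s})$, the localisation $\mathscr{C}[\mathcal{W}^{-1}]$ carries a canonical extriangulated structure whose extriangles are the equivalence classes of zigzags $A\xrightarrow{w}X\infl Y\defl Z\xleftarrow{w'}C\dashrightarrow$ with $w,w'\in\mathcal{W}$. Thus the entire proof reduces to verifying that $\mathcal{W}_{\mathcal{N}}$ satisfies the hypotheses of that theorem; the real content of \cref{thm:percolating} is that the percolating assumption is exactly what makes these hypotheses hold.

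First I would dispatch the purely multiplicative axioms. That $\mathcal{W}_{\mathcal{N}}$ contains all isomorphisms and is closed under composition is immediate from its definition, since an isomorphism is an inflation with zero cone and $0\in\mathcal{N}$ by thickness. The $2$-out-of-$3$ property is more delicate: given two of $f,g,gf$ in $\mathcal{W}_{\mathcal{N}}$ one must exhibit a zigzag witnessing the third, and here I would use the nine lemma (\cref{prop:9lemma}) and its dual together with the thickness of $\mathcal{N}$ to track that the cones and cocones arising stay in $\mathcal{N}$. Since every element of $\mathcal{W}_{\mathcal{N}}$ is by construction a finite alternating composition of inflations-with-cone-in-$\mathcal{N}$ and deflations-with-cocone-in-$\mathcal{N}$, the crux is a straightening lemma: any such composition should be rewritable as a single deflation-with-cocone-in-$\mathcal{N}$ followed by a single inflation-with-cone-in-$\mathcal{N}$. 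This normal form is precisely what the percolating hypothesis supplies, namely that a morphism factoring through $\mathcal{N}$ factors as a deflation into $\mathcal{N}$ followed by an inflation; iterating this collapses an arbitrary zigzag.

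Next I would establish the Ore (calculus-of-fractions) conditions, the heart of the matter. Given a cospan $Y\xrightarrow{f}W\xleftarrow{s}X$ with $s\in\mathcal{W}_{\mathcal{N}}$, I would complete it to a commuting square whose remaining leg again lies in $\mathcal{W}_{\mathcal{N}}$, using the base- and cobase-change operations available in any extriangulated category (via \cref{prop:htpPO} and its dual, together with (ET4) and (ET4)$^{\mathrm{op}}$). The key structural point is that these operations preserve the relevant end of a conflation: cobase change of an inflation preserves its cone, and base change of a deflation preserves its cocone, so transporting a generator of $\mathcal{W}_{\mathcal{N}}$ keeps it in $\mathcal{W}_{\mathcal{N}}$. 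The percolating factorisation is invoked again for the mixed case and, crucially, for the cancellation clause (if $fs\in\mathcal{W}_{\mathcal{N}}$ with $s\in\mathcal{W}_{\mathcal{N}}$ then $f\in\mathcal{W}_{\mathcal{N}}$), where one compares two factorisations through $\mathcal{N}$ and extracts a conflation to which $2$-out-of-$3$ applies. The ``mild condition'' in the statement enters exactly here: at the point where a section or retraction produced during this comparison must split off a summand, one needs the relevant saturation, which is automatic once $\mathscr{C}$ is weakly idempotent complete (the hypothesis already imposed in \cref{prop:9lemma}) and is what the parenthetical records.

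Finally, with $\mathcal{W}_{\mathcal{N}}$ shown to be a compatible multiplicative system, I would invoke the general theorem to transport $\mathbb{E}$ and $\mathfrak{s}$. The localised bifunctor is defined on fractions, and its well-definedness follows from \cref{prop:extricat long exact sequences} applied to the conflations defining the generators of $\mathcal{W}_{\mathcal{N}}$, which shows that morphisms in $\mathcal{W}_{\mathcal{N}}$ induce the required isomorphisms on $\mathbb{E}$ after passage to the quotient; the axioms (ET1) through (ET4)$^{\mathrm{op}}$ then hold on $\mathscr{C}[\mathcal{W}_{\mathcal{N}}^{-1}]$ by the theorem. I expect the main obstacle to be the Ore and cancellation step of the previous paragraph: verifying the fraction calculus is where the percolating hypothesis is genuinely indispensable, whereas the transport of the extriangulated axioms afterwards is essentially formal once the general machinery applies.
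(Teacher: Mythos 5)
This theorem is quoted in the survey from \cite[Corollary 4.27]{NakaokaOgawaSakai} without proof, so your attempt must be measured against the argument given there. Your overall architecture is the right one, and is indeed how the cited corollary is obtained: one checks that $\mathcal{W}_{\mathcal{N}}$ satisfies the hypotheses of the general localisation theorem (Theorem~3.5 of \cite{NakaokaOgawaSakai}), with the percolating hypothesis carrying the load and weak idempotent completeness (or exactness, or triangulatedness) supplying the parenthetical mild condition. Your sketch of the mixed Ore case is also sound in spirit: for a cospan $Y\xrightarrow{f}W\xleftarrow{s}X$ with $s$ an inflation having cone $N\in\mathcal{N}$, the composite of $f$ with the deflation $W\defl N$ factors through $\mathcal{N}$ (its codomain lies there), percolation then yields a deflation $q\colon Y\defl N'$ with $N'\in\mathcal{N}$, the cocone inflation of $q$ lies in $\mathcal{W}_{\mathcal{N}}$, and the exact sequences of \cref{prop:extricat long exact sequences} produce the desired commuting square.

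There is, however, a genuine gap at the step you yourself call the crux. You claim that the normal form---every element of $\mathcal{W}_{\mathcal{N}}$ is a deflation with cocone in $\mathcal{N}$ followed by an inflation with cone in $\mathcal{N}$---``is precisely what the percolating hypothesis supplies,'' and that iterating it collapses arbitrary compositions. This misreads the definition in two ways. First, the percolating condition applies only to morphisms that factor through an object of $\mathcal{N}$; a general element of $\mathcal{W}_{\mathcal{N}}$ (say an $\mathcal{N}$-inflation followed by an $\mathcal{N}$-deflation) is an ``isomorphism modulo $\mathcal{N}$,'' not a morphism factoring through $\mathcal{N}$, so the hypothesis cannot even be applied to it, let alone iterated. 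Second, the factorisation percolation provides is of a different shape from the one you need: it produces a deflation whose \emph{codomain} is in $\mathcal{N}$ followed by an inflation whose \emph{domain} is in $\mathcal{N}$, and neither of these morphisms belongs to $\mathcal{W}_{\mathcal{N}}$ in general, whereas your normal form requires a deflation with \emph{cocone} in $\mathcal{N}$ and an inflation with \emph{cone} in $\mathcal{N}$. The actual role of percolation is to control the ideal $[\mathcal{N}]$ of morphisms factoring through $\mathcal{N}$ (which become zero in the localisation, since the objects of $\mathcal{N}$ do); any rewriting of compositions in $\mathcal{W}_{\mathcal{N}}$, as well as the two-out-of-three and cancellation properties, requires separate arguments combining this with \cref{prop:htpPO}, its dual and (ET4). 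Since your two-out-of-three, straightening and cancellation steps all lean on the misapplied mechanism (and, in passing, morphisms of $\mathcal{W}_{\mathcal{N}}$ do \emph{not} induce isomorphisms on $\mathbb{E}$, so the well-definedness of the localised bifunctor is not the formality you describe), the proposal as written does not close.
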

Moreover, when $\mathcal{N}$ is percolating, \cite[Corollary 4.42]{NakaokaOgawaSakai} gives sufficient condition for the localisation to be exact or even abelian.
\cref{thm:percolating} thus generalises Serre quotients for abelian categories and \cite[Theorem 8.1]{HenrardVanRoosmalen-localisation}, attributed to Manuel Cardenas.

%%%%%%%%%%%%%%%%%%%%%%%%%%%%%%%%%%%%%%%%%%%%%%%%%%%%%%%%%%
\section{Hovey's correspondence}
%%%%%%%%%%%%%%%%%%%%%%%%%%%%%%%%%%%%%%%%%%%%%%%%%%%%%%%%%%
\label{section:Hovey}

Hovey's correspondence~\cite{Hovey-CPmodelRT,Hovey-CPmodel} is a device for constructing model category structures on abelian categories.
It is inspired by the somewhat canonical model category structure on a Frobenius category, but with two cotorsion pairs mimicking the role played by the projectives and the injectives.
A similar, but weaker correspondence, due to Apostolos Beligiannis and Idun Reiten and involving only one cotorsion pair appeared independently in~\cite[Theorem 4.2]{BeligiannisReiten} (we note however, that their Theorem~3.5 suggests that some stronger statement should hold).
Hovey's correspondence was generalised to exact categories by James Gillespie~\cite{Gillespie-exact,Gillespie-HoveyTriple,Gillespie-Hereditary}.
He also discovered that this correspondence was a powerful tool for constructing model structures on categories of complexes, with the derived category as its homotopy category.
The strategy of passing from a cotorsion pair on an exact category to a model structure on the category of complexes has been considered, and beautifully generalised, by Henrik Holm and Peter J{\o}rgensen~\cite{HolmJorgensen-Model}. In that article, the category of complexes is thought of as a category of representations of the $A^\infty_\infty$ quiver with radical square zero relations. This point of view allow them to generalise Gillespie's result to categories of representations of self-injective bound quivers, thus including categories of $n$-term complexes, of periodic complexes and many more.
We also note the use of Gillespie's result in order to define the stable category of an arbitrary ring in \cite{BravoGillespieHovey-StableModule}

Hovey's correpondence has also been generalised in a different direction: the proof generalises to triangulated categories, with short exact sequences being replaced by triangles \cite{Yang-TriangulatedHovey}.
Not so surprisingly, it also generalises to extriangulated categories.

\subsection{Exact model structures}

A model structure is equivalently described as two ``intertwined'' weak factorisation systems.

\begin{definition}
 A \defn{weak factorisation system} on a category $\mathscr{C}$ is a pair $(\mathcal{L},\mathcal{R})$ of classes of morphisms in $\mathscr{C}$ satisfying:
 \begin{enumerate}
  \item The classes $\mathcal{L}$ and $\mathcal{R}$ are stable under retracts;
  \item Any morphism $f \in \mathscr{C}$ factorises as $f=r\circ l$, for some $r\in\mathcal{R}$ and $l\in\mathcal{L}$;
  \item The classes $\mathcal{L}$ and $\mathcal{R}$ are weakly orthogonal in the sense that any commutative square
% https://q.uiver.app/#q=WzAsNCxbMCwwLCJBIl0sWzAsMSwiQiJdLFsxLDAsIlgiXSxbMSwxLCJZIl0sWzAsMSwibCIsMl0sWzAsMiwiXFxhbHBoYSJdLFsyLDMsInIiXSxbMSwzLCJcXGJldGEiLDJdLFsxLDIsImgiLDAseyJzdHlsZSI6eyJib2R5Ijp7Im5hbWUiOiJkYXNoZWQifX19XV0=
\[\begin{tikzcd}
	A & X \\
	B & Y
	\arrow["l"', from=1-1, to=2-1]
	\arrow["\alpha", from=1-1, to=1-2]
	\arrow["r", from=1-2, to=2-2]
	\arrow["\beta"', from=2-1, to=2-2]
	\arrow["h", dashed, from=2-1, to=1-2]
\end{tikzcd}\]
with $l\in\mathcal{L}$ and $r\in\mathcal{R}$, admits a lifting $h$ making both triangles commute.
 \end{enumerate}
\end{definition}

\begin{notation}
 We write $l\,\square\, r$ if, for any morphisms $\alpha, \beta$ such that the square above commutes, there is a lifting $h$. We also write $\mathcal{L}\,\square\, \mathcal{R}$ when $l\,\square\, r$ for any $(l,r)\in\mathcal{L}\times\mathcal{R}$.
\end{notation}

The following lemma shows a strong relation between lifting properties and $\operatorname{Ext}^1$-vanishing conditions (see~\cite[Lemma 5.6]{Stovicek-ExactModel} for a converse, and more precise, statement).
\begin{lemma}
 Let $X\stackrel{i}{\infl} Y\defl Z$, $A\infl B\stackrel{p}{\defl} C$ be two conflations in an extriangulated category.
 If $\mathbb{E}(Z,A)=0$, then $i$ has the left lifting property with respect to $p$.
\end{lemma}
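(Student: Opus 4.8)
The plan is to run the six-term exact sequences of \cref{prop:extricat long exact sequences} twice, splitting the verification of the lifting property into a \emph{lift} step and a \emph{correction} step. Write the two conflations as extriangles $X \overset{i}{\infl} Y \overset{q}{\defl} Z \overset{\delta}{\dashrightarrow}$ and $A \overset{j}{\infl} B \overset{p}{\defl} C \overset{\zeta}{\dashrightarrow}$, and suppose given a commutative square with $\alpha \in \mathscr{C}(X,B)$, $\beta \in \mathscr{C}(Y,C)$ and $p\alpha = \beta i$. I must produce $h \in \mathscr{C}(Y,B)$ with $hi = \alpha$ and $ph = \beta$. The hypothesis $\mathbb{E}(Z,A)=0$ will enter in exactly two places, once in each step.

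First I would lift $\beta$ along $p$. By the covariant long exact sequence applied to $A \overset{j}{\infl} B \overset{p}{\defl} C \overset{\zeta}{\dashrightarrow}$ and evaluated at $Y$, such a lift $\tilde h \in \mathscr{C}(Y,B)$ with $p\tilde h = \beta$ exists if and only if $\beta^\ast\zeta = 0$ in $\mathbb{E}(Y,A)$. To see this vanishing I pull back along $i$: since $\beta i = p\alpha$, I get $i^\ast\beta^\ast\zeta = \alpha^\ast(p^\ast\zeta)$, and $p^\ast\zeta = 0$ because an extension is annihilated by its own deflation (the image of $\zeta^\sharp$ is the kernel of $p^\ast$, and $\zeta = (1_A)_\ast\zeta$ lies in that image). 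Thus $i^\ast\beta^\ast\zeta = 0$, and the exact sequence $\mathbb{E}(Z,A)\overset{q^\ast}{\to}\mathbb{E}(Y,A)\overset{i^\ast}{\to}\mathbb{E}(X,A)$ coming from the first conflation identifies $\beta^\ast\zeta$ with an element of the image of $q^\ast$; since $\mathbb{E}(Z,A)=0$, this forces $\beta^\ast\zeta = 0$.

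It remains to correct $\tilde h$ on $X$ without destroying $p\tilde h = \beta$. The difference $\tilde h i - \alpha$ satisfies $p(\tilde h i - \alpha) = \beta i - \beta i = 0$, so by exactness of $\mathscr{C}(X,A)\overset{j\circ-}{\to}\mathscr{C}(X,B)\overset{p\circ-}{\to}\mathscr{C}(X,C)$ it equals $ja$ for some $a \in \mathscr{C}(X,A)$. I then want to extend $a$ to $Y$ along $i$: the obstruction to factoring $a$ through $i$ is $\delta^\sharp(a) = a_\ast\delta \in \mathbb{E}(Z,A)$, which vanishes by hypothesis, so the sequence $\mathscr{C}(Y,A)\overset{-\circ i}{\to}\mathscr{C}(X,A)\overset{\delta^\sharp}{\to}\mathbb{E}(Z,A)$ yields $c' \in \mathscr{C}(Y,A)$ with $c'i = a$. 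Setting $h = \tilde h - jc'$ then gives $ph = \beta - pjc' = \beta$ (as $pj = 0$) and $hi = \tilde h i - ja = \alpha$, as required.

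The diagram-chasing through the six-term sequences is routine; the only thing I would treat with care, and which I regard as the genuine content rather than an obstacle, is the bookkeeping of where $\mathbb{E}(Z,A)=0$ is used. It is needed twice and for genuinely different reasons, once to kill the obstruction $\beta^\ast\zeta$ to lifting $\beta$, and once to kill the obstruction $a_\ast\delta$ to extending the correction term $a$ over $i$; one must also check that the correction $jc'$, lying in the image of $j$, is invisible to $p$ so that the outcome of the first step is preserved.
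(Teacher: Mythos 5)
Your proof is correct and is precisely the diagram chase the paper leaves implicit: the paper's proof consists of the grid built from the two six-term exact sequences, where $\mathbb{E}(Z,A)=0$ forces the injectivity of $\mathbb{E}(Y,A)\to\mathbb{E}(X,A)$ (your lift step) and the surjectivity of $\mathscr{C}(Y,A)\to\mathscr{C}(X,A)$ (your correction step). You have simply written out in full the chase that the paper calls ``straightforward.''
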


\begin{proof}
It follows from a straightforward diagram chase in the diagram:
% https://q.uiver.app/#q=WzAsMTAsWzAsMSwiKFksQSkiXSxbMSwxLCIoWSxCKSJdLFsyLDEsIihZLEMpIl0sWzMsMSwiXFxtYXRoYmJ7RX0oWSxBKSJdLFszLDAsIlxcbWF0aGJie0V9KFosQSkiXSxbMywyLCJcXG1hdGhiYntFfShYLEEpIl0sWzAsMywiXFxtYXRoYmJ7RX0oWixBKSJdLFswLDIsIihYLEEpIl0sWzEsMiwiKFgsQikiXSxbMiwyLCIoWCxDKSJdLFswLDFdLFsxLDJdLFsyLDNdLFs0LDNdLFszLDUsIiIsMix7InN0eWxlIjp7InRhaWwiOnsibmFtZSI6Imhvb2siLCJzaWRlIjoidG9wIn19fV0sWzAsNywiIiwyLHsic3R5bGUiOnsiaGVhZCI6eyJuYW1lIjoiZXBpIn19fV0sWzcsNl0sWzcsOF0sWzgsOV0sWzksNV0sWzEsOF0sWzIsOV1d
\[\begin{tikzcd}
	&&& {\mathbb{E}(Z,A)} \\
	{(Y,A)} & {(Y,B)} & {(Y,C)} & {\mathbb{E}(Y,A)} \\
	{(X,A)} & {(X,B)} & {(X,C)} & {\mathbb{E}(X,A)} \\
	{\mathbb{E}(Z,A)}
	\arrow[from=2-1, to=2-2]
	\arrow[from=2-2, to=2-3]
	\arrow[from=2-3, to=2-4]
	\arrow[from=1-4, to=2-4]
	\arrow[hook, from=2-4, to=3-4]
	\arrow[two heads, from=2-1, to=3-1]
	\arrow[from=3-1, to=4-1]
	\arrow[from=3-1, to=3-2]
	\arrow[from=3-2, to=3-3]
	\arrow[from=3-3, to=3-4]
	\arrow[from=2-2, to=3-2]
	\arrow[from=2-3, to=3-3]
\end{tikzcd}\]
\end{proof}

\begin{remark}
In a work in progress with Peter J{\o}rgensen, we proved a version adapted to higher homological algebra: Let $\mathscr{C}$ be an $n$-angulated category with suspension functor $\Sigma$, and let $A\overset{f}{\to}B\to C^\bullet \to \Sigma A$ and $X\overset{g}{\to}Y\to Z^\bullet\to\Sigma X$ be two $n$-angles. Then $f\,\square\,g$ if and only if any morphism of complexes $C^\bullet\to Z^\bullet$ is null-homotopic.
\end{remark}

\begin{definition}
 A \defn{complete cotorsion pair} in an extriangulated category $\mathscr{C}$ is a pair $(\mathcal{U},\mathcal{V})$ of full subcategories such that, for any $X\in\mathscr{C}$, the following holds:
 \begin{enumerate}
  \item We have $X\in\mathcal{V}$ if and only if for any $ U\in\mathcal{U}$, $\mathbb{E}(U,X)=0$.
  \item We have $X\in\mathcal{U}$ if and only if for any $V\in\mathcal{V}$, $\mathbb{E}(X,V)=0$.
  \item There are conflations $V_X\infl U_X\defl X$ and $X\infl V^X\defl U^X$ with $U_X,U^X\in\mathcal{U}$ and $V_X,V^X\in\mathcal{V}$.
 \end{enumerate}
\end{definition}

The following is mostly a consequence of the previous lemma.
\begin{proposition}\label{proposition: CP to wfs}
Let $(\mathcal{U},\mathcal{V})$ be a cotorsion pair on an extriangulated category $\mathscr{C}$. Define $\mathcal{L}$ to be the class of all those inflations whose cokernel belongs to $\mathcal{U}$. Dually, define $\mathcal{R}$ to be the class of all those deflations whose kernel belongs to $\mathcal{V}$.
Then $(\mathcal{L},\mathcal{R})$ is a weak factorisation system.
\end{proposition}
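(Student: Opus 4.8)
The plan is to verify the three defining conditions of a weak factorisation system one at a time: the weak orthogonality $\mathcal{L}\,\square\,\mathcal{R}$, the factorisation property, and stability under retracts. As the phrasing ``mostly a consequence of the previous lemma'' suggests, the orthogonality is the immediate payoff of the preceding lemma, so I would dispatch it first. Take $l\in\mathcal{L}$, realised by a conflation $A\infl B\defl C$ with $C\in\mathcal{U}$, and $r\in\mathcal{R}$, realised by a conflation $D\infl E\defl F$ with $D\in\mathcal{V}$. Since $C\in\mathcal{U}$ and $D\in\mathcal{V}$, the defining property of the cotorsion pair gives $\mathbb{E}(C,D)=0$, and the previous lemma then shows that $l$ has the left lifting property with respect to $r$. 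As $l$ and $r$ were arbitrary, this is precisely $\mathcal{L}\,\square\,\mathcal{R}$.

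For stability under retracts I would use that $\mathcal{U}$ and $\mathcal{V}$ are closed under direct summands, which is immediate from their characterisation by $\mathbb{E}$-vanishing (a summand of $C\in\mathcal U$ has its $\mathbb{E}(-,V)$ a summand of $\mathbb{E}(C,V)=0$), together with the fact that retracts of inflations are inflations and retracts of deflations are deflations, valid when $\mathscr{C}$ is weakly idempotent complete. A retract of a map in $\mathcal{L}$ is then an inflation whose cone is a summand of a cone lying in $\mathcal{U}$, hence itself in $\mathcal{U}$; dually for $\mathcal{R}$.

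The crux, and the step I expect to be the main obstacle, is the factorisation, because the two naive attempts each deliver only one of the two required halves. Writing $f=[\,f\ \pi\,]\circ\binom{1}{0}$, where $\pi\colon U_Y\defl Y$ is a deflation with $U_Y\in\mathcal{U}$ and cocone $V_Y\in\mathcal{V}$ coming from completeness, exhibits a genuine $\mathcal{L}$-map $\binom{1}{0}\colon X\infl X\oplus U_Y$ (cone $U_Y\in\mathcal{U}$) followed by the deflation $[\,f\ \pi\,]\colon X\oplus U_Y\defl Y$ (it is a deflation since $[\,f\ \pi\,]=[\,f\ 1_Y\,]\circ(1_X\oplus\pi)$ is a composite of deflations); however the cocone $K$ of $[\,f\ \pi\,]$ need not lie in $\mathcal{V}$, so $[\,f\ \pi\,]\notin\mathcal{R}$ in general. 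To repair the second half I would resolve $K$: by completeness there is a conflation $K\overset{j}{\infl}V^K\defl U^K$ with $V^K\in\mathcal{V}$ and $U^K\in\mathcal{U}$. Applying the homotopy pushout of \cref{prop:htpPO} to the two conflations $K\infl X\oplus U_Y\overset{[\,f\ \pi\,]}{\defl}Y$ and $K\overset{j}{\infl}V^K\defl U^K$, which share the source $K$, produces an object $W$ sitting in conflations $X\oplus U_Y\overset{w}{\infl}W\defl U^K$ and $V^K\infl W\overset{r}{\defl}Y$ with $r\circ w=[\,f\ \pi\,]$.

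Setting $l:=w\circ\binom{1}{0}$ then finishes the construction. The deflation $r$ has cocone $V^K\in\mathcal{V}$, so $r\in\mathcal{R}$. Both $w$ (cone $U^K\in\mathcal{U}$) and $\binom{1}{0}$ (cone $U_Y\in\mathcal{U}$) lie in $\mathcal{L}$, and $\mathcal{L}$ is closed under composition: a composite of inflations is an inflation, and by the octahedral axiom its cone is an extension of the two cones, which again lies in $\mathcal{U}$ since $\mathcal{U}$ is closed under extensions (apply $\mathbb{E}(-,V)$ to such an extension and invoke the long exact sequence of \cref{prop:extricat long exact sequences}). Hence $l\in\mathcal{L}$, and $r\circ l=r\circ w\circ\binom{1}{0}=[\,f\ \pi\,]\circ\binom{1}{0}=f$, which is the required factorisation $f=r\circ l$.
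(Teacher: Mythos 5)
Your proposal is essentially correct and, since the paper gives no argument beyond pointing at the preceding lemma, it supplies exactly the proof that is left implicit: the weak orthogonality $\mathcal{L}\,\square\,\mathcal{R}$ is the lemma applied to $\mathbb{E}(C,D)=0$, and your factorisation is the classical Hovey--Gillespie one, with the homotopy pushout of \cref{prop:htpPO} correctly standing in for the genuine pushout available in the abelian and exact settings. The auxiliary verifications are also sound: $[\,f\ \pi\,]$ is indeed a deflation, and $\mathcal{L}$ is closed under composition by (ET4) together with extension-closure of $\mathcal{U}$, which follows from the exact sequence of \cref{prop:extricat long exact sequences}.

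The retract step is where the two real issues lie. First, the appeal to weak idempotent completeness is unavoidable: without it, every section $f\colon A\to B$ is a retract in the arrow category of the split inflation $A\infl A\oplus B$, whose cone is $B$; so if some section fails to be an inflation and $B$ can be arranged to lie in $\mathcal{U}$ (take for instance the complete cotorsion pair $(\mathscr{C},\mathcal{V})$ with $\mathcal{V}$ the injectives), then $\mathcal{L}$ is not retract-closed. The proposition must therefore be read with the WIC hypothesis under which the paper later states \cref{thm:extriangulatedHovey}, and you were right to flag it. Second---and this is the genuine gap in your write-up---the assertion that the cone of a retract of $l\in\mathcal{L}$ is a direct summand of the cone of $l$ is not immediate in an extriangulated category. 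The exact-category justification uses that admissible epimorphisms are epimorphisms: the induced endomorphism $c$ of the cokernel satisfies $c\circ p'=p'$, hence $c=1_{C'}$. Deflations here need not be epimorphisms (think of triangulated categories), so this breaks down. The claim is still true, but requires an argument: complete the two retract squares to morphisms of conflations using (ET3), compose them to obtain an endomorphism $(1,1,c)$ of the conflation of the retract, and apply the five lemma to the exact sequences of \cref{prop:extricat long exact sequences} to see that $c\circ-\colon\mathscr{C}(T,C')\to\mathscr{C}(T,C')$ is bijective for every $T$; hence $c$ is an isomorphism and $C'$ is a summand of $C$, which lies in $\mathcal{U}$ by your summand-closure remark. (Alternatively, one can bypass cones of retracts altogether by proving that $\mathcal{L}$ and $\mathcal{R}$ coincide with the classes of maps having the lifting property against $\mathcal{R}$, respectively against $\mathcal{L}$; such classes are automatically retract-closed, though establishing the reverse inclusions again uses WIC and lifting against the deflations $V\defl 0$ with $V\in\mathcal{V}$.)
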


\begin{definition}
\label{def:model}
Let $\mathscr{C}$ be a category with finite products and coproducts. Then a \defn{model structure} on $\mathscr{C}$ can be defined as the data of three classes of morphisms $(\mathcal{F}\!\mathit{ib},\mathcal{C}\!\mathit{of},\mathbb{W})$ such that $(\mathcal{C}\!\mathit{of}\cap\mathbb{W},\mathcal{F}\!\mathit{ib})$ and $(\mathcal{C}\!\mathit{of},\mathcal{F}\!\mathit{ib}\cap\mathbb{W})$ are weak factorisation systems and the class $\mathbb{W}$ satisfies the two-out-of-three condition.
\end{definition}
The fact that $\mathbb{W}$ is automatically closed under retracts was first noticed by Andr\'e Joyal and Myles Tierney when $\mathscr{C}$ either has push-outs or has pull-backs (see also~\cite[Lemma 14.2.5]{MayPonto-MoreConcise}). The general case follows from~\cite{Egger} and the fact that weak equivalences are precisely those morphisms that become isomorphisms in the localisation $\mathscr{C}[\mathbb{W}^{-1}]$. This is explained in detail in Pierre Cagne's PhD Thesis~\cite[Section 2.2]{Cagne-PhD}.

\begin{question}\label{question: Hovey}
Let $(\mathcal{S},\mathcal{T}), (\mathcal{U},\mathcal{V})$ be two cotorsion pairs on an extriangulated category $\mathscr{C}$. We obtain, by \cref{proposition: CP to wfs}, two weak factorisation systems $(w\mathcal{C}\!\mathit{of},\mathcal{F}\!\mathit{ib})$, $(\mathcal{C}\!\mathit{of},w\mathcal{F}\!\mathit{ib})$. Define $\mathbb{W}=w\mathcal{F}\!\mathit{ib}\circ w\mathcal{C}\!\mathit{of}$. Under which conditions on the two cotorsion pairs is $(\mathcal{F}\!\mathit{ib},\mathcal{C}\!\mathit{of},\mathbb{W})$ a model structure on $\mathscr{C}$?
\end{question}

We note that if $(\mathcal{F}\!\mathit{ib},\mathcal{C}\!\mathit{of},\mathbb{W})$ is an exact (as in \cref{def:exact model} below) model structure on $\mathscr{C}$, then an object is cofibrant (resp. fibrant, trivially cofibrant, trivially fibrant) if and only if it belongs to $\mathcal{U}$ (resp. to $\mathcal{T}$, $\mathcal{S}$, $\mathcal{V}$).
Part of the answer to \cref{question: Hovey} is then rather immediate:
First, it is necessary that $\mathcal{S}\subseteq\mathcal{U}$ and $\mathcal{V}\subseteq\mathcal{T}$ (those two conditions are equivalent).
Second, both $\mathcal{U}\cap\mathcal{V}$ and $\mathcal{S}\cap\mathcal{T}$ should be the full subcategory of trivially cofibrant and trivially fibrant objects, hence it is necessary that $\mathcal{U}\cap\mathcal{V}=\mathcal{S}\cap\mathcal{T}$.
The last condition is less obvious, and arises when considering the following question: Is it possible to read the two-out-of-three property for $\mathbb{W}$ directly on the two cotorsion pairs?
Some very specific instance of the two-out-of-three property implies that, for any object $X$ in $\mathscr{C}$, the morphism $0\to X$ is a weak equivalence if and only if so is $X\to 0$.
This easily translates to a condition on $(\mathcal{S},\mathcal{T}), (\mathcal{U},\mathcal{V})$. It turns out that this condition (condition (3) below) implies the two-out-of-three property for $\mathbb{W}$ in full generality.

\begin{definition}
 Two cotorsion pairs $(\mathcal{S},\mathcal{T}), (\mathcal{U},\mathcal{V})$ on an extriangulated category $\mathscr{C}$ are called:
 \begin{enumerate}
  \item \defn{Twin} cotorsion pairs if $\mathcal{S}\subseteq\mathcal{U}$ (or equivalently $\mathcal{V}\subseteq\mathcal{T}$);
  \item \defn{Concentric} twin cotorsion pairs if moreover $\mathcal{U}\cap\mathcal{V}=\mathcal{S}\cap\mathcal{T}$;
  \item \defn{Hovey twin cotorsion pairs} if moreover, for any object $X$, the existence of a conflation $V\infl S\defl X$ with $V\in\mathcal{V}$, $S\in\mathcal{S}$ is equivalent to the existence of a conflation $X\infl V'\defl S'$ with $V'\in\mathcal{V}$, $S'\in\mathcal{S}$.
 \end{enumerate}
\end{definition}

Not all model structures on $\mathscr{C}$ can be expected to come from twin cotorsion pairs (for counter-examples, see the model structures arising from rigid objects in~\cite{JacquetMalo-model}). However, all those that interact nicely with the extriangulated structure do.

\begin{definition}
\label{def:exact model}
 A model structure on $\mathscr{C}$ is called an \defn{exact model structure} if its cofibrations are precisely those inflations whose cone is cofibrant and its fibrations are precisely those deflations whose cocone is fibrant.
\end{definition}

In an exact model structure, the acyclic cofibrations coincide with the inflations having a trivially cofibrant cone, and the acyclic fibrations coincide with the deflations having a trivially fibrant cocone.

\subsection{Hovey's correspondence for extriangulated categories}

In order to prove that the weak equivalences defined by some choice of Hovey twin cotorsion pairs satisfy the two-out-of-three property, some technical assumption is needed: 

\begin{definition}
 An additive category is \defn{weakly idempotent complete} if every section has a cokernel or, equivalently, if every retraction has a kernel.
\end{definition}

We note that abelian categories and triangulated categories are weakly idempotent complete as additive categories.

\begin{proposition}[{see~\cite[Section 7]{Buhler-Exact} for exact categories, \cite[Proposition 2.7]{Klapproth-nExtClosed}, \cite[Section II.1.3]{Tattar-thesis} and \cite[Proposition 3.33]{Msapato} for extriangulated categories}]
 Let $\mathscr{C}$ be an extriangulated category. The following are equivalent:
 \begin{enumerate}
  \item The additive category $\mathscr{C}$ is weakly idempotent complete.
  \item Every section is an inflation.
  \item Every retraction is a deflation.
  \item If a composition $g\circ f$ is an inflation, then $f$ is an inflation.
  \item If a composition $g\circ f$ is a deflation, then $g$ is a deflation.
 \end{enumerate}
\end{proposition}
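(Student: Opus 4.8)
The plan is to establish the cycle of implications $(1)\Rightarrow(2)\Rightarrow(4)\Rightarrow(1)$ and then invoke duality. Indeed, $(1)$ is self-dual (``every section has a cokernel'' and ``every retraction has a kernel'' are the two standard formulations of weak idempotent completeness), while $(2)$ and $(3)$ are dual to one another, as are $(4)$ and $(5)$. Hence, once $(1)$, $(2)$, $(4)$ are known to be equivalent, applying \cref{remark:dualExtricat} to each of the three implications yields the equivalence of $(1)$, $(3)$, $(5)$, and all five conditions coincide.

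For $(1)\Rightarrow(2)$, let $s\in\mathscr{C}(A,B)$ be a section with retraction $r$ satisfying $rs=1_A$. Weak idempotent completeness provides a kernel $k\colon K\to B$ of the retraction $r$, and a routine computation in the additive category $\mathscr{C}$ (using $r(1_B-sr)=0$ to factor $1_B-sr$ through $k$) shows that $[s\ k]\colon A\oplus K\to B$ is an isomorphism identifying $s$ with the canonical inclusion. The sequence $A\xrightarrow{s}B\to K$ is therefore isomorphic to the split conflation $A\xrightarrow{\left[\bsm1\\0\esm\right]}A\oplus K\xrightarrow{\left[\bsm0\;1\esm\right]}K$, which realises $0\in\mathbb{E}(K,A)$ by additivity of $\mathfrak{s}$; hence $s$ is an inflation.

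I record two immediate consequences that feed into the cycle. First, $(4)\Rightarrow(2)$ is immediate: given a section $s$ with retraction $r$, the composite $r\circ s=1_A$ is an inflation (the identity sits in the conflation $A\xrightarrow{1}A\to 0$), so $(4)$ forces $s$ to be an inflation. Second, $(2)\Rightarrow(1)$: if a section $s\colon A\infl B$ is an inflation, realised by $A\xrightarrow{s}B\xrightarrow{p}C\overset{\delta}{\dashrightarrow}$, then precomposition with a retraction of $s$ shows that $\mathscr{C}(s,-)\colon\mathscr{C}(B,-)\to\mathscr{C}(A,-)$ is an epimorphism; the first exact sequence of \cref{prop:extricat long exact sequences} then forces $\delta^\sharp=0$, whence $\delta=\delta^\sharp(1_A)=0$ and the conflation splits. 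In particular $p$ is a genuine cokernel of $s$, so every section has a cokernel and $\mathscr{C}$ is weakly idempotent complete. Combining $(4)\Rightarrow(2)$ with $(2)\Rightarrow(1)$ gives $(4)\Rightarrow(1)$.

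It remains to prove $(2)\Rightarrow(4)$, which is the main obstacle. Assume every section is an inflation and suppose $g\circ f$ is an inflation, with $f\in\mathscr{C}(A,B)$ and $g\in\mathscr{C}(B,C)$; the goal is to exhibit a conflation with inflation $f$. The strategy mirrors B\"uhler's argument for exact categories \cite[Section 7]{Buhler-Exact}, with push-outs replaced by the extriangulated tools of \cref{prop:htpPO,prop:9lemma}. Concretely, I would complete $g\circ f$ to a conflation $A\xrightarrow{gf}C\defl D\overset{\delta}{\dashrightarrow}$, observe that the split section $\left[\bsm f\\1\esm\right]\colon A\to B\oplus A$ is an inflation by hypothesis $(2)$, and then amalgamate this inflation with the conflation of $gf$ --- both having source $A$ --- via the homotopy push-out of \cref{prop:htpPO}. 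A nine-lemma diagram (\cref{prop:9lemma}) organises the resulting maps into a commutative array of conflations, after which weak idempotent completeness (already available from $(2)\Rightarrow(1)$) is used to cancel the auxiliary split summand and isolate a conflation whose inflation is $f$. The delicate point, and where the hypothesis is genuinely needed, is precisely this last cancellation: extracting $f$ requires splitting off the summand $A$, which is exactly the one-sided idempotent that weak idempotent completeness guarantees to split. I expect the bookkeeping of this final step to be the crux of the argument.
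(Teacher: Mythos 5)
Your cycle $(1)\Rightarrow(2)$, $(4)\Rightarrow(2)$, $(2)\Rightarrow(1)$ and the reduction of $(3)$, $(5)$ to their duals via \cref{remark:dualExtricat} are correct and complete. (Note that the survey itself gives no proof of this proposition --- it is quoted from B\"uhler, Klapproth, Tattar and Msapato --- so your argument has to stand on its own.) The genuine gap is $(2)\Rightarrow(4)$, the extriangulated ``obscure axiom'', which carries essentially all the content of the proposition: you only sketch it, and the sketch as written cannot be completed. Three concrete problems. First, $\left[\bsm f\\1\esm\right]\colon A\to B\oplus A$ is an inflation in \emph{every} extriangulated category, with no hypothesis: the sequence $A\to B\oplus A\xrightarrow{\left[\bsm 1 & -f\esm\right]}B$ is equivalent, via the isomorphism $\left[\bsm f&1\\1&0\esm\right]\colon A\oplus B\to B\oplus A$, to the split conflation, hence realises $0\in\mathbb{E}(B,A)$. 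So hypothesis $(2)$ is being injected where it is not needed, a sign that the actual use of weak idempotent completeness is unaccounted for. Second, the homotopy push-out of \cref{prop:htpPO} applied to $A\overset{gf}{\infl}C\defl D$ and $A\overset{\left[\bsm f\\1\esm\right]}{\infl}B\oplus A$ is content-free: the resulting extriangle $A\infl C\oplus B\oplus A\defl E$ has an inflation whose last component is $-1_A$, which forces $E\cong C\oplus B$, and under this identification the two conflations through $E$ are isomorphic to the split one and to $(gf)\oplus 1_B$ respectively; the morphism $f$ is never isolated, because pushing out along a split inflation merely reshuffles the data one started with. Third, and decisively, the step you defer (``cancel the auxiliary split summand and isolate a conflation whose inflation is $f$'') \emph{is} the obscure axiom: passing from ``some matrix involving $f$ is an inflation'' to ``$f$ is an inflation'' is exactly the statement being proved, and weak idempotent completeness does not license such cancellations by fiat.

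For the record, here is how $(2)\Rightarrow(4)$ can be closed with the paper's tools, and where WIC genuinely enters. Using only the axioms: $\left[\bsm 1\\ f\esm\right]\colon A\to A\oplus B$ realises the split extension, $\left[\bsm gf&0\\0&1\esm\right]$ is a direct sum of inflations, and inflations are closed under composition by (ET4); hence $\left[\bsm gf\\ f\esm\right]=\left[\bsm gf&0\\0&1\esm\right]\left[\bsm 1\\ f\esm\right]$ is an inflation, and composing with the automorphism $\left[\bsm 1&-g\\0&1\esm\right]$ of $C\oplus B$ shows that $\left[\bsm 0\\ f\esm\right]\colon A\to C\oplus B$ is an inflation. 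Now apply \cref{prop:9lemma} (this is the step that requires weak idempotent completeness, which you have already deduced from $(2)$) to the commutative square with horizontal inflations $\left[\bsm 0\\ f\esm\right]\colon A\infl C\oplus B$ and $0\infl C$, and vertical deflations $A\defl 0$ and $\left[\bsm 1&0\esm\right]\colon C\oplus B\defl C$. In the resulting $3\times 3$ diagram, the first column $K\infl A\defl 0$ forces $K\cong A$; the second column $L\overset{\beta}{\infl}C\oplus B\overset{\left[\bsm 1&0\esm\right]}{\defl}C$ has a deflation that is a retraction, so by the dual of your own splitting argument its extension vanishes and $\beta=\left[\bsm 0\\ b_0\esm\right]$ for some isomorphism $b_0\colon L\to B$; commutativity of the top-left square then gives $b_0\,a=f\alpha$, where $a\colon K\infl L$ is the inflation of the top row and $\alpha\colon K\to A$ is the isomorphism above, so $f=b_0\,a\,\alpha^{-1}$ is an inflation. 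Your instinct that a nine-lemma cancellation is the crux was right; the point is that it must be applied to the square built from $\left[\bsm 0\\ f\esm\right]$ and the coordinate projections, not to the push-out along $\left[\bsm f\\1\esm\right]$.
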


The following result is originaly due to Mark Hovey~\cite{Hovey-CPmodelRT,Hovey-CPmodel} for abelian categories.
It was proven in different generalities in~\cite{BeligiannisReiten} (specific case using one cotorsion pair only), in~\cite{Gillespie-exact} for exact categories, in~\cite{Yang-TriangulatedHovey} for triangulated categories and in~\cite{NakaokaPalu} for extriangulated categories.

\begin{theorem}[Hovey's correspondence]
\label{thm:extriangulatedHovey}
 Let $\mathscr{C}$ be a weakly idempotent complete extriangulated category.
 Then there is a bijective correspondence between Hovey twin cotorsion pairs in $\mathscr{C}$ and exact model structures on $\mathscr{C}$.
\end{theorem}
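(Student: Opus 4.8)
The plan is to construct a map in each direction and check they are mutually inverse. Given a Hovey twin cotorsion pair $(\mathcal{S},\mathcal{T}),(\mathcal{U},\mathcal{V})$, I produce a model structure by setting $\mathcal{C}\!\mathit{of}$ to be the inflations with cone in $\mathcal{U}$, $\mathcal{F}\!\mathit{ib}$ the deflations with cocone in $\mathcal{T}$, and $\mathbb{W}=w\mathcal{F}\!\mathit{ib}\circ w\mathcal{C}\!\mathit{of}$ exactly as in \cref{question: Hovey}, where the two weak factorisation systems $(w\mathcal{C}\!\mathit{of},\mathcal{F}\!\mathit{ib})$ and $(\mathcal{C}\!\mathit{of},w\mathcal{F}\!\mathit{ib})$ come from \cref{proposition: CP to wfs} applied to $(\mathcal{S},\mathcal{T})$ and $(\mathcal{U},\mathcal{V})$ respectively. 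Conversely, given an exact model structure, I recover two cotorsion pairs by declaring $\mathcal{U}$ to be the cofibrant objects, $\mathcal{T}$ the fibrant objects, $\mathcal{S}$ the trivially cofibrant objects and $\mathcal{V}$ the trivially fibrant objects; the lifting axioms of the model structure, translated through \cref{proposition: CP to wfs}, should force $(\mathcal{S},\mathcal{T})$ and $(\mathcal{U},\mathcal{V})$ to be complete cotorsion pairs, and the remarks preceding the definition already identify the twin and concentric conditions as necessary.

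The bulk of the work, and the \emph{main obstacle}, is verifying that the class $\mathbb{W}$ built from a Hovey twin cotorsion pair actually satisfies the two-out-of-three property; everything else is comparatively formal. My strategy here follows the analysis sketched before \cref{def:exact model}: I would first give an intrinsic characterisation of $\mathbb{W}$, namely that a morphism lies in $\mathbb{W}$ if and only if it factors as a deflation with cocone in $\mathcal{V}$ following an inflation with cone in $\mathcal{S}$, and then show this is equivalent to the seemingly weaker condition that the morphism factors through an object of $\mathcal{U}\cap\mathcal{V}=\mathcal{S}\cap\mathcal{T}$ up to the appropriate conflations. The Hovey condition (3) is precisely what guarantees that ``$0\to X\in\mathbb{W}$'' and ``$X\to 0\in\mathbb{W}$'' agree, i.e. that the class of \emph{trivial objects} (those $X$ for which $0\to X$, equivalently $X\to 0$, is a weak equivalence) is unambiguous. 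Once $\mathbb{W}$ is described as the morphisms whose \emph{cone and cocone are trivial}, two-out-of-three reduces to showing that this class of trivial objects is closed under extensions and under the relevant direct summands arising in a conflation.

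To run this argument I would lean heavily on the homological tools assembled in Section~1: the long exact sequences of \cref{prop:extricat long exact sequences} to translate the orthogonality conditions defining the cotorsion pairs into $\mathbb{E}$-vanishing statements, the homotopy push-out/pull-back of \cref{prop:htpPO} to build the comparison conflations needed when composing two morphisms in $\mathbb{W}$, and the extriangulated nine lemma \cref{prop:9lemma} to organise a $3\times 3$ diagram whose rows and columns are conflations and read off that the middle cone is trivial whenever the outer two are. It is exactly at the nine-lemma and the cancellation step that \emph{weak idempotent completeness} is indispensable: \cref{prop:9lemma} is stated only under that hypothesis, and the proposition characterising weak idempotent completeness ensures that sections are inflations and retractions are deflations, which is what lets me treat the summand inclusions and projections appearing in two-out-of-three as genuine conflations rather than merely split monos and epis.

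Finally, I would close the loop on the bijection. Starting from a Hovey twin cotorsion pair, passing to the model structure and back recovers the four subcategories because cofibrant/fibrant/trivially (co)fibrant objects of an \emph{exact} model structure are manifestly $\mathcal{U},\mathcal{T},\mathcal{S},\mathcal{V}$, as recorded after \cref{question: Hovey}. Conversely, starting from an exact model structure, the two weak factorisation systems are determined by their (co)fibration and trivial (co)fibration classes, which \cref{def:exact model} pins down entirely in terms of inflations/deflations with prescribed cones/cocones; hence reconstructing the cotorsion pairs and reapplying \cref{proposition: CP to wfs} returns the same two factorisation systems, and the original $\mathbb{W}$ is recovered since in any model structure $\mathbb{W}=(\mathcal{F}\!\mathit{ib}\cap\mathbb{W})\circ(\mathcal{C}\!\mathit{of}\cap\mathbb{W})$ up to the two-out-of-three closure. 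The only genuinely delicate point in this direction is checking that the exactness hypothesis on the model structure is strong enough to guarantee that $(\mathcal{S},\mathcal{T})$ and $(\mathcal{U},\mathcal{V})$ are \emph{complete}, which again follows from the factorisation axiom by factoring the maps $0\to X$ and $X\to 0$.
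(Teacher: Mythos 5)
First, a caveat on the comparison itself: this survey does not actually prove \cref{thm:extriangulatedHovey} --- the theorem is imported from \cite{NakaokaPalu}, and what the paper supplies is precisely the scaffolding you reuse, namely \cref{proposition: CP to wfs}, the definition of $\mathbb{W}$ in \cref{question: Hovey}, the necessity discussion, and \cref{def:model,def:exact model}. Measured against the cited proof (and its abelian and exact ancestors due to Hovey and Gillespie), your plan follows the standard route and is essentially correct: the crux is two-out-of-three for $\mathbb{W}$, reduced to thickness of the class $\mathcal{N}$ of trivial objects, i.e.\ those $X$ admitting a conflation $V\infl S\defl X$ with $V\in\mathcal{V}$, $S\in\mathcal{S}$ (equivalently, by condition (3), a conflation $X\infl V'\defl S'$); the tools you name (\cref{prop:extricat long exact sequences,prop:htpPO,prop:9lemma} and weak idempotent completeness) are exactly the ones used; and you are right that the remaining steps are comparatively formal --- in particular $\mathcal{C}\!\mathit{of}\cap\mathbb{W}=w\mathcal{C}\!\mathit{of}$ and $\mathcal{F}\!\mathit{ib}\cap\mathbb{W}=w\mathcal{F}\!\mathit{ib}$, which are what make your triple satisfy \cref{def:model}, follow from the retract argument, lifting $f\in\mathcal{C}\!\mathit{of}\cap\mathbb{W}$ against the $w\mathcal{F}\!\mathit{ib}$ part of a factorisation witnessing $f\in\mathbb{W}$.

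Three imprecisions should nonetheless be fixed. (i) What you announce as the ``intrinsic characterisation'' of $\mathbb{W}$ --- a deflation with cocone in $\mathcal{V}$ following an inflation with cone in $\mathcal{S}$ --- is not a lemma but the definition $\mathbb{W}=w\mathcal{F}\!\mathit{ib}\circ w\mathcal{C}\!\mathit{of}$; the statement you actually need is that $\mathbb{W}$ coincides with the a priori larger class of composites whose cone and cocone are only required to lie in $\mathcal{N}$, and proving this equivalence (via completeness of the pairs and the splitting arguments giving $\mathcal{U}\cap\mathcal{N}\subseteq\mathcal{S}$ and $\mathcal{N}\cap\mathcal{T}\subseteq\mathcal{V}$) is a genuine step. (ii) The phrase ``factors through an object of $\mathcal{U}\cap\mathcal{V}=\mathcal{S}\cap\mathcal{T}$'' is false if read literally: every identity lies in $\mathbb{W}$, but $\mathrm{id}_X$ factors through $\mathcal{S}\cap\mathcal{T}$ only when $X$ is a summand of such an object; only the conflation formulation is correct. (iii) Closure of $\mathbb{W}$ under composition --- rewriting a composite in $w\mathcal{C}\!\mathit{of}\circ w\mathcal{F}\!\mathit{ib}$ in the form $w\mathcal{F}\!\mathit{ib}\circ w\mathcal{C}\!\mathit{of}$ using \cref{prop:htpPO} --- is a separate, diagram-heavy step that precedes two-out-of-three; your outline names the right tools but exhibits none of the diagrams, so as it stands it is a correct plan for, rather than a proof of, the hard part.
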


The correspondence sends a model structure to the cotorsion pairs $(\mathcal{S},\mathcal{T}), (\mathcal{U},\mathcal{V})$, where the four subcategories are given respectively by the trivially cofibrant, the fibrant, the cofibrant and the trivially fibrant objects.

We note the immediate corollary:
\begin{corollary}\label{corollary: Hovey}
 Let $\mathscr{C}$ be a weakly idempotent complete extriangulated category, and let $(\mathcal{S},\mathcal{T}), (\mathcal{U},\mathcal{V})$ be Hovey twin cotorsion pairs on $\mathscr{C}$.
 Then the inclusion $\mathcal{T}\cap\,\mathcal{U}\to\mathscr{C}$ induces an equivalence of categories
 \[
 (\mathcal{T}\cap\,\mathcal{U}) \,/\, [\mathcal{S}\cap\mathcal{V}] \overset{\simeq}{\longrightarrow} \mathscr{C}[\mathbb{W}^{-1}]
 \]
expressing the localisation of $\mathscr{C}$ at the class $\mathbb{W}$ as an ideal subquotient of $\mathscr{C}$.
\end{corollary}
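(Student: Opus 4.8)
The plan is to deduce the statement from \cref{thm:extriangulatedHovey} together with the general theory of model categories. First I would feed the Hovey twin cotorsion pairs into \cref{thm:extriangulatedHovey} to obtain an exact model structure on $\mathscr{C}$ in which the cofibrant, fibrant, trivially cofibrant and trivially fibrant objects are exactly $\mathcal{U}$, $\mathcal{T}$, $\mathcal{S}$ and $\mathcal{V}$ respectively; hence the bifibrant objects are precisely $\mathcal{T}\cap\mathcal{U}$. Before anything else I would record that the trivially bifibrant objects form a single class: since $\mathcal{S}\subseteq\mathcal{U}$ and $\mathcal{V}\subseteq\mathcal{T}$ one has $\mathcal{S}\cap\mathcal{V}\subseteq\mathcal{U}\cap\mathcal{V}$ and $\mathcal{S}\cap\mathcal{V}\subseteq\mathcal{S}\cap\mathcal{T}$, while concentricity gives $\mathcal{U}\cap\mathcal{V}=\mathcal{S}\cap\mathcal{T}$; chasing an object of $\mathcal{U}\cap\mathcal{V}=\mathcal{S}\cap\mathcal{T}$ through both descriptions yields the reverse inclusions, so that $\mathcal{S}\cap\mathcal{V}=\mathcal{U}\cap\mathcal{V}=\mathcal{S}\cap\mathcal{T}$. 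This common class $\omega=\mathcal{S}\cap\mathcal{V}$ plays the role of the projective-injective objects.

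Next I would invoke the fundamental theorem of homotopical algebra: for any model category the localisation at the weak equivalences is equivalent to the full subcategory of bifibrant objects with morphisms given by homotopy classes of maps. Essential surjectivity of the functor $\mathcal{T}\cap\mathcal{U}\to\mathscr{C}[\mathbb{W}^{-1}]$ induced by the inclusion is then immediate from the defining conflations of the two cotorsion pairs: the conflation $V_X\infl U_X\defl X$ coming from $(\mathcal{U},\mathcal{V})$ exhibits a trivial fibration $U_X\defl X$ with $U_X$ cofibrant, and the conflation $X\infl V^X\defl U^X$ coming from $(\mathcal{S},\mathcal{T})$—applied after the first—produces a fibrant replacement by a trivial cofibration, so every object is linked to a bifibrant one by a zig-zag of weak equivalences. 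It therefore remains to compute the hom-sets, namely to show that on $\mathcal{T}\cap\mathcal{U}$ the homotopy relation is exactly the congruence ``$f\simeq g$ if and only if $f-g$ factors through an object of $\omega$'', since this is what turns homotopy classes into the ideal quotient $(\mathcal{T}\cap\mathcal{U})/[\mathcal{S}\cap\mathcal{V}]$.

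The cleanest route to that identification, and the step I expect to be the main obstacle, is to show that the full subcategory $\mathcal{T}\cap\mathcal{U}$, equipped with the extriangulated structure it inherits as an extension-closed subcategory (\cref{sssection:ext-closed}), is a \emph{Frobenius} extriangulated category whose projective-injective objects are precisely $\omega=\mathcal{S}\cap\mathcal{V}$. Here the cotorsion-pair axioms do the work: the $\mathbb{E}$-orthogonality in the definition of the pairs shows that objects of $\omega$ are simultaneously $\mathbb{E}$-projective and $\mathbb{E}$-injective within $\mathcal{T}\cap\mathcal{U}$, while the defining conflations provide enough of them. Granting this, \cref{prop:Frobenius} presents the stable category $(\mathcal{T}\cap\mathcal{U})/[\omega]$ as a triangulated category whose morphisms are, by construction, the maps of $\mathcal{T}\cap\mathcal{U}$ modulo those factoring through $\omega$. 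Finally I would check that this stable quotient agrees with the homotopy-class description of $\mathscr{C}[\mathbb{W}^{-1}]$ from the previous paragraph, by constructing explicit path objects from the conflations factoring the diagonal $B\to B\oplus B$ and verifying, via the six-term exact sequences of \cref{prop:extricat long exact sequences} and the homotopy pushouts of \cref{prop:htpPO}, that a right (equivalently left) homotopy $f\simeq g$ exists if and only if $f-g$ factors through $\omega$. Combining essential surjectivity with this identification of hom-sets yields the asserted equivalence.
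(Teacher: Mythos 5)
Your overall skeleton---feeding the pairs into \cref{thm:extriangulatedHovey}, identifying the bifibrant objects with $\mathcal{T}\cap\mathcal{U}$, checking $\mathcal{S}\cap\mathcal{V}=\mathcal{U}\cap\mathcal{V}=\mathcal{S}\cap\mathcal{T}$, and invoking the fundamental theorem of model categories together with an identification of the homotopy relation---is exactly how the paper intends this ``immediate'' corollary to be read, and your essential surjectivity argument is fine (modulo a notational slip: the fibrant replacement conflation coming from $(\mathcal{S},\mathcal{T})$ is $U_X\infl T^{U_X}\defl S^{U_X}$ with cone in $\mathcal{S}$). The genuine gap is the step you single out as the ``cleanest route'': the claim that $\mathcal{T}\cap\mathcal{U}$ is Frobenius extriangulated with projective-injectives precisely $\omega=\mathcal{S}\cap\mathcal{V}$. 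This requires the cotorsion pairs to be \emph{hereditary}, which is not part of the definition of Hovey twin cotorsion pairs. Concretely, ``the defining conflations provide enough of them'' fails: for $X\in\mathcal{T}\cap\mathcal{U}$, completeness of $(\mathcal{S},\mathcal{T})$ gives a conflation $T'\infl S'\defl X$ whose middle term lies in $\mathcal{S}\cap\mathcal{T}=\omega$ (by extension-closure of $\mathcal{T}$), but whose kernel $T'$ lies only in $\mathcal{T}$; since the exact sequences of \cref{prop:extricat long exact sequences} stop at $\mathbb{E}$, nothing forces $\mathbb{E}(T',\mathcal{V})=0$, i.e.\ $T'\in\mathcal{U}$---that conclusion is exactly what heredity of $(\mathcal{U},\mathcal{V})$ would provide. (Moreover, one only gets $\omega\subseteq\{\text{proj-inj of }\mathcal{T}\cap\mathcal{U}\}$, not equality, and \cref{prop:Frobenius} quotients by \emph{all} projectives.) The paper's own architecture marks this dividing line: Gillespie's proposition is stated \emph{with} the hereditary hypothesis precisely because its proof runs through the Frobenius structure of $\mathcal{T}\cap\mathcal{U}$, and in the paper that proposition is deduced \emph{from} \cref{corollary: Hovey}, not the other way around; \cref{theorem: homotopy triangulated} exists as a separate, harder theorem precisely because the Frobenius route is unavailable for general Hovey twin cotorsion pairs.

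The good news is that your final sentence already contains the correct argument, and it makes the Frobenius detour unnecessary: once the fundamental theorem gives $\mathscr{C}[\mathbb{W}^{-1}]\simeq(\mathcal{T}\cap\mathcal{U})/\!\sim$, it suffices to construct path (or cylinder) objects by factoring the (co)diagonal of a bifibrant object using the completeness conflations, and to verify---using only the lifting lemma and \cref{prop:extricat long exact sequences}, with no heredity---that for bifibrant objects $f\sim g$ if and only if $f-g$ factors through an object of $\omega$. Promote that ``check'' to the main step and delete the Frobenius claim; this is the intended proof. One further caveat deserving a sentence in a careful write-up: the paper's \cref{def:model} assumes only finite products and coproducts, so the classical fundamental theorem (usually stated for finitely bicomplete categories) must be cited or adapted in this generality, which is what the detailed treatment in~\cite{NakaokaPalu} supplies.
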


\begin{remark}
\label{rk:JKPW}
 The work in progress~\cite{JassoKvammePaluWalde} with Gustavo Jasso, Sondre Kvamme and Tashi Walde, considers a version of Hovey's correspondence for fibration categories, in the setting of exact $\infty$-categories.
\end{remark}

\subsection{Triangulated homotopy categories}

\Cref{corollary: Hovey} gives a strategy for studying the structure of the homotopy category $\mathscr{C}[\mathbb{W}^{-1}]$.

\begin{definition}
A cotorsion pair $(\mathcal{X},\mathcal{Y})$ of an extriangulated category $\mathscr{C}$ is called \defn{hereditary} if $\mathcal{X}$ is stable under taking kernels of deflations (between objects in $\mathcal{X}$) and $\mathcal{Y}$ is stable under taking cokernels of inflations (between objects in $\mathcal{Y}$).
\end{definition}

\begin{proposition}[James Gillespie~\cite{Gillespie-Hereditary}]
 Let $\mathscr{E}$ be an exact category with some Hovey twin cotorsion pairs $(\mathcal{S},\mathcal{T}),(\mathcal{U},\mathcal{V})$.
Assume that $(\mathcal{S},\mathcal{T})$ and $(\mathcal{U},\mathcal{V})$ are hereditary.
Then the homotopy category $\mathscr{E}[\mathbb{W}^{-1}]$, for the exact model structure obtained by Hovey's correspondence, is triangulated.
\end{proposition}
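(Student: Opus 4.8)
The plan is to combine \cref{corollary: Hovey} with \cref{prop:Frobenius}: first I would express the homotopy category as an ideal subquotient, and then recognise that subquotient as the stable category of a Frobenius extriangulated category. By \cref{corollary: Hovey}, there is an equivalence
\[
\mathscr{E}[\mathbb{W}^{-1}] \;\simeq\; (\mathcal{T}\cap\mathcal{U})\,/\,[\mathcal{S}\cap\mathcal{V}].
\]
Write $\mathcal{M}=\mathcal{T}\cap\mathcal{U}$ and $\omega=\mathcal{S}\cap\mathcal{V}$. Using the twin inclusion $\mathcal{V}\subseteq\mathcal{T}$ together with the concentric identity $\mathcal{U}\cap\mathcal{V}=\mathcal{S}\cap\mathcal{T}$, I would first record the chain of equalities $\omega=\mathcal{S}\cap\mathcal{V}=\mathcal{S}\cap\mathcal{T}=\mathcal{U}\cap\mathcal{V}$. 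Since $\mathcal{U}$ is closed under extensions (being the left half of a cotorsion pair) and $\mathcal{T}$ is closed under extensions (being the right half of one), their intersection $\mathcal{M}$ is extension-closed in $\mathscr{E}$ and thus inherits an extriangulated structure by \cref{sssection:ext-closed}, with $\mathbb{E}$-groups computed inside $\mathscr{E}$.

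The core of the argument is to show that $\mathcal{M}$, with this structure, is Frobenius with subcategory of projective-injectives exactly $\omega$. That every object of $\omega$ is projective and injective in $\mathcal{M}$ is immediate: for $P\in\omega\subseteq\mathcal{S}$ and $X\in\mathcal{M}\subseteq\mathcal{T}$ one has $\mathbb{E}(P,X)=0$ because $(\mathcal{S},\mathcal{T})$ is a cotorsion pair, and dually for injectivity via $(\mathcal{U},\mathcal{V})$. To produce enough projectives I would start from $X\in\mathcal{M}$ and take the conflation $T_1\infl S_1\defl X$ given by $(\mathcal{S},\mathcal{T})$, with $S_1\in\mathcal{S}$ and $T_1\in\mathcal{T}$. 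As $T_1,X\in\mathcal{T}$ and $\mathcal{T}$ is extension-closed, $S_1\in\mathcal{T}$, whence $S_1\in\mathcal{S}\cap\mathcal{T}=\omega$; and as $S_1,X\in\mathcal{U}$, heredity of $(\mathcal{U},\mathcal{V})$ forces the kernel $T_1$ into $\mathcal{U}$, so $T_1\in\mathcal{M}$ and the whole conflation lies in $\mathcal{M}$. This is a deflation onto $X$ from the projective $S_1\in\omega$. Dually, the conflation $X\infl V^1\defl U^1$ from $(\mathcal{U},\mathcal{V})$ has $V^1\in\omega$, and heredity of $(\mathcal{S},\mathcal{T})$ keeps the cokernel $U^1$ in $\mathcal{T}$, hence in $\mathcal{M}$, giving enough injectives. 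Finally, if $P\in\mathcal{M}$ is projective, the conflation $T_1\infl S_1\defl P$ just produced splits by projectivity of $P$ (apply the long exact sequence of \cref{prop:extricat long exact sequences} to $\mathbb{E}(P,T_1)=0$), so $P$ is a direct summand of $S_1\in\omega$; since $\omega$ is closed under summands, $P\in\omega$, and dually for injectives. Thus the projectives and the injectives of $\mathcal{M}$ both coincide with $\omega$, so $\mathcal{M}$ is Frobenius, and \cref{prop:Frobenius} shows $\underline{\mathcal{M}}=\mathcal{M}/[\omega]$ is triangulated. Composing with the equivalence above then shows $\mathscr{E}[\mathbb{W}^{-1}]$ is triangulated.

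The step I expect to be the main obstacle is exactly the crosswise use of the two heredity hypotheses in establishing the Frobenius property: one needs heredity of $(\mathcal{U},\mathcal{V})$ to keep the cocones of the projective deflations inside $\mathcal{M}$, and heredity of $(\mathcal{S},\mathcal{T})$ to keep the cones of the injective inflations inside $\mathcal{M}$. Without this, the two cotorsion pairs still supply deflations from, and inflations into, $\omega$, but the resulting sequences need not be conflations of the restricted structure on $\mathcal{M}$, so $\mathcal{M}$ would fail to have enough projective-injectives and the Frobenius argument would break down. Heredity is precisely what upgrades the bare existence of these approximations into a genuine Frobenius structure, and everything else is bookkeeping with extension-closedness and the defining orthogonality of the cotorsion pairs.
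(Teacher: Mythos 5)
Your proposal is correct and follows essentially the same route as the paper: both identify $\mathcal{T}\cap\mathcal{U}$ as an extension-closed, hence Frobenius, subcategory with projective-injectives $\mathcal{S}\cap\mathcal{V}$, apply the Frobenius result (\cref{prop:Frobenius}, the extriangulated form of Happel's theorem) to get a triangulated stable category, and conclude via the equivalence of \cref{corollary: Hovey}. The paper states this in two sentences; you have simply supplied the details it leaves implicit (the equalities $\mathcal{S}\cap\mathcal{V}=\mathcal{S}\cap\mathcal{T}=\mathcal{U}\cap\mathcal{V}$, and the crosswise use of the two heredity hypotheses to keep the approximation conflations inside $\mathcal{T}\cap\mathcal{U}$), and all of these details are accurate.
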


\begin{proof}
 When $(\mathcal{S},\mathcal{T})$ and $(\mathcal{U},\mathcal{V})$ are hereditary, the extension-closed subcategory $\mathcal{T}\cap\,\mathcal{U}$ of $\mathscr{E}$ is Frobenius exact, with projective-injective objects $\mathcal{S}\cap\mathcal{V}$.
 Hence $(\mathcal{T}\cap\,\mathcal{U})\, /\, [\mathcal{S}\cap\mathcal{V}]$ is triangulated, and so is $\mathscr{C}[\mathbb{W}^{-1}]$ by \cref{corollary: Hovey}.
\end{proof}

We give a generalisation of this result, which seems to be new already in the case of exact categories.

\begin{theorem}\label{theorem: homotopy triangulated}\emph{\cite[Theorem 6.20]{NakaokaPalu}}
For any weakly idempotent complete extriangulated category $\mathscr{C}$ and any Hovey twin cotorsion pairs $(\mathcal{S},\mathcal{T}),(\mathcal{U},\mathcal{V})$ on $\mathscr{C}$, the associated homotopy category $\mathscr{C}[\mathbb{W}^{-1}]$ is triangulated. 
\end{theorem}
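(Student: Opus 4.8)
The plan is to reduce the statement, via \cref{corollary: Hovey}, to a structural property of the subquotient, and then invoke \cref{prop:Frobenius}. Indeed, \cref{corollary: Hovey} already identifies $\mathscr{C}[\mathbb{W}^{-1}]$ with the ideal quotient $(\mathcal{T}\cap\,\mathcal{U})/[\mathcal{S}\cap\mathcal{V}]$, while \cref{prop:Frobenius} asserts that the stable category of \emph{any} Frobenius extriangulated category is triangulated, with no exactness assumption. So it suffices to show that $\mathcal{T}\cap\,\mathcal{U}$, with the extriangulated structure inherited from $\mathscr{C}$, is Frobenius with subcategory of projective-injectives exactly $\mathcal{S}\cap\mathcal{V}$. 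This is Gillespie's hereditary proposition above \emph{with the heredity hypothesis removed}; the conceptual reason the generalisation is possible is precisely that \cref{prop:Frobenius} needs only a Frobenius \emph{extriangulated} structure, not a Frobenius exact one.

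First I would dispatch the formal parts. Writing $\mathcal{U}={}^{\perp}\mathcal{V}$ and $\mathcal{T}=\mathcal{S}^{\perp}$, both are extension-closed: applying the contravariant (resp. covariant) long exact sequence of \cref{prop:extricat long exact sequences} to a conflation whose outer terms lie in $\mathcal{U}$ (resp. $\mathcal{T}$) forces the middle term into $\mathcal{U}$ (resp. $\mathcal{T}$). Hence $\mathcal{T}\cap\,\mathcal{U}$ is extension-closed and inherits an extriangulated structure. Next, the \emph{concentric} condition $\mathcal{U}\cap\mathcal{V}=\mathcal{S}\cap\mathcal{T}$ together with the twin inclusions $\mathcal{S}\subseteq\mathcal{U}$ and $\mathcal{V}\subseteq\mathcal{T}$ collapses all three intersections into a single subcategory $\omega:=\mathcal{S}\cap\mathcal{V}=\mathcal{S}\cap\mathcal{T}=\mathcal{U}\cap\mathcal{V}$. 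Any $P\in\omega$ is projective in $\mathcal{T}\cap\,\mathcal{U}$ since $P\in\mathcal{S}$ kills $\mathbb{E}(P,-)$ on $\mathcal{T}$, and injective since $P\in\mathcal{V}$ kills $\mathbb{E}(-,P)$ on $\mathcal{U}$. Conversely, \emph{once enough projectives are available}, a projective $P$ sits in a conflation $K\infl Q\defl P$ with $Q\in\omega$ which splits by projectivity, so $P$ is a summand of $Q\in\omega$; as $\mathcal{S}$ and $\mathcal{V}$ are summand-closed, $P\in\omega$, and dually for injectives. Thus the projective-injectives are \emph{exactly} $\omega$, and Frobenius-ness reduces to the existence of enough projectives and enough injectives.

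The hard part, and the main obstacle, is producing these resolutions inside $\mathcal{T}\cap\,\mathcal{U}$. The naive attempt coresolves $X\in\mathcal{T}\cap\,\mathcal{U}$ by completeness of one cotorsion pair: the $(\mathcal{U},\mathcal{V})$-coresolution $X\infl V^{X}\defl U^{X}$ has $V^{X}\in\omega$ (its middle term lands in $\mathcal{U}\cap\mathcal{V}$ by extension-closedness), which supplies the desired injective object --- but the cokernel $U^{X}$ only lies in $\mathcal{U}$, and the six-term sequence of \cref{prop:extricat long exact sequences} shows that $\mathbb{E}(S,U^{X})$ need not vanish for $S\in\mathcal{S}$, so $U^{X}\notin\mathcal{T}$ in general. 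In Gillespie's setting heredity of $(\mathcal{S},\mathcal{T})$ is exactly what drags this cokernel back into $\mathcal{T}$; here heredity is gone, and the \emph{Hovey} condition must take its place. Recall it is equivalent to: for every object, $0\to X\in\mathbb{W}$ if and only if $X\to 0\in\mathbb{W}$. Concretely, I would splice the $(\mathcal{S},\mathcal{T})$- and $(\mathcal{U},\mathcal{V})$-coresolutions of $X$ into a $3\times 3$ diagram via the extriangulated nine lemma (\cref{prop:9lemma}) and the homotopy pushout of \cref{prop:htpPO}, and use the Hovey condition to force the newly created corner object into $\omega$ while the residual cokernel lands in $\mathcal{T}\cap\,\mathcal{U}$; the dual construction yields enough projectives. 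This is where \emph{weak idempotent completeness} is indispensable: it is the running hypothesis of \cref{prop:9lemma}, it guarantees (via the equivalent characterisations of weak idempotent completeness) that the comparison maps are genuine inflations and deflations, and it makes \cref{prop:htpPO} applicable.

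With enough projectives and injectives in hand, $\mathcal{T}\cap\,\mathcal{U}$ is a Frobenius extriangulated category with projective-injectives $\omega=\mathcal{S}\cap\mathcal{V}$, so by \cref{prop:Frobenius} its stable category $(\mathcal{T}\cap\,\mathcal{U})/[\mathcal{S}\cap\mathcal{V}]$ is triangulated; by \cref{corollary: Hovey} this is $\mathscr{C}[\mathbb{W}^{-1}]$, which completes the argument. I note that the final step admits an alternative justification: since $\mathscr{C}$, hence $\mathcal{T}\cap\,\mathcal{U}$, is weakly idempotent complete by hypothesis, the homotopical-algebra proof underlying \cref{prop:Frobenius} applies directly and could replace the Happel-style construction.
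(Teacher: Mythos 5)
Your reduction via \cref{corollary: Hovey} and \cref{prop:Frobenius}, and the formal parts of your argument (extension-closure of $\mathcal{T}\cap\mathcal{U}$, the collapse $\mathcal{S}\cap\mathcal{V}=\mathcal{S}\cap\mathcal{T}=\mathcal{U}\cap\mathcal{V}=:\omega$, and the fact that $\omega$ consists of projective-injective objects of $\mathcal{T}\cap\mathcal{U}$), are all correct. But the entire content of the theorem is concentrated in the one step you do not prove: that $\mathcal{T}\cap\mathcal{U}$ has enough projectives and enough injectives for its inherited extriangulated structure, with (co)resolutions by objects of $\omega$ whose (co)syzygies stay inside $\mathcal{T}\cap\mathcal{U}$. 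Your proposed fix --- splicing the two coresolutions via \cref{prop:9lemma} and \cref{prop:htpPO} and letting the Hovey condition ``force the corner into $\omega$'' --- is a hope, not an argument, and when one actually runs it the memberships come out systematically wrong. Apply \cref{prop:htpPO} to the two coresolutions $X\infl V^X\defl U^X$ and $X\infl T'\defl S'$ (with $V^X\in\omega$, $U^X\in\mathcal{U}$, $T'\in\mathcal{T}\cap\mathcal{U}$, $S'\in\mathcal{S}$): the resulting conflations are $T'\infl E\defl U^X$, $V^X\infl E\defl S'$ and $X\infl V^X\oplus T'\defl E$, and since $V^X\in\omega\subseteq\mathcal{S}$, $S'\in\mathcal{S}$ and $\mathcal{S}$ is extension-closed, the corner $E$ lies in $\mathcal{S}$ --- not in $\omega$, and with no reason to lie in $\mathcal{T}$. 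None of the three conflations has the required shape (middle term in $\omega$, end terms in $\mathcal{T}\cap\mathcal{U}$). The genuine obstruction is the one you yourself identify and then wave away: to repair $U^X\notin\mathcal{T}$ one must lift $\delta\in\mathbb{E}(U^X,X)$ along the fibrant replacement $U^X\infl T_0\defl S_0$, i.e.\ one needs surjectivity of $\mathbb{E}(T_0,X)\to\mathbb{E}(U^X,X)$; but the exact sequence of \cref{prop:extricat long exact sequences} stops exactly there, and the missing next term is a second extension group of $S_0$ --- precisely what heredity, and nothing in the Hovey condition, kills. The Hovey condition is a statement about the class of trivial objects, $\operatorname{Cone}(\mathcal{V},\mathcal{S})=\operatorname{CoCone}(\mathcal{V},\mathcal{S})$; in the correspondence it is what yields the two-out-of-three property for $\mathbb{W}$, and it gives no control whatsoever over cokernels of inflations between objects of $\mathcal{T}$.

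There is also strong evidence that this route cannot be completed at all, not merely that you left a hole. The paper stresses that the theorem is ``new already in the case of exact categories''; but in the exact case your proposed reduction --- Frobenius subcategory, then Happel, then \cref{corollary: Hovey} --- is verbatim Gillespie's argument, whose only non-formal input is heredity. If Frobenius-ness of $\mathcal{T}\cap\mathcal{U}$ with projective-injectives $\omega$ survived the removal of heredity, the exact-category case would have been known long before \cite{NakaokaPalu}. Moreover, in \cite{NakaokaPalu} the logical order is the reverse of yours: the homotopical proof that stable categories of Frobenius extriangulated categories are triangulated (\cite[Corollary 7.4]{NakaokaPalu}) is \emph{deduced from} Theorem 6.20 by equipping a Frobenius category with a canonical Hovey twin cotorsion pair --- which also makes the ``alternative justification'' in your last sentence circular. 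The cited proof does not exhibit $\mathcal{T}\cap\mathcal{U}$ as Frobenius; it constructs the triangulation on $\mathscr{C}[\mathbb{W}^{-1}]\simeq(\mathcal{T}\cap\mathcal{U})/[\omega]$ directly: the suspension of $X$ is obtained by the two-step process above (cokernel of $X\infl V^X$, then fibrant replacement), the Hovey condition is what makes this an equivalence, triangles are induced by conflations, and the octahedral axiom is verified using (ET4)-type weak bicartesian squares in place of the missing (co)limits, as the remark following \cref{theorem: homotopy triangulated} indicates.
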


\begin{remark}
Under the assumptions of \cref{theorem: homotopy triangulated}, the model structure on $\mathscr{C}$ is not stable \emph{stricto sensu} because $\mathscr{C}$ has too few limits and colimits.
However, the equivalent of the octahedral axiom gives specific choices of weak bicartesian squares that compensate for the lack of (co)limits.
\end{remark}

A specific case of \cref{theorem: homotopy triangulated} is \cref{prop:Frobenius} that generalises \cref{theorem: Happel,theorem: IyamaYoshino} giving homotopical content to these Theorems.

We conclude this section by remarking that \cref{corollary: Hovey} also generalises the following result, which is nicely motivated in the introduction of~\cite{IyamaYang}.

\begin{theorem}[{\cite[Corollary 6.13]{Nakaoka-simultaneous} and \cite[Theorem 1.1]{IyamaYang}}]
\label{thm:Nakaoka-IyamaYang}
 Let $\mathscr{T}$ be a triangulated category, $\mathscr{S}$ be a thick subcategory and $(\mathcal{A},\mathcal{A}^{\perp_1})$ and $(^{\perp_1}\mathcal{B},\mathcal{B})$ be complete cotorsion pairs in $\mathscr{T}$ with $\mathcal{A},\mathcal{B}\subseteq\mathscr{S}$.
 Assume that $(\mathcal{A},\mathcal{B})$ is a complete cotorsion pair in $\mathscr{S}$.
 Then there is an equivalence of categories
 $\left(\mathcal{A}^{\perp_1}\cap\,^{\perp_1}\mathcal{B}\right)/[\mathcal{A}\cap\mathcal{B}] \xrightarrow{\simeq} \mathscr{T}/\mathscr{S}$.
 \end{theorem}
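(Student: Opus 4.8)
The plan is to realise this statement as a direct instance of \cref{corollary: Hovey}, applied to the triangulated (hence weakly idempotent complete) extriangulated category $\mathscr{T}$, for a well-chosen pair of cotorsion pairs. Matching the notation of \cref{corollary: Hovey}, I would take $(\mathcal{S},\mathcal{T})=(\mathcal{A},\mathcal{A}^{\perp_1})$ and $(\mathcal{U},\mathcal{V})=({}^{\perp_1}\mathcal{B},\mathcal{B})$, so that $\mathcal{T}\cap\mathcal{U}=\mathcal{A}^{\perp_1}\cap{}^{\perp_1}\mathcal{B}$ and $\mathcal{S}\cap\mathcal{V}=\mathcal{A}\cap\mathcal{B}$ become exactly the numerator and the ideal on the left-hand side. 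Everything then reduces to two things: first, checking that these two cotorsion pairs are \emph{Hovey twin}; and second, identifying the resulting localisation $\mathscr{T}[\mathbb{W}^{-1}]$ with the Verdier quotient $\mathscr{T}/\mathscr{S}$.

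For the twin condition, the hypothesis that $(\mathcal{A},\mathcal{B})$ is a cotorsion pair in $\mathscr{S}$ forces $\operatorname{Ext}^1(\mathcal{A},\mathcal{B})=0$, i.e. $\mathcal{A}\subseteq{}^{\perp_1}\mathcal{B}$ and $\mathcal{B}\subseteq\mathcal{A}^{\perp_1}$. For the concentric condition I would show that both cores equal $\mathcal{A}\cap\mathcal{B}$: if $X\in\mathcal{A}\cap\mathcal{A}^{\perp_1}$ then $X\in\mathcal{A}\subseteq\mathscr{S}$ satisfies $\operatorname{Ext}^1(\mathcal{A},X)=0$, hence $X\in\mathcal{B}$ since $(\mathcal{A},\mathcal{B})$ is a cotorsion pair in $\mathscr{S}$; conversely $\mathcal{A}\cap\mathcal{B}\subseteq\mathcal{A}\cap\mathcal{A}^{\perp_1}$ by the twin condition, and the symmetric computation handles ${}^{\perp_1}\mathcal{B}\cap\mathcal{B}$. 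Finally, since here $\mathcal{S}=\mathcal{A}$ and $\mathcal{V}=\mathcal{B}$, the Hovey condition asks that, for every $X$, a conflation $V\infl S\defl X$ with $V\in\mathcal{B}$, $S\in\mathcal{A}$ exists if and only if a conflation $X\infl V'\defl S'$ with $V'\in\mathcal{B}$, $S'\in\mathcal{A}$ exists. I would prove that each of these is equivalent to $X\in\mathscr{S}$: the ``only if'' uses that $\mathscr{S}$ is thick (hence a triangulated subcategory containing $\mathcal{A}$ and $\mathcal{B}$), and the ``if'' is precisely the existence of the two approximation conflations guaranteed by completeness of $(\mathcal{A},\mathcal{B})$ in $\mathscr{S}$. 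Being both equivalent to $X\in\mathscr{S}$, the two conditions are equivalent to each other.

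It then remains to identify $\mathbb{W}$ with the class $\Sigma_{\mathscr{S}}=\{f\mid \operatorname{cone}(f)\in\mathscr{S}\}$, whose localisation is by definition the Verdier quotient $\mathscr{T}/\mathscr{S}$. By \cref{proposition: CP to wfs}, and because every morphism of $\mathscr{T}$ is both an inflation and a deflation, $\mathbb{W}=w\mathcal{F}\!\mathit{ib}\circ w\mathcal{C}\!\mathit{of}$ consists of the composites $f=p\circ i$ with $\operatorname{cone}(i)\in\mathcal{A}$ and $\operatorname{fiber}(p)\in\mathcal{B}$. The inclusion $\mathbb{W}\subseteq\Sigma_{\mathscr{S}}$ is then immediate from the octahedral axiom, since $\operatorname{cone}(f)$ sits in a triangle between $\operatorname{cone}(i)\in\mathcal{A}$ and $\operatorname{cone}(p)\in\Sigma\mathcal{B}$, both of which lie in the thick subcategory $\mathscr{S}$.

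The reverse inclusion is the step I expect to be the main obstacle. Given $f\colon X\to Y$ with $C:=\operatorname{cone}(f)\in\mathscr{S}$, I would apply completeness of $(\mathcal{A},\mathcal{B})$ in $\mathscr{S}$ to $C$ to obtain a conflation $B_0\infl A_0\overset{\alpha}{\defl}C$ with $A_0\in\mathcal{A}$, $B_0\in\mathcal{B}$, and then form the homotopy pullback (base change) of $Y\overset{g}{\to}C\overset{\alpha}{\leftarrow}A_0$, where $g$ is the second map of the triangle on $f$. Base change produces an object $W$ together with a morphism of triangles $(\operatorname{id}_X,p,\alpha)$ from $X\overset{i}{\to}W\to A_0\to\Sigma X$ to $X\overset{f}{\to}Y\overset{g}{\to}C\to\Sigma X$; thus $p\circ i=f$, the cone of $i$ is $A_0\in\mathcal{A}$, and the fibre of $p$ equals the fibre of $\alpha$, namely $B_0\in\mathcal{B}$. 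Hence $i\in w\mathcal{C}\!\mathit{of}$ and $p\in w\mathcal{F}\!\mathit{ib}$, so $f\in\mathbb{W}$. With $\mathbb{W}=\Sigma_{\mathscr{S}}$ established, \cref{corollary: Hovey} yields the equivalence $(\mathcal{A}^{\perp_1}\cap{}^{\perp_1}\mathcal{B})/[\mathcal{A}\cap\mathcal{B}]\xrightarrow{\simeq}\mathscr{T}[\mathbb{W}^{-1}]=\mathscr{T}/\mathscr{S}$, as desired.
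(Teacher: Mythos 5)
Your proposal is correct and takes exactly the route the paper itself takes: the statement is realised as an instance of \cref{corollary: Hovey}, applied in the triangulated (hence weakly idempotent complete) category $\mathscr{T}$ to the pair of cotorsion pairs $(\mathcal{A},\mathcal{A}^{\perp_1})$ and $({}^{\perp_1}\mathcal{B},\mathcal{B})$. The paper's own argument is only a two-line sketch --- it asserts that the hypotheses amount to these being Hovey twin cotorsion pairs with $\mathscr{S}=\operatorname{Cone}(\mathcal{A},\mathcal{B})$ thick, and leaves implicit the identification of the localisation $\mathscr{T}[\mathbb{W}^{-1}]$ with the Verdier quotient $\mathscr{T}/\mathscr{S}$ --- so your verifications of the twin, concentric and Hovey conditions, and your proof that $\mathbb{W}$ coincides with the class of morphisms whose cone lies in $\mathscr{S}$ (via the homotopy pullback argument), are precisely the details the paper omits.
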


This result (stated here with cotorsion pairs rather than with torsion pairs as in \cite{IyamaYang}) is equivalent to~\cite[Corollary 6.13]{Nakaoka-simultaneous} and also follows from \cref{corollary: Hovey}.
Indeed, the assumptions of \cref{thm:Nakaoka-IyamaYang} can be equivalently expressed by asking $(\mathcal{A},\mathcal{A}^{\perp_1})$ and $(^{\perp_1}\mathcal{B},\mathcal{B})$ to be Hovey twin cotorsion pairs and letting $\mathscr{S}$ be Cone$(\mathcal{A},\mathcal{B}) = \mathcal{B}\ast\Sigma\mathcal{A}$, which is a thick subcategory of $\mathscr{T}$ by~\cite[Proposition 6.8]{Nakaoka-simultaneous} (this also holds if $\mathscr{T}$ is extriangulated by~\cite[Proposition 5.3]{NakaokaPalu}).

%%%%%%%%%%%%%%%%%%%%%%%%%%%%%%%%%%%%%%%%%%%%%%%%%%%%%%%%%%
\section{Polytopal realisations of g-vector fans}
%%%%%%%%%%%%%%%%%%%%%%%%%%%%%%%%%%%%%%%%%%%%%%%%%%%%%%%%%%
\label{section:g-vectors}

%%%%%%%%%%%
\subsection{ABHY's construction}
%%%%%%%%%%%

Motivated by the theory of scattering amplitudes in theoretical physics, Arkani-Hamed, Bai, He and Yan~\cite{Arkani-HamedBaiHeYan} introduced a new realization of the classical associahedron.
Fix a family of positive real numbers $\underline{c}=(c_{i\,j})$, for $2\leq i\leq j-2\leq n+1$, and fix variables (coordinates) $q_{i\,j}$ for $1\leq i \leq j-2\leq n+1$, but $(1,n+3)$.
Write $N=\frac{n(n+3)}{2}$, and consider the affine subspace $E_{\underline{c}}$ of $\rb^N$ given by the following equations:
\[
 q_{i\,j}+q_{i+1\,j+1} = q_{i\,j+1}+q_{i+1\,j}+c_{i+1\,j+1}
\]
for each $1\leq i\leq j-2\leq n$.
Let
\begin{eqnarray*}
 \pi : \rb^N & \longrightarrow & \rb^n \\
 (q_{i\,j}) & \longmapsto & (q_{2,n+3},\ldots,q_{n+1\,n+3})
\end{eqnarray*}
and define $\asso_{\underline{c}} = \pi\left(E_{\underline{c}}\cap \rb_{>0}^N\right)$.

For $n=2$, $E_{\underline{c}}$ is thus given by the system of equations:
\[
E_{\underline{c}} :  \left\{ \begin{array}{ccl}
q_{1\,3}+q_{2\,4} & = & q_{1\,4} + c_{2\,4} \hspace{1cm} (1) \\
q_{1\,4}+q_{2\,5} & = & q_{2\,4} + c_{2\,5} \hspace{1cm} (2) \\
q_{2\,4}+q_{3\,5} & = & q_{2\,5} + c_{3\,5} \hspace{1cm} (3)
\end{array} \right.
\]
which is equivalent to
\[
E_{\underline{c}} :  \left\{ \begin{array}{ccl}
q_{1\,3} & = & c_{2\,4}  + c_{2\,5} - q_{2\,5} \hspace{1cm} (1)+(2) \\
q_{1\,4} & = & c_{2\,5} + c_{3\,5} - q_{3\,5} \hspace{1cm} (2)+(3) \\
q_{2\,4} & = & c_{3\,5} + q_{2\,5} - q_{3\,5} \hspace{1cm} (3)
\end{array} \right.
\]
Intersecting it with the positive orthant and only remembering the last two coordinates, we obtain
\begin{eqnarray*}
 \asso_{\underline{c}} & = & \{(q_{2\,5},q_{3\,5})\in\rb_{>0}^2 \;|\; q_{1\,3} >0, q_{1\,4} >0, q_{2\,4} >0\} \\
 & = & \{(q_{2\,5},q_{3\,5})\in\rb_{>0}^2 \;|\; q_{2\,5} < c_{2\,4}  + c_{2\,5},\; q_{3\,5} < c_{2\,5} + c_{3\,5},\; q_{3\,5} - q_{2\,5} < c_{3\,5} \}.
\end{eqnarray*}

\begin{figure}[h]
\begin{center}
 \capstart
  \includegraphics[scale=.7]{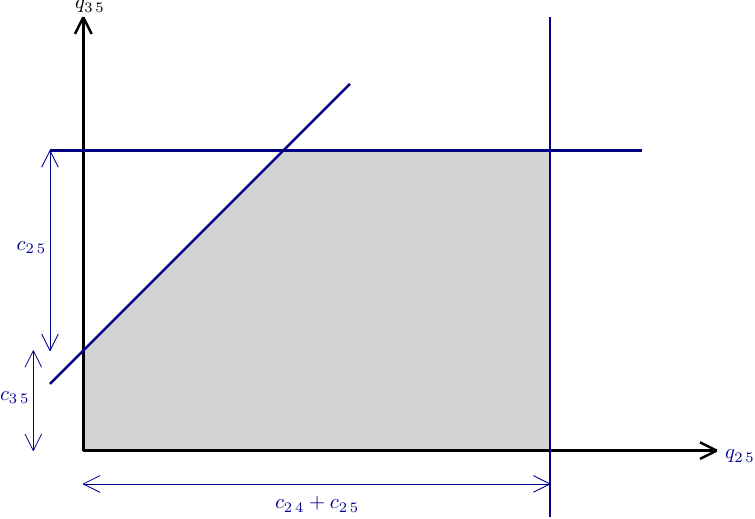}
  \caption{ABHY's construction for $n=2$ gives the associahedron of type $A_2$.}
  \label{fig:Asso_2}
\end{center}
\end{figure}

\begin{theorem}[{\cite{Arkani-HamedBaiHeYan}}]
 For all positive $\underline{c}$, $\asso_{\underline{c}}$ is an associahedron of type $A_n$.
\end{theorem}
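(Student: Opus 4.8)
The plan is to reduce the statement to the claim that the normal fan of $\asso_{\underline c}$ is the $g$-vector fan of type $A_n$, which is classically the normal fan of an associahedron. First I would exploit the fact that the $\frac{n(n+1)}{2}$ defining equations of $E_{\underline c}$ are independent, so that $E_{\underline c}$ is an $n$-dimensional affine subspace and $\pi|_{E_{\underline c}}$ is an affine isomorphism onto $\rb^n$. Coordinatising $E_{\underline c}$ by the free variables $\mathbf q=(q_{2\,n+3},\dots,q_{n+1\,n+3})$, the recurrence $q_{i\,j}+q_{i+1\,j+1}-q_{i\,j+1}-q_{i+1\,j}=c_{i+1\,j+1}$ is a discrete mixed second difference that telescopes: summing it over the appropriate rectangle expresses each $q_{i\,j}$ as an affine function $q_{i\,j}(\mathbf q)=\ell_{i\,j}(\mathbf q)+\gamma_{i\,j}(\underline c)$, where the linear part $\ell_{i\,j}$ is independent of $\underline c$ and $\gamma_{i\,j}$ is a nonnegative integer combination of the $c_{k\,l}$. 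Since the $N=\frac{n(n+3)}{2}$ indices $(i,j)$ are in bijection with the diagonals of the $(n+3)$-gon, this realises $\asso_{\underline c}$ as the polyhedron $\{\mathbf q\in\rb^n\mid \langle \mathbf g_{i\,j},\mathbf q\rangle\le \gamma_{i\,j}(\underline c)\text{ for every diagonal }(i,j)\}$, with one inequality per diagonal and outer normal $\mathbf g_{i\,j}:=-\nabla\ell_{i\,j}$.

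The second step is to identify the normals. I would check directly from the telescoped formula that the vectors $\mathbf g_{i\,j}$ coincide with the $g$-vectors of the type $A_n$ cluster variables attached to the diagonals, computed in the acyclic seed given by the relevant fan triangulation; equivalently that $\{\mathbf g_{i\,j}\}$ is exactly the set of rays of the $g$-vector fan and that the normals indexed by any triangulation form a basis of $\rb^n$. Because the $g$-vector fan of finite type is a complete simplicial fan whose maximal cones are indexed by triangulations, and is the normal fan of some associahedron, it then suffices to prove that the right-hand sides $\gamma_{i\,j}(\underline c)$ are \emph{compatible} with this fan, i.e.\ that $\asso_{\underline c}$ is a polytope whose normal fan is precisely the $g$-vector fan.

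For this last and decisive step I would use McMullen's type cone (the cone of all height vectors producing a polytope with the prescribed normal fan; see \cref{def:typeCone}): it is cut out by one wall-crossing inequality per interior flip, and I must show that every such inequality is \emph{strictly} satisfied by the heights $(\gamma_{i\,j}(\underline c))$. Concretely, for the flip exchanging the two diagonals of a quadrilateral with vertices $a<b<c<d$, the corresponding wall-crossing inequality compares the heights $\gamma$ of the two crossing diagonals with those of the surrounding ones; after substituting the telescoped formulas it collapses to a single positivity statement of the form $c_{k\,l}>0$. Since every $c_{i\,j}$ is assumed positive, all wall-crossing inequalities hold strictly, so $(\gamma_{i\,j}(\underline c))$ lies in the interior of the type cone for all positive $\underline c$. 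Together with the observation that $\asso_{\underline c}$ is bounded (the $\mathbf g_{i\,j}$ positively span $\rb^n$), this yields that $\asso_{\underline c}$ is a simple polytope whose normal fan is the $g$-vector fan of type $A_n$, hence an associahedron.

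The main obstacle I anticipate is precisely this verification that the wall-crossing inequalities reduce to positivity of the constants $c_{i\,j}$: it requires matching the purely linear-algebraic telescoping of the $q_{i\,j}$ with the combinatorics of flips of triangulations, and checking that no spurious facet or unbounded direction appears (equivalently, that every diagonal genuinely defines a facet and that the heights sit in the \emph{open} type cone). An alternative, more hands-on route avoiding the type cone is induction on $n$: show each inequality $q_{i\,j}\ge 0$ is facet-defining and that the facet $\{q_{i\,j}=0\}$ factors as a product $\asso_{\underline c'}\times\asso_{\underline c''}$ of two lower-dimensional ABHY realisations corresponding to the two subpolygons cut off by $(i,j)$, then conclude by the standard recursive characterisation of the associahedron; there the crux becomes verifying that the restricted equations are again of ABHY type with positive constants.
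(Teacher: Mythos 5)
Your proposal is correct in outline and shares the paper's central tool, McMullen's type cone, but it is organised around a genuinely different key step. You verify, for the specific ABHY heights $\gamma_{i\,j}(\underline{c})$, \emph{membership} in the type cone --- one wall-crossing inequality per flip --- and conclude via the lemma of \cite{ChapotonFominZelevinsky} recalled before \cref{theorem:PPPPpartI}. The paper instead proves a structural statement about the fan itself: the $g$-vector fan has the unique exchange relation property and its type cone is \emph{simplicial}, i.e.\ has only $N-n$ facets (\cref{theorem:PPPPpartII}); by \cref{theorem:PPPPpartI} this identifies ABHY's construction with the parametrisation of the interior of the type cone by the positive orthant. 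That structural fact is obtained representation-theoretically: exchange relations are categorified by conflations in the relative extriangulated structure $(\mathscr{C}_\Delta,\Delta_T)$ on the cluster category, the facet-defining inequalities correspond to the $N-n$ Auslander--Reiten sequences of that structure, and $\operatorname{Ker}\mathfrak{g}$ is generated \emph{as a monoid} by the AR relations (\cite[Corollary 3.9]{PadrolPaluPilaudPlamondon}). What your route buys is elementarity: no cluster categories, only telescoping and flip combinatorics. What the paper's route buys is the stronger conclusion that the polytopes $Q_c$ exhaust \emph{all} polytopal realisations of the fan, and a method that transports uniformly to arbitrary initial seeds, non-simply-laced types and gentle algebras.

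Two caveats on your decisive step. First, the wall-crossing inequality does not in general collapse to a single statement $c_{k\,l}>0$: already in type $A_2$, the flip exchanging the diagonals $(1,3)$ and $(2,5)$ has linear dependence $g_{1\,3}+g_{2\,5}=0$, and evaluating at the ABHY heights gives $c_{2\,4}+c_{2\,5}>0$, a sum of two constants. The correct claim is that each exchange relation, evaluated at the heights, is a nonzero \emph{nonnegative} integer combination of the $c_{k\,l}$; proving this for every flip is precisely the nontrivial combinatorial content that the paper extracts from the monoid generation of $\operatorname{Ker}\mathfrak{g}$ by AR relations, so the ``main obstacle'' you flag is where essentially all of the work lies, and your proposal leaves it open. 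Second, your identification of the outer normals $-\nabla\ell_{i\,j}$ with the $g$-vectors, and your appeal to completeness and simpliciality of the $g$-vector fan, import classical facts that must be cited or reproved; they are legitimate black boxes, but note that the paper's framework is designed exactly so that these facts and the positivity above all come out of the same categorical computation.
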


\begin{question}
 Is it possible to understand this result by using representation theory?
\end{question}

That is precisely what a team from the LaCIM did, allowing them to generalise the result to other Dykin types.

%%%%%%%%%%%
\subsection{Representation theoretic interpretation}
%%%%%%%%%%%

The team from the LaCIM alluded to above was formed by:
V\'eronique Bazier-Matte, Nathan Chapelier-Laget, Guillaume Douville, Kaveh Mousavand, Hugh Thomas and Emine Y{\i}ld{\i}r{\i}m~\cite{BazierMatteChapelierLagetDouvilleMousavandThomasYildirim}.
This subsection is inspired from Hugh Thomas' talk at the RIMS in 2019.

Let $Q$ be a Dynkin quiver of type $A,D$ or $E$.
Denote by $\mathcal{D}_Q$ the full subcategory $\operatorname{add} (\{M\in\operatorname{mod}\field Q, M \text{indecomposable}\}\cup \{I_j[-1], j\in Q_0\})$  of $\mathcal{D}^{\text{b}}(\field Q)$.
For each $M\in\operatorname{ind}\mathcal{D}_Q$, fix a coordinate $q_M$.
For each $M\in\operatorname{ind}\mathbb{C}Q$ fix some positive real number $c_M>0$.
Let $n=|Q_0|$ and $N=|\operatorname{ind}\mathcal{D}_Q|$.

\begin{example}
 If $Q=1\leftarrow 2$, then $\operatorname{ind}\mathbb{C}Q$ contains the following three isomorphism classes.

$P_1=S_1=\mathbb{C}\leftarrow 0$, $P_2=I_1=\mathbb{C}\stackrel{1}{\leftarrow}\mathbb{C}$, and $S_2=I_2= 0\leftarrow\mathbb{C}$.
The Auslander--Reiten quiver of the subcategory $\mathcal{D}_Q$ is given by:

% https://q.uiver.app/#q=WzAsMTMsWzAsMSwiSV8xWy0xXSJdLFsxLDAsIklfMlstMV0iXSxbMiwxLCJQXzEiXSxbMywwLCJJXzEiXSxbNCwxLCJJXzIiXSxbNywxLCJxX3sxXFwsM30iXSxbOCwwLCJxX3sxXFwsNH0iXSxbOSwxLCJxX3syXFwsNH0iXSxbMTAsMCwicV97MlxcLDV9Il0sWzExLDEsInFfezNcXCw1fSJdLFs4LDEsImNfezJcXCw0fSJdLFs5LDAsImNfezJcXCw1fSJdLFsxMCwxLCJjX3szXFwsNX0iXSxbMCwxXSxbMSwyXSxbMiwzXSxbMyw0XSxbNSw2XSxbNiw3XSxbNyw4XSxbOCw5XV0=
\[
\adjustbox{scale=.85,center}{%
\begin{tikzcd}[scale=.5]
	& {I_2[-1]} && {I_1} &&&&& {q_{1\,4}} & {c_{2\,5}} & {q_{2\,5}} \\
	{I_1[-1]} && {P_1} && {I_2} &&& {q_{1\,3}} & {c_{2\,4}} & {q_{2\,4}} & {c_{3\,5}} & {q_{3\,5}}
	\arrow[from=2-1, to=1-2]
	\arrow[from=1-2, to=2-3]
	\arrow[from=2-3, to=1-4]
	\arrow[from=1-4, to=2-5]
	\arrow[from=2-8, to=1-9]
	\arrow[from=1-9, to=2-10]
	\arrow[from=2-10, to=1-11]
	\arrow[from=1-11, to=2-12]
\end{tikzcd}
}
\]
\end{example}

For $c=(c_M)_{M\in\operatorname{ind}\mathcal{D}_Q}$, define $\mathbb{E}_c$ to be the affine subspace of $\mathbb{R}^N$ given by the $N-n =|\operatorname{ind}\mathbb{C}Q|$ equations
% $q+t=r+s+c$ for each mesh
% % https://q.uiver.app/#q=WzAsNSxbMCwxLCJxIl0sWzEsMCwiciJdLFsyLDEsInQiXSxbMSwyLCJzIl0sWzEsMSwiYyJdLFswLDFdLFsxLDJdLFswLDNdLFszLDJdXQ==
% \[
% \adjustbox{scale=.85,center}{%
% \begin{tikzcd}
% 	& r \\
% 	q & c & t \\
% 	& s
% 	\arrow[from=2-1, to=1-2]
% 	\arrow[from=1-2, to=2-3]
% 	\arrow[from=2-1, to=3-2]
% 	\arrow[from=3-2, to=2-3]
% \end{tikzcd}
% }
% \]
$q+t=r_1+r_2+r_3+c$ for each mesh
% https://q.uiver.app/#q=WzAsNixbMCwxLCJxIl0sWzEsMCwicl8xIl0sWzIsMSwidCJdLFsxLDIsInJfMiJdLFsxLDMsInJfMyJdLFsxLDEsImMiXSxbMCwxXSxbMSwyXSxbMCwzXSxbMywyXSxbMCw0XSxbNCwyXV0=
\[
\adjustbox{scale=.85,center}{%
\begin{tikzcd}
	& {r_1} \\
	q & c & t \\
	& {r_2} \\
	& {r_3}
	\arrow[from=2-1, to=1-2]
	\arrow[from=1-2, to=2-3]
	\arrow[from=2-1, to=3-2]
	\arrow[from=3-2, to=2-3]
	\arrow[from=2-1, to=4-2]
	\arrow[from=4-2, to=2-3]
\end{tikzcd}
}
\]
in the Auslander--Reiten quiver of $\mathcal{D}_Q$, with the convention that $r_3$ and $r_2$ are 0 if the mesh has fewer middle terms.
Consider the projection
\begin{eqnarray*}
 \pi : \mathbb{R}^N & \longrightarrow & \mathbb{R}^n \\
 (q_M)_{M\in\operatorname{ind}\mathcal{D}_Q} & \longmapsto & (q_{I_j})_{j\in Q_0}
\end{eqnarray*}
and define $\mathbb{A}_c$ by $\mathbb{A}_c=\pi\left(\mathbb{E}_c\cap \mathbb{R}_{>0}^N\right)$.

\begin{theorem}[Bazier-Matte--Chapelier-Laget--Douville--Mousavand--Thomas--Y{\i}ld{\i}r{\i}m]
 For any positive $c$, $\mathbb{A}_c$ is a generalised associahedron of the same Dynkin type as $Q$.
\end{theorem}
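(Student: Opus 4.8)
The plan is to show that $\mathbb{A}_c$ is a polytopal realisation of the $g$-vector fan $\mathcal{F}_g(Q)$ of the cluster algebra of type $Q$; by definition this is exactly what it means for $\mathbb{A}_c$ to be a generalised associahedron of that type. Recall that the rays of $\mathcal{F}_g(Q)$ are the $g$-vectors $g_M$ of the objects $M\in\operatorname{ind}\mathcal{D}_Q$ and that its maximal cones are spanned by the $g$-vectors of the cluster-tilting objects of $\mathcal{D}_Q$. I would begin by checking that $\pi$ restricts to a linear isomorphism $\mathbb{E}_c\xrightarrow{\sim}\mathbb{R}^n$: the $N-n=|\operatorname{ind}\field Q|$ mesh equations are independent and express each non-injective coordinate $q_M$ as an affine function of the injective coordinates $(q_{I_j})_{j\in Q_0}$. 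Under this identification each constraint $q_M>0$ becomes an affine half-space of $\mathbb{R}^n$ whose inner normal is, after identifying coordinates with rays, the $g$-vector $g_M$, so that $\mathbb{A}_c$ is an intersection of $N$ half-spaces, one per ray of $\mathcal{F}_g(Q)$.

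The core of the argument is McMullen's type cone $\mathbb{TC}(\mathcal{F}_g(Q))$ (see \cref{def:typeCone}): a vector of facet heights defines a polytope whose normal fan is exactly $\mathcal{F}_g(Q)$ if and only if it lies in the interior of the type cone. The defining equations of $\mathbb{E}_c$ are precisely the wall-crossing (exchange) relations of $\mathcal{F}_g(Q)$, evaluated on the heights and set equal to the parameters $c$. Thus the whole statement reduces to two purely combinatorial properties of $\mathbb{TC}(\mathcal{F}_g(Q))$, namely \cref{theorem:PPPPpartI}: (1) the type cone is simplicial, and (2) its facet-defining inequalities are exactly the mesh inequalities $q+t-r_1-r_2-r_3>0$ read off the Auslander--Reiten quiver of $\mathcal{D}_Q$. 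Granting these, positivity of $c$ places the height vector strictly inside every facet, hence in the interior of $\mathbb{TC}(\mathcal{F}_g(Q))$, and $\mathbb{A}_c$ realises $\mathcal{F}_g(Q)$.

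Establishing the two combinatorial properties is where representation theory, and the extriangulated structure, enter, and this is the main obstacle. Each interior wall of $\mathcal{F}_g(Q)$ corresponds to the mutation of a cluster-tilting object, i.e. to an exchange conflation; using the relative extriangulated structure $\mathbb{E}_{\operatorname{add} T}$ of \cref{ex:relative} (with $T$ a cluster-tilting object made projective), these exchange conflations are controlled by the almost split sequences, and the associated linear relations among $g$-vectors are exactly the mesh relations. The content of property (2) is that every wall-crossing inequality is a \emph{nonnegative} combination of the finitely many mesh inequalities, so the meshes suffice to cut out the cone, and that the meshes are irredundant facets. I would prove sufficiency by inducting along paths in the exchange graph, decomposing a general wall-crossing into a telescoping sum of mesh relations, and prove irredundancy by exhibiting, for each mesh, a height vector violating only that inequality. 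Property (1) then follows because a simplicial cone of dimension $N-n$ has exactly $N-n$ facets, matching the number $|\operatorname{ind}\field Q|$ of meshes.

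Finally I would assemble the pieces: by \cref{theorem:PPPPpartI} the positive parameters $c$ yield an interior point of $\mathbb{TC}(\mathcal{F}_g(Q))$, so $\mathbb{A}_c$ is a polytope whose normal fan is $\mathcal{F}_g(Q)$ and whose facets are in bijection with $\operatorname{ind}\mathcal{D}_Q$ via the constraints $q_M>0$. This is precisely a generalised associahedron of the same Dynkin type as $Q$, as claimed, and the case of type $A_n$ recovers the original ABHY associahedron.
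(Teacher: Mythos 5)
Your overall architecture coincides with the paper's (the paper itself does not prove this theorem directly --- it cites it --- but it reproves and generalises it through \cref{theorem:PPPPpartI,theorem:PPPPpartII}): identify $\mathbb{A}_c$ with the polytope $Q_c$ cut out by the mesh equations, reduce the whole statement to two properties of the type cone of the $g$-vector fan (simpliciality, and the facets being exactly the mesh inequalities), and prove those properties by categorifying $g$-vectors in the cluster category $\mathscr{C}_\Delta$ endowed with the relative extriangulated structure of \cref{ex:relative} making $T$ projective. Up to that point your proposal is faithful to the paper's strategy, and you correctly isolate the key content, namely that every wall-crossing inequality is a nonnegative combination of the $N-n$ mesh inequalities. (One small imprecision: the defining equations of $\mathbb{E}_c$ are only the mesh relations, not ``the wall-crossing relations'' in general --- the point is precisely that the meshes form a tiny subset of all wall crossings which nevertheless suffices.)

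The gap is in the one step that carries all the mathematical weight: your proposed proof of that nonnegativity claim ``by inducting along paths in the exchange graph, decomposing a general wall-crossing into a telescoping sum of mesh relations'' is unsubstantiated, and it is doubtful it can be made to work as stated. Mesh relations are local in the Auslander--Reiten quiver, not in the exchange graph, and $g$-vectors transform piecewise-linearly (not linearly) under mutation, so relations attached to successive flips do not telescope; already in type $A$, the exchange relation of two crossing diagonals decomposes as a sum of mesh relations over a two-dimensional rectangle in the AR quiver, not along a path of flips. The paper's proof replaces this step by a Grothendieck-group argument: with $\mathfrak{g}\colon \operatorname{K}^{\mathrm{split}}_0(\mathscr{C}_\Delta)\to\operatorname{K}_0(\mathscr{C}_\Delta;T)$ the quotient map, one identifies $\operatorname{K}_0(\mathscr{C}_\Delta;T)$ with $\operatorname{K}^{\mathrm{split}}_0(\operatorname{add}T)$ via the index, notes that both exchange relations and mesh relations lie in $\operatorname{Ker}\mathfrak{g}$, and then invokes \cite[Corollary 3.9]{PadrolPaluPilaudPlamondon}: $\operatorname{Ker}\mathfrak{g}$ is generated \emph{as a monoid} by the AR-sequence relations of $(\mathscr{C}_\Delta,\Delta_T)$, i.e.\ by the meshes not starting at any $\Sigma T_i$. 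That monoid generation is exactly the nonnegative decomposition you need, and it is the genuinely hard representation-theoretic input; without it (or a worked-out substitute) your proof is incomplete. By contrast, your irredundancy step is essentially free: the $N-n$ mesh relations form a basis of $\operatorname{Ker}\mathfrak{g}$, so once redundancy of all other inequalities is established, linear independence already yields simpliciality.
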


This beautiful approach using representations of quivers has some drawbacks. First, the authors have to work with parameters $c_M$ in $\mathbb{Q}$, and deduce the result over $\mathbb{R}$ by density. Second, they have to work over an acyclic intial seed (this was later lifted in~\cite{BazierMatte-thesis}).
Third, non-simply laced Dynkin types are deduced from simply-laced types by a non-trivial folding technique~\cite{Arkani-HamedHeLam}.
More arguably, one might be tempted to say that their methods gives an interpretation of ABHY's construction allowing for more general versions but does not explain why ABHY's construction works.

\begin{figure}[h]
\begin{center}
 \capstart
  \includegraphics[scale=.7]{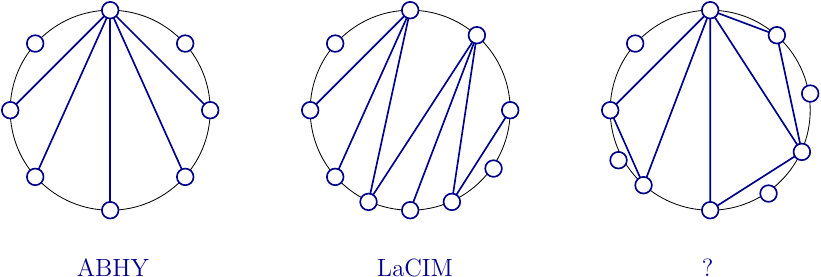}
  \caption{The cases treated by the two teams, in Dynkin type $A$: ABHY considers linear orientation (i.e. fan triangulations), LaCIM considers acyclic initial seeds (i.e. triangulations with no internal triangles). We aim at treating the general case (and at extending to dissections)}
  \label{fig:Triangulations}
\end{center}
\end{figure}

%%%%%%%%%%%
\subsection{Reformulation using McMullen's type cone}
%%%%%%%%%%%

Our aim is to give another interpretation of ABHY's construction that avoids the drawbacks mentionned above, while explaining why this construction is natural.
The tools we will make use of are McMullen's type cone and cluster categories.

% \cite{Ziegler-polytopes}

\subsubsection{Polyhedral cones}
A subset of~$\mathbb{R}^n$ is a \defn{polyhedral cone} if it is given as the non-negative span of finitely many vectors. A cone is \defn{simplicial} if it is generated by a set of linearly independent vectors.

Let $X \subset \mathbb{R}^n$. A \defn{supporting hyperplane} of $X$ is a hyperplane $H \subset \mathbb{R}^n$ that intersects $X$ and such that $X$ is entirely contained in one of the half-spaces defined by $H$.
The \defn{faces} of a cone $C$ are the intersections of $C$ with its supporting hyperplanes. Faces of dimension one are called \defn{rays} while faces of codimension one are called \defn{facets}.

\subsubsection{Polyhedral fans}
A \defn{polyhedral fan} is a collection~$\mathcal{F}$ of polyhedral cones satisfying the following two conditions:
\begin{itemize}
\item For any cone $C$ in $\mathcal{F}$, each face of $C$ is in $\mathcal{F}$.
\item For any two cones $C,C'\in\mathcal{F}$, $C\cap C'$ is both a face of $C$ and of $C'$.
\end{itemize}
Given a fan $\mathcal{F}$, we will always fix a generator for each ray, and we will denote cones by $\mathbb{R}_{\geq 0} \mathcal{R}$, where $\mathcal{R}$ is the set of the chosen generators of the rays of the cone.

A fan $\mathcal{F}$ is
\begin{itemize}
 \item \defn{simplicial} if all its cones are simplicial;
 \item \defn{complete} if $\mathbb{R}^n = \cup\mathcal{F}$;
 \item \defn{essential} if $\{0\}$ is one of its cones.
\end{itemize}

\subsubsection{Polytopes and their normal fans}
A \defn{polytope} is the convex hull of finitely many points in $\mathbb{R}^n$. It can be equivalently defined as the bounded intersection of finitely many closed affine halfspaces.
Similarly as for cones, we call \defn{faces} of a polytope its intersections with some supporting hyperplane and its \defn{facets} are its codimension one faces.
Its 0-dimensional and 1-dimensional faces are respectively called \defn{vertices} and  \defn{edges}.

% A polytope is \defn{simple} if each vertex is incident to $\dim(P)$ facets (or equivalently to $\dim(P)$ edges).

The (outer) \defn{normal cone} of a face $F$ of a polytope $P$ is the cone generated by the outer normal vectors of the facets of $P$ containing $F$.
% In other words, it is the cone of vectors~$\b{c}$ such that the linear form~${\b{x} \mapsto \dotprod{\b{c}}{\b{x}}}$ on~$P$ is maximized by all points of the face~$F$.
The \defn{normal fan} of $P$ is the collection of the normal cones of all its faces.
If a complete polyhedral fan $\mathcal{F}$ in $\mathbb{R}^n$ is the normal fan of a polytope $P$ of $\mathbb{R}^n$, we say that it is \defn{polytopal}, and $P$ is then called a \defn{polytopal realization} of $\mathcal{F}$.

\begin{figure}[h]
\begin{center}
 \capstart
  \includegraphics[scale=.65]{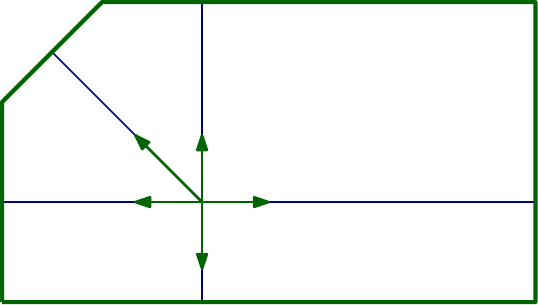} \hfill \includegraphics[scale=.65]{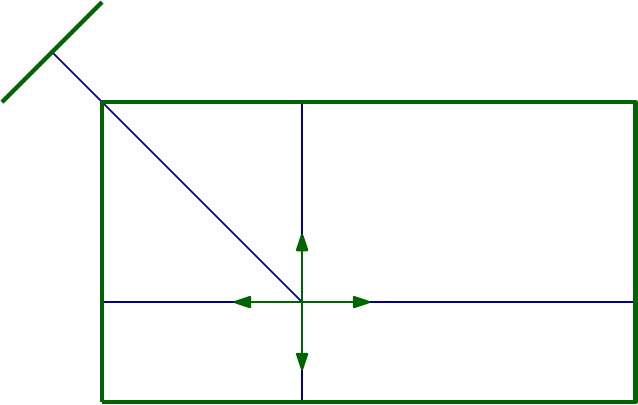}
  \caption{On the left hand side, we picked a good choice of normal hyperplanes and we obtain a realisation of the classical associahedron of type $A_2$. On the right hand side, a bad choice of normal hyperplanes.}
  \label{fig:GoodVSBadRealisations}
\end{center}
\end{figure}

\subsubsection{McMullen's type cone}
We fix an essential, complete, simplicial fan $\mathcal{F}$ in $\mathbb{R}^n$ with $N$ rays arbitrarily ordered.
The type cone of $\mathcal{F}$, introduced in \cite{McMullen-typeCone}, morally parametrizes all polytopal realisations of $\mathcal{F}$.

For any two adjacent maximal cones $\mathbb{R}_{\ge0}\mathcal{R}$ and~$\mathbb{R}_{\ge0}\mathcal{R}'$ of $\mathcal{F}$ with ${\mathcal{R} \smallsetminus \{r\} = \mathcal{R}' \smallsetminus \{r'\}}$, there is, up to scalar multiplication, a unique linear dependency of the form
\[
\alpha r +\alpha' r' = \sum_{s_i\in\mathcal{R}\cap\mathcal{R}'}\alpha_i s_i
\]
where we may and will assume $\alpha,\alpha'>0$. 
We will also make the following choice: $\alpha+\alpha'=2$
(this convention is arbitrary, but naturally arises for $g$-vector fans from categorification of cluster algebras).

\begin{figure}[h]
\begin{center}
 \capstart
  \includegraphics[scale=.65]{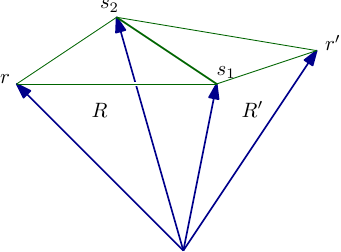}
  \caption{Two adjacent cones in $\mathbb{R}^3$ giving rise to a relation $\alpha r + \alpha' r' = \alpha_1 s_1 + \alpha_2 s_2$.}
  \label{fig:AdjacentCones}
\end{center}
\end{figure}

\begin{definition}
\label{def:typeCone}
The \defn{type cone} of $\mathcal{F}$ is the cone
\[
 \mathbb{TC}(\mathcal{F}) = \left\{h \in \mathbb{R}^N\; | \;\alpha h_r +\alpha' h_{r'} > \sum_{s_i\in\mathcal{R}\cap\mathcal{R}'}\alpha_i h_{s_i}  \; \begin{array}{l} \text{for any adjacent maximal} \\ \text{cones } \mathbb{R}_{\ge0}\mathcal{R} \text{ and } \mathbb{R}_{\ge0}\mathcal{R}' \text{ of } \mathcal{F} \end{array}\right\}.
\]
\end{definition}
We note that this version of the definition comes from~\cite[Lemma 2.1]{ChapotonFominZelevinsky}.

Let $G$ be the $N \times n$-matrix whose rows are the coordinates of the vectors defining the rays of $\mathcal{F}$.
%Let~$\b{K}$ be a $(N-n) \times N$-matrix that spans the left kernel of~$\b{G}$ (\ie $\b{K}\b{G} = 0$ and $\rank(\b{K})=N-n$).
For any height vector $h \in \mathbb{R}^N$, we define the polytope
\[
P_h = \{x \in \mathbb{R}^n \;|\; Gx \le h\},
\]
where $x\le y$ if the inequality holds coordinate-wise.

\begin{lemma}\cite{ChapotonFominZelevinsky}
Let $h\in\mathbb{R}^N$. Then, $P_h$ is a polytopal realization of $\mathcal{F}$ if and only if $h\in\mathbb{TC}(\mathcal{F})$.
\end{lemma}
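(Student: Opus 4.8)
The plan is to prove both implications by analysing, for each maximal cone of $\mathcal{F}$, the unique candidate vertex it determines, and to package everything into a single piecewise-linear support function. Since $\mathcal{F}$ is essential, complete and simplicial, every maximal cone $\mathbb{R}_{\ge0}\mathcal{R}$ is generated by exactly $n$ linearly independent rays, so there is a unique point $x_\mathcal{R}\in\mathbb{R}^n$ with $\langle r,x_\mathcal{R}\rangle=h_r$ for every $r\in\mathcal{R}$; these are the prospective vertices of $P_h$. First I would introduce the continuous piecewise-linear function $\psi$ on $\mathbb{R}^n$ whose restriction to each maximal cone $\mathbb{R}_{\ge0}\mathcal{R}$ is the linear form $c\mapsto\langle c,x_\mathcal{R}\rangle$. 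This is well defined because two adjacent maximal cones share the $n-1$ rays $\mathcal{R}\cap\mathcal{R}'$, on which both linear forms take the prescribed values $h_{s_i}$ and hence agree on the common wall.

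The heart of the argument is a short local computation at each wall. Given adjacent maximal cones $\mathbb{R}_{\ge0}\mathcal{R}$ and $\mathbb{R}_{\ge0}\mathcal{R}'$ with the dependency $\alpha r+\alpha' r'=\sum_i\alpha_i s_i$, I would pair it with $x_\mathcal{R}$: using $\langle s_i,x_\mathcal{R}\rangle=h_{s_i}$ and $\langle r,x_\mathcal{R}\rangle=h_r$ gives $\alpha'\langle r',x_\mathcal{R}\rangle=\sum_i\alpha_i h_{s_i}-\alpha h_r$, so that $\langle r',x_\mathcal{R}\rangle<h_{r'}$ holds precisely when $\alpha h_r+\alpha' h_{r'}>\sum_i\alpha_i h_{s_i}$. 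Thus membership of $h$ in $\mathbb{TC}(\mathcal{F})$ is exactly the statement that, across every wall, the candidate vertex of one cone strictly satisfies the facet inequality indexed by the opposite ray; equivalently, that $\psi$ bends strictly upward across every wall.

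To globalise, I would invoke the standard fact that a continuous piecewise-linear function on a complete fan which is convex across each wall is globally convex, the domains of linearity being exactly the maximal cones when each bend is strict. Assuming $h\in\mathbb{TC}(\mathcal{F})$, strict local convexity then yields $\psi=\max_\mathcal{R}\langle\,\cdot\,,x_\mathcal{R}\rangle$ with each linear piece supporting $\psi$ from below; evaluating at a ray $r_j\notin\mathcal{R}$ gives $\langle r_j,x_\mathcal{R}\rangle\le h_{r_j}$, so each $x_\mathcal{R}$ lies in $P_h$, and linear programming duality identifies $\psi$ with the support function of $P_h$. Since the domains of linearity of this support function are precisely the maximal cones of $\mathcal{F}$, the normal fan of $P_h$ is $\mathcal{F}$, i.e. $P_h$ realises $\mathcal{F}$. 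For the converse, if $P_h$ realises $\mathcal{F}$ then its normal fan has $N$ rays, forcing all $N$ inequalities $\langle r_i,x\rangle\le h_{r_i}$ to be facet-defining; the vertex with normal cone $\mathbb{R}_{\ge0}\mathcal{R}$ is then the meet of the $n$ facets indexed by $\mathcal{R}$, hence equals $x_\mathcal{R}$, the support function coincides with $\psi$ and bends strictly across every wall, so reversing the wall computation returns $h\in\mathbb{TC}(\mathcal{F})$.

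The wall computation is routine; the main obstacle is the global step, namely the passage from local convexity at walls to global convexity together with the identification of $\psi$ with the support function of $P_h$, and, in the converse direction, the verification that every defining inequality is facet-defining so that the realising vertices are exactly the points $x_\mathcal{R}$. These are the places where completeness and simpliciality of $\mathcal{F}$, and the count of $N$ rays, are genuinely used, and where I would concentrate the care.
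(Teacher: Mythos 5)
The paper does not actually prove this lemma: it is imported verbatim as \cite{ChapotonFominZelevinsky} (Lemma~2.1 there), so there is no in-paper argument to compare against. Your proof is correct, and it is essentially the standard argument behind the cited result: the wall computation $\alpha'\langle r',x_\mathcal{R}\rangle=\sum_i\alpha_i h_{s_i}-\alpha h_r$ correctly identifies the type cone inequalities with strict convexity of the piecewise-linear function $\psi$ across each wall, and the passage from local to global convexity of a PL function on a complete simplicial fan, together with the identification of $\psi$ with the support function of $P_h$ (each $x_\mathcal{R}\in P_h$ gives $\psi\le\sigma_{P_h}$; expanding $c\in\mathbb{R}_{\ge0}\mathcal{R}$ in the rays of $\mathcal{R}$ gives the reverse inequality), is exactly the classical mechanism. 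Your converse is also sound: since the rays of the normal fan of $P_h$ are in bijection with its facets and each facet arises from one of the $N$ defining inequalities, all $N$ inequalities are facet-defining, the vertex with normal cone $\mathbb{R}_{\ge0}\mathcal{R}$ is $x_\mathcal{R}$, and strictness of the bend (the vertex $x_\mathcal{R}$ lies on no facet indexed outside $\mathcal{R}$) reverses the wall computation. You have correctly located the only real work in the local-to-global convexity step and the facet-counting step, both of which are standard.
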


\subsubsection{From the type cone to ABHY's construction}

The key input from part I of~\cite{PadrolPaluPilaudPlamondon}, is to realise that ABHY's construction is, in good cases, a convenient reformulation of the type cone.

\begin{theorem}\cite{PadrolPaluPilaudPlamondon}
\label{theorem:PPPPpartI}
 Assume that the  type cone $\mathbb{TC}(\mathcal{F})$ of the fan $\mathcal{F}$ is simplicial (equivalently: has $N-n$ facets).
Let $K$ be the $(N-n)\times N$ matrix whose rows are the coordinates of the inner normal vectors of the facets of $\mathbb{TC}(\mathcal{F})$.
Then, for any positive $c\in\mathcal{R}_{>0}^{N-n}$, the polytope
\[
 Q_c = \left\{q\in\mathbb{R}^N \;|\; Kq=c \text{ and } q\ge0 \right\}
\]
is a polytopal realisation of $\mathcal{F}$.
Moreover, the polytopes $Q_c$ describe all polytopal realisations of $\mathcal{F}$.
\end{theorem}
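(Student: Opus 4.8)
The plan is to exhibit an explicit affine isomorphism between $Q_c$ and a polytope $P_h$ of the kind appearing in the Chapoton--Fomin--Zelevinsky lemma, and then invoke that lemma. The bridge between the two constructions is the \emph{slack map}. Writing $r_i$ for the $i$-th row of $G$, recall that whenever $h\in\mathbb{TC}(\mathcal{F})$ the polytope $P_h=\{x\in\mathbb{R}^n : Gx\le h\}$ is a polytopal realisation of $\mathcal{F}$ whose facet $\{\langle r_i,x\rangle = h_i\}$ has outer normal $r_i$; this is exactly what it means for the normal fan of $P_h$ to be $\mathcal{F}$.

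First I would pin down the linear algebra relating $G$ and $K$. Since translating $P_h$ by $v\in\mathbb{R}^n$ replaces $h$ by $h+Gv$, the lineality space of $\mathbb{TC}(\mathcal{F})$ is $\operatorname{im}(G)$, which is $n$-dimensional as the fan is essential and $G$ thus has rank $n$. Because the rows of $K$ are normal to the facets of $\mathbb{TC}(\mathcal{F})$, they annihilate this lineality space, so $KG=0$. The hypothesis that $\mathbb{TC}(\mathcal{F})$ is simplicial, equivalently has exactly $N-n$ facets, ensures that the $N-n$ rows of $K$ are linearly independent, so $K$ has rank $N-n$; combined with $KG=0$ and $\dim\operatorname{im}(G)=n$ this forces $\ker(K)=\operatorname{im}(G)$. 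I would also record that, since the facet inequalities already cut out the cone, $\mathbb{TC}(\mathcal{F})=\{h : Kh>0\}$.

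Next I would set up the slack map $\phi:\mathbb{R}^n\to\mathbb{R}^N$, $x\mapsto h-Gx$, for a fixed $h$. As $G$ is injective, $\phi$ is an affine embedding with image $h+\operatorname{im}(G)$; but $Kq=Kh$ defines precisely $h+\ker(K)=h+\operatorname{im}(G)$, so $\phi$ is an affine isomorphism onto the slice $\{q : Kq=Kh\}$. Under $\phi$ the constraint $q\ge0$ pulls back to $Gx\le h$, whence $\phi$ restricts to an affine isomorphism $P_h\xrightarrow{\sim}Q_c$ with $c:=Kh$, carrying the facet $\{q_i=0\}$ of $Q_c$ to the facet of $P_h$ with outer normal $r_i$. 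Since $\mathcal{F}$ is complete, $P_h$ is bounded and $n$-dimensional, hence so is $Q_c$; and reading off facet normals in the affine hull of $Q_c$ via $\phi^{-1}$ shows that the normal fan of $Q_c$ is again $\mathcal{F}$.

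The two assertions then follow formally. Given any positive $c$, surjectivity of $K$ yields some $h$ with $Kh=c>0$, so $h\in\mathbb{TC}(\mathcal{F})$ and $P_h$ realises $\mathcal{F}$ by the Chapoton--Fomin--Zelevinsky lemma; as $Q_c\cong P_h$ and the identification preserves the fan, $Q_c$ realises $\mathcal{F}$. Conversely, every realisation of $\mathcal{F}$ is some $P_h$ with $h\in\mathbb{TC}(\mathcal{F})$, and quotienting by translations, that is by the lineality space $\operatorname{im}(G)=\ker(K)$, corresponds bijectively through the induced isomorphism $\mathbb{R}^N/\ker(K)\xrightarrow{\sim}\mathbb{R}^{N-n}$ to the value $c=Kh$ ranging over $\mathbb{R}_{>0}^{N-n}$; hence the family $(Q_c)_c$ exhausts all realisations. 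The main obstacle, and the only place the simpliciality hypothesis is genuinely used, is the identification $\ker(K)=\operatorname{im}(G)$: without it the slab $\{Kq=c\}$ need not coincide with a translate of the affine hull of $P_h$, and the clean affine isomorphism $Q_c\cong P_h$ — on which everything hinges — would break down.
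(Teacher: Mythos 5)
Your proof is correct and follows essentially the same route the paper indicates in the remark after the theorem: identify $Q_c$ with $P_h$ (where $Kh=c$) via the slack map $x\mapsto h-Gx$, using $\ker(K)=\operatorname{im}(G)$ from simpliciality, and observe that $h\in\mathbb{TC}(\mathcal{F})$ is equivalent to $c>0$, then invoke the Chapoton--Fomin--Zelevinsky lemma. Your write-up just makes explicit the linear algebra ($KG=0$, rank considerations) that the paper leaves implicit in the phrase ``well-chosen affine transformation.''
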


\begin{remark}
Let $h\in\mathbb{R}^N$ be such that $Kh=c$.
 Then the polytope $Q_c$ is obtained from $P_h$, via a standard method, by some well-chosen affine transformation, and the condition $h\in\mathbb{TC}(\mathcal{F})$ is equivalent to $c>0$.
 Going from $P_h$ to $Q_c$ can be thought of as going from the type cone to ABHY's construction.
We thus have an explanation of why ABHY's construction works.
\end{remark}

Our aim is now to apply \cref{theorem:PPPPpartI} to the $g$-vector fan $\mathcal{F}_\Sigma$ of a cluster algebra of finite type with initial seed $\Sigma$ in order to recover ABHY's construction and to generalise it to any initial seed.
In order to do so, we have to show that $\mathcal{F}_\Sigma$ satisfies the unique exchange relation property, that its type cone is simplicial and that the equations defining the polytope $Q_c$ coincide with those appearing in the LaCIM's interpretation of ABHY's construction.
That is where we start using representation theory and where:

%%%%%%%%%%%
\subsection{Cluster categories come into play}
%%%%%%%%%%%

Let $\Delta$ be a Dynkin diagram and let $\mathcal{A}_\Delta$ be the associated cluster algebra of finite type.
Fix any initial seed $\Sigma_0$, and let $\mathcal{F}_{\Sigma_0}$ be the fan given by the $g$-vectors relative to $\Sigma_0$.

\begin{definition}
 The simplicial fan $\mathcal{F}$ has the \defn{unique exchange relation property} if the linear dependencies $\alpha r +\alpha' r' = \sum_{s_i\in\mathcal{R}\cap\mathcal{R}'}\alpha_i s_i
$ only depend on $r$ and $r'$ and not on the choice of the cones $\mathbb{R}_{\ge 0}\mathcal{R}$ and $\mathbb{R}_{\ge 0}\mathcal{R}'$.
\end{definition}

\begin{theorem}\cite{PadrolPaluPilaudPlamondon}
\label{theorem:PPPPpartII}
The $g$-vector fan $\mathcal{F}_{\Sigma_0}$ has the unique exchange relation property and its type cone is simplicial.
Moreover, the associated polytopes $Q_c$ in~\cref{theorem:PPPPpartI} are precisely given by ABHY's construction in a re-interpretation similar to that of Bazier-Matte--Chapelier-Laget--Douville--Mousavand--Thomas--Y{\i}ld{\i}r{\i}m.
\end{theorem}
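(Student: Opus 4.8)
The plan is to verify the two combinatorial hypotheses of \cref{theorem:PPPPpartI} for the $g$-vector fan $\mathcal{F}_{\Sigma_0}$ by means of the relative extriangulated structure of \cref{ex:relative}, and then to compute the facets of its type cone explicitly. Write $\mathscr{T}$ for the cluster category of type $\Delta$ and let $T$ be the cluster tilting object attached to the initial seed $\Sigma_0$. I would work throughout in $(\mathscr{T},\mathbb{F})$ with $\mathbb{F}=\mathbb{E}_{\operatorname{add}T}$, in which $T$ is projective: here the index $\operatorname{ind}_T$ is exactly the $g$-vector relative to $\Sigma_0$, and it is additive along the conflations of this structure. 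Every linear relation among $g$-vectors that I need will be produced by applying $\operatorname{ind}_T$ to a suitable conflation.

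For the unique exchange relation property, note that a wall of $\mathcal{F}_{\Sigma_0}$ between two adjacent maximal cones corresponds to an exchange pair of indecomposable rigid objects $X\not\cong Y$, fitting into exchange conflations $X\infl B\defl Y\overset{\delta}{\dashrightarrow}$ and $Y\infl B'\defl X\dashrightarrow$ whose middle terms lie in the common part of the two clusters. In finite type one has $\dim_{\field}\operatorname{Ext}^1(Y,X)=1$, so the nonsplit extension, hence the conflation and its middle term $B$, is determined up to isomorphism by the unordered pair $\{X,Y\}$ alone. The index of this relative structure then yields $\operatorname{ind}_T X+\operatorname{ind}_T Y=\operatorname{ind}_T B$; after the normalisation $\alpha+\alpha'=2$ this reads $r+r'=\sum_i\alpha_i s_i$, where $r=\operatorname{ind}_T X$, $r'=\operatorname{ind}_T Y$, and the $s_i$ and $\alpha_i$ are the $g$-vectors and multiplicities of the indecomposable summands of $B$. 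Since $B$ depends only on $\{X,Y\}$, so does the relation, which is precisely the unique exchange relation property.

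The main obstacle is to show that the type cone is simplicial, i.e.\ that it has exactly $N-n$ facets (note that, as the $A_2$ example already shows, there are strictly more walls than facets, so genuine redundancy must be exhibited). The candidate facet inequalities are the $N-n$ relations obtained by applying $\operatorname{ind}_T$ to the almost split conflations of $(\mathscr{T},\mathbb{F})$, one for each non-initial indecomposable (equivalently, each positive root): the mesh ending at such an object $M$ yields $\operatorname{ind}_T(\tau M)+\operatorname{ind}_T M=\sum_j\operatorname{ind}_T B_j$. Each of these is the exchange relation of a genuine wall, so its inequality does occur among those cutting out the type cone; the delicate converse is that every wall inequality is a nonnegative combination of these $N-n$ mesh inequalities, so that the latter alone define the cone. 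I expect this to be the crux, requiring an induction along the exchange graph together with the combinatorics of the AR quiver in finite type in order to decompose an arbitrary exchange relation into fundamental mesh relations. Linear independence of the $N-n$ mesh relations, which follows from their triangularity with respect to the partial order given by the AR quiver, then forces the cone to be simplicial with exactly $N-n$ facets.

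Finally, once the type cone is known to be simplicial, the matrix $K$ of \cref{theorem:PPPPpartI} has as its rows the inner facet normals, i.e.\ the $N-n$ mesh relations above. Identifying the coordinate $q_M$ with the height indexed by the indecomposable $M$, the system $Kq=c$ becomes exactly the mesh equations $q_{\tau M}+q_M=\sum_j q_{B_j}+c_M$ defining the affine space $E_c\subseteq\rb^N$, and the constraint $q\ge 0$ together with the projection $\pi$ recovers $\mathbb{A}_c=\pi(E_c\cap\rb_{>0}^N)$. Hence $Q_c$ coincides with the reinterpretation of ABHY's construction, completing the proof of \cref{theorem:PPPPpartII}. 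Of the three parts, the first is a one-dimensionality statement special to finite type, the third is essentially a coordinate identification, and only the redundancy argument underlying simpliciality is genuinely hard.
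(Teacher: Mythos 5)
Your framework is the one the paper uses: the relative extriangulated structure $\mathbb{E}_{\operatorname{add}T}$ on the cluster category, additivity of $\operatorname{ind}_T$ on its conflations to produce the linear dependencies, the $N-n$ mesh (Auslander--Reiten) relations as candidate facet normals, and the final coordinate identification of $Kq=c$, $q\geq 0$ with the ABHY/LaCIM system. Two local inaccuracies in your exchange-relation step: in the relative structure only one of the two exchange triangles $X\to B\to Y\to\Sigma X$ and $Y\to B'\to X\to\Sigma Y$ is a conflation (the one whose connecting morphism is killed by $\mathscr{T}(T,-)$), not both as you assert; and the unordered pair $\{X,Y\}$ determines \emph{two} non-split extensions with non-isomorphic middle terms $B\not\cong B'$, so "the nonsplit extension is determined by $\{X,Y\}$" must be replaced by "both exchange triangles are determined by $\{X,Y\}$, and the relation is read off the one that is an $\mathbb{E}_{\operatorname{add}T}$-conflation". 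With that correction your argument for the unique exchange relation property is essentially the paper's.

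The genuine gap is exactly where you locate the difficulty and then stop: you never prove that every wall inequality is a nonnegative combination of the $N-n$ mesh inequalities, you only "expect" that an induction along the exchange graph will yield it. That redundancy statement \emph{is} the theorem; without it your argument shows only that the type cone is contained in the cone cut out by the mesh inequalities, which could a priori be strictly smaller than that cone and have many more facets. The paper does not argue by induction on the exchange graph. It computes the Grothendieck group of the relative structure: the index induces an isomorphism $\varphi\colon\operatorname{K}_0(\mathscr{C}_\Delta;T)\xrightarrow{\;\sim\;}\operatorname{K}^\text{split}_0(\operatorname{add}T)$, so that relations among $g$-vectors are exactly the elements of $\operatorname{Ker}\mathfrak{g}$, and then proves (this is \cite[Corollary 3.9]{PadrolPaluPilaudPlamondon}, the technical heart) that $\operatorname{Ker}\mathfrak{g}$ is generated \emph{as a monoid} by the relations $[L]+[N]-[M]$ attached to the Auslander--Reiten sequences of $(\mathscr{C}_\Delta,\Delta_T)$, i.e.\ the AR triangles $L\to M\to N\to\Sigma L$ with $L$ not isomorphic to any $\Sigma T_i$. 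Monoid generation is precisely the statement that every exchange relation is a nonnegative integer combination of mesh relations, which gives the redundancy; your triangularity argument for linear independence then indeed yields simpliciality. So your proposal is a correct skeleton with the crucial bone missing, and the missing piece is supplied in the paper by a Grothendieck-group/AR-theoretic mechanism rather than the combinatorial induction you sketch.
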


\noindent {\bf Sketch of proof}

\medskip

\noindent \emph{First step}: Replace the full subcategory $\mathcal{D}_Q$ by the cluster category $\mathscr{C}_\Delta$ of \cite{BuanMarshReinekeReitenTodorov}.

\medskip

It is the orbit category of the bounded derived category of $\Delta$ with respect to the action of the autoequivalence $\tau^{-1}[1]$ (see \cref{fig:ClusterCategory} for an example).
By~\cite{Keller-orbit}, the cluster category $\mathscr{C}_\Delta$ inherits a triangulated structure from the derived category.
Its shift functor we denote by $\Sigma$.
The functor $\tau^{-1}[1]$ is chosen so that, in the cluster category, $\Sigma$ is naturally isomorphic to $\tau$. In other words, $\mathscr{C}_\Delta$ is 2-Calabi--Yau.

\begin{figure}[h]
\begin{center}
 \capstart
  \includegraphics[scale=.6]{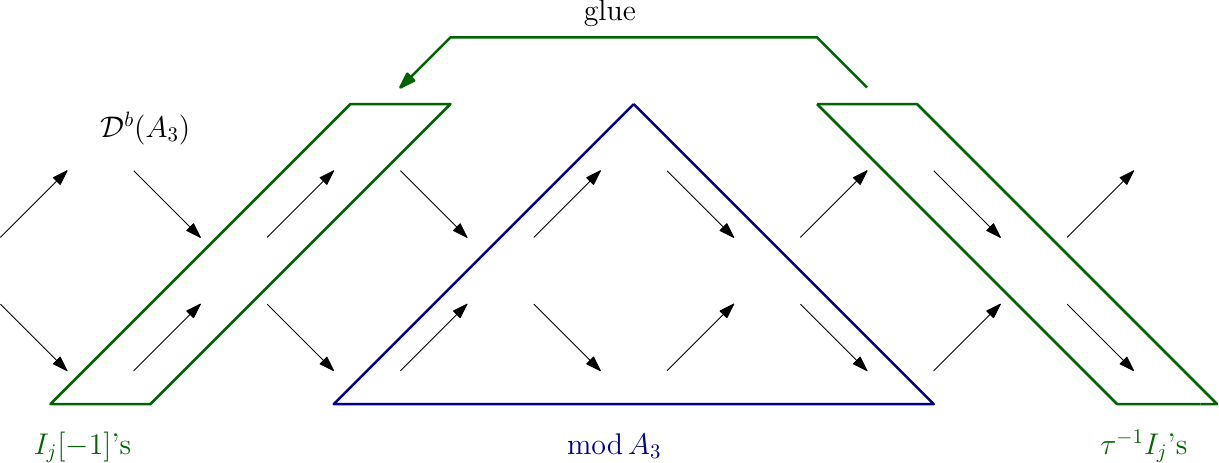}
  \caption{The Auslander--Reiten quiver of the cluster category of type $A_3$.}
  \label{fig:ClusterCategory}
\end{center}
\end{figure}

\noindent \emph{Second step}: Replace the triangulated structure of $\mathscr{C}_\Delta$ by some relative extriangulated structure.

\medskip

More precisely, there is a cluster tilting object $T\in\mathscr{C}_\Delta$ associated with the seed $\Sigma_0$.
Let $\Delta_T$ be the collection of all those triangles $X\to Y\to Z\xrightarrow{\delta}\Sigma X$ fo which the morphism $\delta$ factors through an object isomorphic to $\Sigma T^{\oplus r}$ for some $r$.
Then by \cite[Proposition 3.19]{HerschendLiuNakaoka-I} (see \cref{sssection:RelativeStructures}) $\mathscr{C}_\Delta$ endowed with $\Delta_T$ is an extriangulated category.
Even though this extriangulated structure has the drawback of not being 2-Calabi--Yau anymore, it has some interest in categorifying $g$-vectors:
The Grothendieck group of the triangulated $\mathscr{C}_\Delta$ has torsion (we even have K$_0(A_2)=0$) whereas $g$-vectors naturally live in the Grothendieck group of the extriangulated $\mathscr{C}_\Delta$.
Let 
\[
 \operatorname{K}(\mathscr{C}_\Delta;T)=\operatorname{K}^\text{split}_0(\mathscr{C}_\Delta)/\langle [L]+[N]-[M]\;|\; L\to M\to N\to\Sigma L \text{ in } \Delta_T \rangle
\]
be the Grothendieck group of $\mathscr{C}_\Delta$ endowed with the extriangulated structure given by $\Delta_T$, and let
\[
 \mathfrak{g}: \operatorname{K}^\text{split}_0(\mathscr{C}_\Delta) \to \operatorname{K}_0(\mathscr{C}_\Delta;T)
\]
be the quotient map.

\medskip

\noindent \emph{Third step}: Compute the Grothendieck group K$_0(\mathscr{C}_\Delta;T)$.

\medskip

For any object $X\in\mathscr{C}_\Delta$, there is a triangle $T_1\to T_0\to X\to \Sigma T_1$.
The assignement $\operatorname{ind}_T: X\mapsto [T_0]-[T_1]$ induces a well-defined group homomorphism $\operatorname{K}^\text{split}_0(\mathscr{C}_\Delta)\to \operatorname{K}^\text{split}_0(\operatorname{add}T)$.
One can deduce from~\cite[Proposition 2.2]{Palu-CC} that there is a commutative diagram
% https://q.uiver.app/#q=WzAsMyxbMCwwLCJcXG9wZXJhdG9ybmFtZXtLfV5cXHRleHR7c3BsaXR9XzAoXFxtYXRoc2Nye0N9X1xcRGVsdGEpIl0sWzAsMiwiXFxvcGVyYXRvcm5hbWV7S31fMChcXG1hdGhzY3J7Q31fXFxEZWx0YTtUKSJdLFsyLDIsIlxcb3BlcmF0b3JuYW1le0t9XlxcdGV4dHtzcGxpdH1fMChcXG9wZXJhdG9ybmFtZXthZGR9VCkiXSxbMCwxLCJcXG1hdGhmcmFre2d9IiwxLHsic3R5bGUiOnsiaGVhZCI6eyJuYW1lIjoiZXBpIn19fV0sWzAsMiwiXFxvcGVyYXRvcm5hbWV7aW5kfV9UIiwxXSxbMSwyLCJcXHZhcnBoaSIsMV1d
\[\begin{tikzcd}
	{\operatorname{K}^\text{split}_0(\mathscr{C}_\Delta)} \\
	\\
	{\operatorname{K}_0(\mathscr{C}_\Delta;T)} && {\operatorname{K}^\text{split}_0(\operatorname{add}T)}
	\arrow["{\mathfrak{g}}"{description}, two heads, from=1-1, to=3-1]
	\arrow["{\operatorname{ind}_T}"{description}, from=1-1, to=3-3]
	\arrow["\varphi"{description}, from=3-1, to=3-3]
\end{tikzcd}\]
where $\varphi$ is a group isomorphism.

\medskip

\noindent \emph{Fourth step}: Study the kernel of the quotient map $\mathfrak{g}$.

Write $T=T_1\oplus\cdots\oplus T_n$ where the $T_i$'s are indecomposable and $T$ is assumed basic.
We prove in \cite[Corollary 3.9]{PadrolPaluPilaudPlamondon} that $\operatorname{Ker}\mathfrak{g}$ is generated, as a monoid, by the relations $[L]+[N]-[M]$, for each Auslander--Reiten triangle $L\to M\to N\to \Sigma L$ in $\mathscr{C}_\Delta$ with $L$ not isomorphic to any $\Sigma T_i$.
Those are precisely the Auslander--Reiten sequences (see~\cite[Proposition 5.10]{IyamaNakaokaPalu}) of the extriangulated category $(\mathscr{C}_\Delta, \Delta_T)$.

\medskip

\noindent \emph{Last step}: The type cone is simplicial

There are $N$ Auslander--Reiten triangles in $\mathscr{C}_\Delta$, hence $N-n$ Auslander--Reiten sequences in  $(\mathscr{C}_\Delta, \Delta_T)$.
One thus deduces from the above that $\mathbb{TC}(\mathscr{F}_\Delta)$ is simplicial.
Moreover, from the form of the Auslander--Reiten sequences, one also deduces that using the type cone construction recovers the ABHY construction in the LaCIM's version for simply laced types. 

\qed

\begin{remark}
The results in \cite{PadrolPaluPilaudPlamondon} on $(\mathscr{C}_\Delta, \Delta_T)$ are proven in greater generality, by considering some hereditary extriangulated categories.
They can thus be applied to other categories such as the categories $K^{[-1,0]}(\operatorname{proj}\Lambda)$ of 2-term complexes of projectives over a finite-dimensional algebra.
It follows that ABHY's construction also works for realising $g$-vector fans of (brick, 2-acyclic) gentle algebras of finite representation type.
Due to their relation with dissections on oriented surfaces, those polytopal realisations turned out to be useful in theoretical physics (see \cite{JagadaleLaddha,Chhatoi-Halohedron,ABJJLM} for instance).
\end{remark}

%%%%%%%%%%%%%%%%%%%%%%%%%%%%%%%%%%%%%%%%%%%%%%%%%%%%%%%%%%
\section{Mutation}
\label{section:mutation}
%%%%%%%%%%%%%%%%%%%%%%%%%%%%%%%%%%%%%%%%%%%%%%%%%%%%%%%%%%

This section, based on~\cite{GorskyNakaokaPalu-Mutation}, aims at explaining how extriangulated categories form a convenient setting for studying mutations in representation theory.

%%%%%%%%%%%
\subsection{0-Auslander extriangulated categories}
%%%%%%%%%%%

As illustrated by the previous section, the extriangulated categories $K^{[0,1]}(\operatorname{proj}\Lambda)$ and $(\mathscr{C}_\Delta,\Delta_T)$ may be used to categorify $g$-vector fans.
Both those categories enjoy some nice properties which motivates the following:

\begin{definition}
\label{def:0-Auslander}
 An extriangulated category $\mathscr{C}$ is \defn{0-Auslander} if
 \begin{enumerate}[(a)]
  \item for any $X\in\mathscr{C}$, there is a conflation $P_1\infl P_0\defl X$ with $P_0$ and $P_1$ projective and
  \item for any projective $P\in\mathscr{C}$, there is a conflation $P\infl Q\defl I$ where $Q$ is projective-injective and $I$ is injective.
 \end{enumerate}
\end{definition}

\begin{remark}
 Conditions (a) and (b) can be interpreted as $\operatorname{gl.dim}\mathscr{C} \leq 1\leq \operatorname{dom.dim}\mathscr{C}$, whence the name 0-Auslander.
\end{remark}

\begin{remark}
One easily shows that \cref{def:0-Auslander} is self-dual. Indeed, it is equivalent to
 \begin{enumerate}[(a')]
  \item for any $X\in\mathscr{C}$, there is a conflation $X\infl I_0\defl I_1$ with $I_0$ and $I_1$ injective and
  \item for any injective $I\in\mathscr{C}$, there is a conflation $P\infl Q\defl I$ where $Q$ is projective-injective and $P$ is projective.
 \end{enumerate}
\end{remark}

\begin{example}
\label{ex:0-Auslander}
 The following are examples of 0-Auslander extriangulated categories.
 \begin{enumerate}
  \item[($\alpha$)] The category of finite-dimensional representations $\operatorname{rep}_\mathbb{K}\overrightarrow{A_n}$ of a quiver of type $A_n$ with linear orientation, over a field $\mathbb{K}$.
  If $\mathbb{K}$ is algebraically closed, this is the only non-semisimple example of a 0-Auslander module category~\cite{Tachikawa}.
  \item[($\beta$)] Let $Q$ be a quiver of type $A_n$, with any orientation, and let $\mathbb{K}$ be a field. There is an exact structure $\mathscr{E}$ on $\operatorname{rep}_\mathbb{K}Q$ whose Auslander--Reiten sequences are precisely those of $\operatorname{rep}_\mathbb{K}Q$ that have non-indecomposable middle term.
  Then $\mathscr{E}$ is a 0-Auslander exact category~\cite{BrustleHansonRoySchiffler-AlmostRigid}.
  \item[($\gamma$)] Let $\mathscr{C}$ be a cluster category~\cite{BuanMarshReinekeReitenTodorov}, a generalised cluster category~\cite{Amiot-ClusterCats} or, more generally, any Hom-finite 2-Calabi--Yau triangulated category with some cluster tilting object $T$.
  There is a maximal extriangulated structure $\mathbb{E}^T$ on $\mathscr{C}$ making $T$ projective~\cite[Proposition 3.19]{HerschendLiuNakaoka-I}.
  Then $(\mathscr{C},\mathbb{E}^T)$ is 0-Auslander.
  \item[($\delta$)] Let $\mathcal{A}$ be an additive category (our motivating example being $\mathcal{A}=\operatorname{proj}\Lambda$, for an Artin algebra $\Lambda$).
  Then the category of two-term complexes $C^{[-1,0]}(\mathcal{A})$ and the homotopy category two-term complexes $K^{[-1,0]}(\mathcal{A})$ are 0-Auslander~\cite{BautistaSalorioZuazua}.
  \item[($\varepsilon$)] If $\Lambda$ is a finite-dimensional gentle algebra over an algebraically closed field, there is an associated oriented surface with dissection~\cite{BaurCoelhoSimoes,OpperPlamondonSchroll,PaluPilaudPlamondon-surfaces}.
  Some specific arcs on the surface, called accordions, play a key role in describing combinatorially the $\tau$-tilting theory of the gentle algebra $\Lambda$.
  There is an exact category $\mathscr{W}$ whose indecomposable objects are the accordions and whose non-split extensions categorify crossings of accordions~\cite[Section 6]{GorskyNakaokaPalu-Mutation}.
  The category $\mathscr{W}$ is 0-Auslander. See \cref{fig:ARquiverAccordions} for an example.
  \item[($\zeta$)] If $(\mathcal{A},\mathcal{B})$ is a co-$t$-structure~\cite{Pauksztello,Bondarko} in a triangulated category $\mathscr{T}$, then its extended coheart $\mathscr{C}=(\Sigma^2\mathcal{A})\cap\mathcal{B}$ is a 0-Auslander extriangulated category.
 \end{enumerate}
\end{example}

\begin{remark}
 In the examples $(\mathscr{C},\mathbb{E}^T)$ and $K^{[-1,0]}(\mathcal{A})$ above, the only projective-injectives are the zero objects.
 Such 0-Auslander extriangulated categories are called \defn{reduced 0-Auslander}.
 If $\mathscr{C}$ is any 0-Auslander extriangulated category then the ideal quotient of $\mathscr{C}$ by the morphisms factoring through a projective-injective object is a reduced 0-Auslander extriangulated category.
\end{remark}

\begin{figure}[h]
\begin{center}
 \capstart
  \includegraphics[scale=.8]{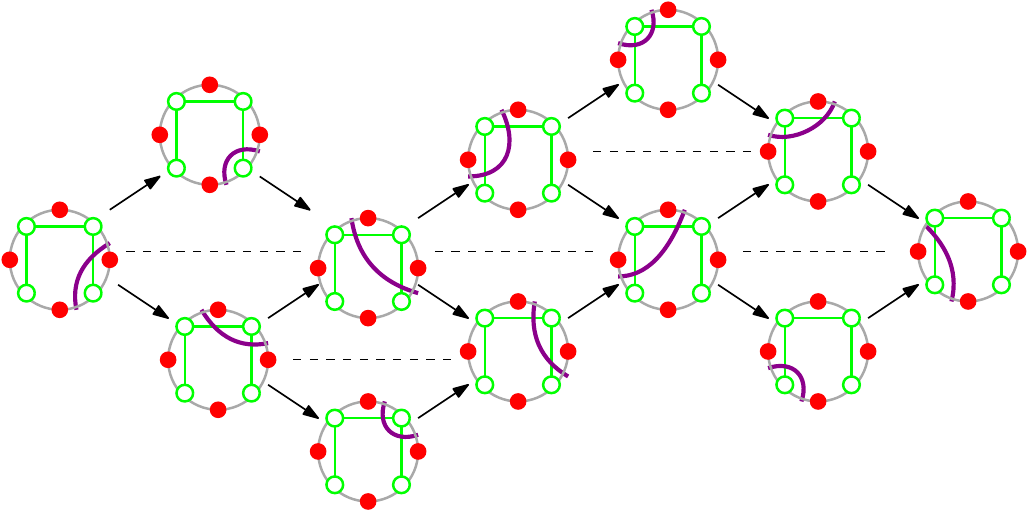}
  \caption{The Auslander--Reiten quiver of the category $\mathscr{W}$ associated with the gentle bound quiver $(Q,I)=(1\xrightarrow{\alpha}2\xrightarrow{\beta}3, \alpha\beta)$. Its vertices are in bijection with the accordions (curvy and purple) with respect to the base dissection (straight and green) corresponding to $(Q,I)$.}
  \label{fig:ARquiverAccordions}
\end{center}
\end{figure}

%%%%%%%%%%%
\subsection{Mutation of maximal rigid objects}
%%%%%%%%%%%

Fix a field $\mathbb{K}$ and a $\mathbb{K}$-linear, Krull--Schmidt, Hom-finite, 0-Auslander extriangulated category $\mathscr{C}$.

Recall that an object $R\in\mathscr{C}$ is rigid if $\mathbb{E}(R,R)=0$.

\begin{definition}
\label{def:tilting}
Let $R\in\mathscr{C}$ be rigid. We say that $R$ is
\begin{enumerate}[(i)]
 \item \defn{maximal rigid} if $R\oplus X$ is rigid only if $X\in\operatorname{add}R$;
 \item \defn{$\mathbb{E}$-tilting} if the conditions $\mathbb{E}(R,X)=0=\mathbb{E}(X,R)$ imply $X\in\operatorname{add}R$;
 \item \defn{tilting} if for any projective $P\in\mathscr{C}$, there is a conflation $P\infl R_0\defl R_1$ with $R_0,R_1\in\operatorname{add}R$;
 \item \defn{silting} if $\operatorname{thick}R = \mathscr{C}$, where $\operatorname{thick}R$ is the smallest thick subcategory of $\mathscr{C}$ containing $R$;
 \item \defn{complete rigid} if $\mathscr{C}$ has a projective generator $P$ and $|R|=|P|$, where $|X|$ denote the number of isomorphism classes of indecomposable summands of $X$. 
\end{enumerate}
\end{definition}

The following results are proven in~\cite[Section 4]{GorskyNakaokaPalu-Mutation}.

\begin{proposition}
 Let $R\in\mathscr{C}$ be rigid. We have:
 \begin{enumerate}
  \item Conditions \emph{(i)} to \emph{(iv)} in~\cref{def:tilting} are equivalent. 
  \item If $\mathscr{C}$ has a projective generator, they are also equivalent to condition \emph{(v)}.
  \item $R$ can be completed to a silting object via some version of Bongartz completion.
 \end{enumerate}
\end{proposition}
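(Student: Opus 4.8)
The plan is to prove conditions (i)--(iv) of \cref{def:tilting} equivalent by running the cycle (ii)$\Rightarrow$(i)$\Rightarrow$(iii)$\Rightarrow$(iv)$\Rightarrow$(ii), keeping the two defining properties of a 0-Auslander category constantly in play: property (a) of \cref{def:0-Auslander} supplies every object with a length-one projective resolution (``heredity'', i.e.\ $\operatorname{gl.dim}\mathscr{C}\le 1$, so the six-term sequences of \cref{prop:extricat long exact sequences} become right exact), while rigidity of $R$ makes extensions inside $\operatorname{add}R$ split. The structural implications are short; the heart of the matter, and what I expect to be the main obstacle, is the completion step, which I would arrange to simultaneously supply both the implication (i)$\Rightarrow$(iii) and the Bongartz completion of part~(3).

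The easy implications come first. That (ii)$\Rightarrow$(i) is immediate: if $R\oplus X$ is rigid then in particular $\mathbb{E}(R,X)=0=\mathbb{E}(X,R)$, whence $X\in\operatorname{add}R$. For (iii)$\Rightarrow$(iv) I would show $\operatorname{thick}R=\mathscr{C}$: the tilting conflations $P\infl R_0\defl R_1$ with $R_i\in\operatorname{add}R$ exhibit every projective $P$ as the third term of a conflation between objects of $\operatorname{thick}R$, so $P\in\operatorname{thick}R$; then property (a) presents an arbitrary $X$ via a conflation $P_1\infl P_0\defl X$ with $P_0,P_1$ projective, forcing $X\in\operatorname{thick}R$. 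For (iv)$\Rightarrow$(ii) I would first record the description, valid in the hereditary rigid setting, of $\operatorname{thick}R$ as the objects $Y$ admitting a conflation $R_1\infl R_0\defl Y$ with $R_i\in\operatorname{add}R$ (that this subcategory is already thick follows from a horseshoe argument in which rigidity of $R$ removes the potential higher extensions). Given $X$ with $\mathbb{E}(R,X)=0=\mathbb{E}(X,R)$ and $X\in\operatorname{thick}R=\mathscr{C}$, the class of such a conflation lies in $\mathbb{E}(X,R_1)$, which vanishes since $R_1\in\operatorname{add}R$; hence it splits, $R_0\cong R_1\oplus X$, and $X\in\operatorname{add}R$.

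The core step is (i)$\Rightarrow$(iii). Fix a projective $P$ and, using Hom-finiteness and the Krull--Schmidt property, choose a left $\operatorname{add}R$-approximation $P\to R^0$; the 0-Auslander structure is what allows me to take it to be an inflation, producing a conflation $P\infl R^0\defl C\overset{\delta}{\dashrightarrow}$. A Wakamatsu-type computation with \cref{prop:extricat long exact sequences} then gives $\mathbb{E}(C,R)=0$ (from surjectivity of $\operatorname{Hom}(R^0,R)\to\operatorname{Hom}(P,R)$ together with $\mathbb{E}(R^0,R)=0$), while heredity combined with $\mathbb{E}(R,R^0)=0$ yields $\mathbb{E}(R,C)=0$ and hence $\mathbb{E}(C,C)=0$. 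Thus $R\oplus C$ is rigid, and maximality of $R$ forces $C\in\operatorname{add}R$, turning $P\infl R^0\defl C$ into the tilting conflation demanded by (iii). Performing the same approximation/universal-extension construction for a merely rigid $R$ produces a rigid $R\oplus C$ whose thick closure contains all projectives, hence equals $\mathscr{C}$ by the argument of (iii)$\Rightarrow$(iv); this is the Bongartz completion of part~(3). The main obstacle sits precisely here: ensuring the approximation can be realised as an inflation (equivalently, that the universal extension against the projectives is well defined) and checking the \emph{full} rigidity of the completed object; when no projective generator is available one must in addition arrange a single completion covering all projectives at once, which is where the careful 0-Auslander bookkeeping is required.

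Finally, under the extra hypothesis of a projective generator $P$, I would prove (v) equivalent to the rest by a counting argument modelled on the $g$-vector discussion of \cref{section:g-vectors}. The index map, sending an object to its class in $\operatorname{K}^{\mathrm{split}}_0(\mathscr{C})$ modulo the 0-Auslander relations and thereby to $\operatorname{K}^{\mathrm{split}}_0(\operatorname{add}P)\cong\mathbb{Z}^{|P|}$, carries the indecomposable summands of a rigid object to linearly independent elements; hence $|R|\le|P|$ for every rigid $R$, with equality exactly when no indecomposable summand can be adjoined, i.e.\ when $R$ is maximal rigid. This establishes (i)$\Leftrightarrow$(v), and reading $|R|=|P|$ off the tilting conflations of (iii) closes the circle of equivalences.
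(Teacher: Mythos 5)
First, a caveat: the survey does not prove this proposition itself --- it refers to \cite[Section 4]{GorskyNakaokaPalu-Mutation} --- so your argument can only be judged on its own merits. Much of it is sound: (ii)$\Rightarrow$(i) and (iii)$\Rightarrow$(iv) are immediate, as you say; your approximation-plus-Wakamatsu argument for (i)$\Rightarrow$(iii) works (realising the left $\operatorname{add}R$-approximation as an inflation by adding an inflation $P\infl Q$ into a projective-injective, noting $Q\in\operatorname{add}R$ by maximality, then computing $\mathbb{E}(C,R)$, $\mathbb{E}(R,C)$, $\mathbb{E}(C,C)$ via \cref{prop:extricat long exact sequences} and heredity); and the same construction is indeed the right route to the Bongartz completion of part (3).

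The genuine gap is in (iv)$\Rightarrow$(ii), the only arrow re-entering your cycle. You base it on the claim that $\operatorname{thick}R$ equals $\operatorname{Cone}(\operatorname{add}R,\operatorname{add}R):=\{Y\ |\ \exists\, R_1\infl R_0\defl Y,\ R_i\in\operatorname{add}R\}$, i.e.\ that this subcategory is thick. Your horseshoe argument does give closure under extensions (rigidity plus heredity kill the obstruction terms), but thickness also demands closure under cocones and direct summands, and this fails, because inflations in an extriangulated category need not behave like monomorphisms. Concretely, take $\mathscr{C}=K^{[-1,0]}(\operatorname{proj}\field)$, i.e.\ example ($\delta$) of \cref{ex:0-Auslander} with $\Lambda=\field$ a field, and $R=\field[1]$. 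Then $R$ is rigid, and $\operatorname{Cone}(\operatorname{add}R,\operatorname{add}R)=\operatorname{add}(\field[1])$: in any conflation $\field^a[1]\infl\field^b[1]\defl Z$ the underlying linear map must be injective (otherwise its cone has cohomology in degree $-2$ and leaves $\mathscr{C}$), so $Z\cong\field^{\,b-a}[1]$. On the other hand, the direct sum of the contractible triangles $\field[1]\xrightarrow{1}\field[1]\to 0\to\field[2]$ and $\field\to 0\to\field[1]\xrightarrow{-1}\field[1]$ is a triangle with all three terms in $K^{[-1,0]}$, hence a conflation $\field\oplus\field[1]\infl\field[1]\overset{0}{\defl}\field[1]$ of $\mathscr{C}$; its two right-hand terms lie in $\operatorname{add}R$, so two-out-of-three and summand-closure force $\field\in\operatorname{thick}R$, whence $\operatorname{thick}R=\mathscr{C}$. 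Thus $R=\Lambda[1]$ is silting, yet the stalk complex $\Lambda$ admits no $\operatorname{add}R$-presentation: your description of $\operatorname{thick}R$ breaks precisely in the motivating example of two-term silting theory. (The implication (iv)$\Rightarrow$(ii) itself is true --- $\field[1]$ is $\mathbb{E}$-tilting here --- it is your intermediate lemma that is false; it already fails in exact categories, where $\operatorname{Cone}(\operatorname{add}T,\operatorname{add}T)$ for a tilting module $T$ over a hereditary algebra lies in the torsion class $\operatorname{Gen}T$, which is in general a proper, non-thick subcategory.) Consequently the equivalence of (i)--(iv) is not established: ``silting implies maximal rigid'' is exactly the hard direction and needs a different idea. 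Note also that a counting repair via indices, as in your part (2), is unavailable for part (1), which assumes no projective generator, and that even in part (2) the linear independence of the indices of the summands of a rigid object is itself a nontrivial theorem (a Dehy--Keller-type statement) that your sketch assumes rather than proves.
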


\begin{theorem}
\label{thm:mutation}
 Let $R$ be a basic silting object in $\mathscr{C}$ and let $X$ be an indecomposable summand of $R$ which is not projective-injective.
 Write $R=\overline{R}\oplus X$.
 Then there is a unique up to isomorphism indecomposable $Y\in\mathscr{C}$ such that $\overline{R}\oplus Y$ is silting.
 Moreover there is an exchange conflation $X\infl E\defl Y$ or $Y\infl E'\defl X$ with $E,E'\in\operatorname{add}\overline{R}$, but not both.
\end{theorem}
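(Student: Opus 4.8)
The plan is to adapt the theory of $2$-term silting (equivalently, support $\tau$-tilting) mutation to the $0$-Auslander extriangulated setting, using the equivalence of conditions (i)--(v) of \cref{def:tilting} and the Bongartz completion from the previous proposition. First I would record that, since $\overline{R}\oplus X$ is silting and the number of indecomposable summands is invariant among silting objects (via condition~(v) of \cref{def:tilting} and Bongartz completion), the object $\overline{R}$ is \emph{almost complete}: it is rigid with one fewer indecomposable summand than a silting object, but is not itself silting. Because $\mathscr{C}$ is Krull--Schmidt and Hom-finite, $\operatorname{add}\overline{R}$ is functorially finite, so $X$ admits a minimal right $\operatorname{add}\overline{R}$-approximation $b\colon E'\to X$ and a minimal left $\operatorname{add}\overline{R}$-approximation $a\colon X\to E$. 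The core of the argument is to show that exactly one of the following holds: either $b$ is a deflation, whose cocone gives a conflation $Y\infl E'\defl X$, or $a$ is an inflation, whose cone gives a conflation $X\infl E\defl Y$. Here the crucial input is the $0$-Auslander hypothesis $\operatorname{gl.dim}\mathscr{C}\le 1$, which, combined with the rigidity $\mathbb{E}(R,R)=0$ and the long exact sequences of \cref{prop:extricat long exact sequences}, forces the relevant approximation to fit into a conflation.

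Next I would verify that the object $Y$ in the exchange conflation is the desired second complement. Applying the six-term exact sequences of \cref{prop:extricat long exact sequences} to the conflation and using $\mathbb{E}(\overline{R},X)=0=\mathbb{E}(X,\overline{R})$, I would check that $\mathbb{E}(\overline{R}\oplus Y,\overline{R}\oplus Y)=0$, so that $\overline{R}\oplus Y$ is rigid. Since thick subcategories are closed under the two-out-of-three property for conflations, the exchange conflation gives $X\in\operatorname{thick}(\overline{R}\oplus Y)$, whence $\operatorname{thick}(\overline{R}\oplus Y)\supseteq\operatorname{thick}R=\mathscr{C}$; thus $\overline{R}\oplus Y$ is silting by condition~(iv) of \cref{def:tilting}. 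That $Y$ is indecomposable then follows again from the invariance of the number of indecomposable summands: a decomposable $Y$ would produce a silting object with too many summands.

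For uniqueness, I would show that every indecomposable complement $Y'\not\cong X$ of $\overline{R}$ must itself fit into an exchange conflation of one of the two types above; minimality of the approximation then pins down $Y'$ up to isomorphism, so that $X$ and $Y$ are the only two indecomposable complements of $\overline{R}$. The assertion that \emph{not both} types of conflation can occur is the heart of the matter: morally, in an ambient $2$-Calabi--Yau triangulated category (as in example ($\gamma$) of \cref{ex:0-Auslander}) both exchange triangles $X\to E\to Y$ and $Y\to E'\to X$ exist, but the $0$-Auslander extriangulated structure retains exactly one of them as a conflation. I would make this precise by assuming both conflations exist, splicing them via the octahedral axiom (ET4) (or \cref{prop:htpPO}), and deriving from $\operatorname{gl.dim}\mathscr{C}\le 1$ together with $\mathbb{E}(R,R)=0$ that $X$ would then be forced to be projective-injective, contradicting the standing hypothesis.

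I expect this final dichotomy --- proving that precisely one of the two minimal approximations, rather than zero, one, or both, is a conflation --- to be the main obstacle. It is exactly the place where both halves of \cref{def:0-Auslander} are genuinely needed: the hypothesis $\operatorname{gl.dim}\mathscr{C}\le 1$ guarantees that at most one exchange conflation survives and supplies the middle term in $\operatorname{add}\overline{R}$, while the dual hypothesis $\operatorname{dom.dim}\mathscr{C}\ge 1$, together with the assumption that $X$ is not projective-injective, guarantees that at least one of the two occurs.
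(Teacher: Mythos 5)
The survey itself contains no proof of \cref{thm:mutation}: it defers entirely to \cite[Section 4]{GorskyNakaokaPalu-Mutation}. Your outline follows the same approximation-theoretic architecture as that reference (minimal $\operatorname{add}\overline{R}$-approximations, exchange conflations, rigidity and thick-subcategory checks, uniqueness via minimality of approximations), so the strategy is the expected one. The problem is that at the point where the theorem has its real content you assert rather than argue. The existence half of the dichotomy --- that for $X$ not projective-injective \emph{at least one} of the two minimal approximations is part of a conflation --- is the heart of the theorem, and your only treatment of it is the closing claim that condition (b) of \cref{def:0-Auslander} ``guarantees'' it. No mechanism is offered for converting the failure of the left approximation $a\colon X\to E$ to be an inflation into the right approximation $E'\to X$ being a deflation; this is precisely where condition (b) must interact with the silting property of $R$ (e.g.\ through the conflations $P\infl R_0\defl R_1$ of \cref{def:tilting}(iii)), and it is the main work in the cited proof. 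Naming the hypothesis responsible for a step is not a proof of that step.

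Two further gaps. For ``not both'', your plan (splice via (ET4) and conclude that $X$ is projective-injective) aims at a harder target than necessary, and it is not clear the splice closes the argument; the efficient route is different: condition (a) of \cref{def:0-Auslander} makes $\mathbb{E}(W,-)$ right exact on conflations (resolve $W$ by projectives, use \cref{prop:extricat long exact sequences}, and lift maps from the projective through the deflation), so if both $X\infl E\defl Y$ and $Y\infl E'\defl X$ were conflations with $E,E'\in\operatorname{add}\overline{R}$, then surjectivity of $\mathbb{E}(Y,E')\to\mathbb{E}(Y,X)$ together with $\mathbb{E}(Y,\overline{R})=0$ would force $\mathbb{E}(Y,X)=0$, so $X\infl E\defl Y$ would split and $X\in\operatorname{add}\overline{R}$, contradicting that $R$ is basic. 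Finally, you twice invoke ``invariance of the number of indecomposable summands'' (for indecomposability of $Y$ and for uniqueness), but in this paper that is condition (v) of \cref{def:tilting}, which the surrounding Proposition only makes available when $\mathscr{C}$ has a projective generator --- not a hypothesis of \cref{thm:mutation}; indecomposability and uniqueness need direct arguments. Relatedly, your rigidity check for $\overline{R}\oplus Y$ cannot follow from the six-term sequences plus $\mathbb{E}(\overline{R},X)=0=\mathbb{E}(X,\overline{R})$ alone: killing $\mathbb{E}(Y,\overline{R})$ uses the approximation property of $a$ (every $f\colon X\to\overline{R}$ factors through $a$, and $f_\ast\delta=0$ by exactness), and killing $\mathbb{E}(\overline{R},Y)$ uses the right-exactness above, since the sequences of \cref{prop:extricat long exact sequences} do not terminate in a surjection.
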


\begin{remark}
 This theorem is proven with less assumptions on $\mathscr{C}$ and allows $R$ to be a full additive subcategory.
 In the independant article~\cite{AdachiTsukamoto-silting}, Takahide Adachi and Mayu Tsukamoto study mutations of silting objects in more general extriangulated categories.
 \cref{thm:mutation} can be though of as a two-term version of their result.
 The next subsection shows that our setting is presumably the right level of generality for studying mutations in representation theory.
\end{remark}

\begin{example}
\label{ex:max rigid in 0-Auslander}
 \cref{thm:mutation} applies to the examples listed in \cref{ex:0-Auslander}
\begin{enumerate}
 \item[($\alpha$)] For $\mathscr{C}=\operatorname{rep}_\mathbb{K}\overrightarrow{A_n}$, the silting objects are the tilting representations.
 An almost complete tilting representation has one or two complements to a tilting representation.
 It has two complements precisely when it is sincere, which holds if and only if it contains the projective-injective.
 Those classical results are consequences of \cref{thm:mutation}.
 \item[($\beta$)] For $\mathscr{C}=\mathscr{E}$ as in (ii) of \cref{ex:0-Auslander}, the silting objects are the maximal almost-rigid modules of~\cite{BarnardGunawanMeehanSchiffler}.
 This suggests that using 0-Auslander exact categories might help proving mutation of maximal almost-rigid modules over other classes of algebras.
 \item[($\gamma$)] For $\mathscr{C}$ a cluster category (or a 2-Calabi--Yau triangulated category) with the extriangulated structure $\mathbb{E}^T$ given by some cluster tilting object $T$, the silting objects are the cluster tilting objects and \cref{thm:mutation} recovers classical results by~\cite{BuanMarshReinekeReitenTodorov,IyamaYoshino}. For non 2-Calabi--Yau triangulated (or extriangulated) categories, a similar approach recovers mutation of relative tilting objects.
 \item[($\delta$)] For $\mathscr{C}=K^{[-1,0]}(\operatorname{proj}\Lambda)$ the silting objects are the two-term silting objects of $K^b(\operatorname{proj}\Lambda)$ whose mutations were studied by~\cite{AiharaIyama,Kimura}.
 The importance of two-term silting complexes stems from the bijection, induced by $H^0$, with support $\tau$-tilting modules~\cite{AdachiIyamaReiten}.
 \item[($\varepsilon$)] For $\mathscr{C}=\mathscr{W}$, the silting objects are the maximal collection of pairwise non-crossing accordions or, equivalently, the non-kissing facets.
 Their mutation thus categorifies the flip of non-kissing facets~\cite{McConville,PaluPilaudPlamondon-nonkissing} and the quotient of $\mathscr{W}$ by its projective-injectives categorifies the reduced non-kissing complex.
 \item[($\zeta$)] Let $\mathscr{C}$ be the extended coheart of a co-$t$-structure $(\mathcal{A},\mathcal{B})$.
 Combining~\cite[Theorem~5.5]{AdachiTsukamoto} and~\cite[Theorem 2.1]{PauksztelloZvonareva} we obtain a bijection between the silting objects of $\mathscr{C}$ and those co-$t$-structures $(\mathcal{A}',\mathcal{B}')$ which are \defn{intermediate} with respect to $(\mathcal{A},\mathcal{B})$, i.e. such that $\mathcal{A}\subseteq\mathcal{A}'\subseteq\Sigma\mathcal{A}$.
 \cref{thm:mutation} thus extends from~\cite{BrustleYang} the applicability of mutation for intermediate co-$t$-structures.
\end{enumerate}
\end{example}

%%%%%%%%%%%%%%%%%%%%%%%%%%%%%%%%%%%%%%%%%%%%%%%%%%%%%%%%%%
\section*{Conclusion}
%%%%%%%%%%%%%%%%%%%%%%%%%%%%%%%%%%%%%%%%%%%%%%%%%%%%%%%%%%

Since most, if not all, extriangulated categories arising in representation theory are topological (or even algebraic), they are extension-closed, full subcategories of triangulated categories.
One might thus argue that the axiomatics of extriangulated categories is dispensable.
However extriangulated structures are very versatile as illustrated by the examples encountered in representation theory.
The following are all examples of extriangulated categories naturally arising:
\begin{enumerate}[(a)]
 \item exact categories;
 \item triangulated categories;
 \item extension-closed subcategories of triangulated categories;
 \item quotients of exact categories by some projective-injective objects;
 \item triangulated categories endowed with some relative extriangulated structure;
 \item quotients of exact categories by morphisms with injective domains and projective codomains.
\end{enumerate}

Note that, for points (d), (e), (f), no ambiant triangulated category in which the extriangulated category is ext-closed is given a priori.
Even when it is possible to construct one, this ambiant triangulated category might not enjoy some of the desirable properties (being Hom-finite, Krull--Schmidt, weakly idempotent complete, etc...) satisfied by the categories we started with.

As a final comment, \cref{fig:conclusion} illustrates the place of extriangulated categories among other relevant categorical structures appearing in representation theory and beyond.

\begin{figure}[h]
\begin{center}
 \capstart
  \includegraphics[scale=.6]{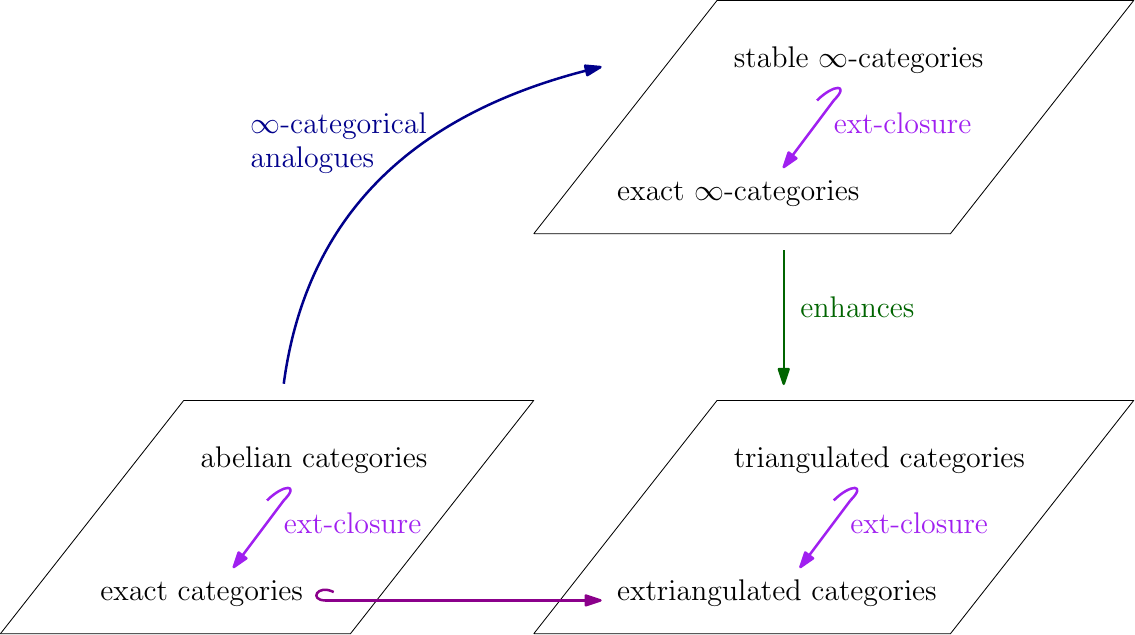}
  \caption{}
  \label{fig:conclusion}
\end{center}
\end{figure}

\bibliographystyle{alpha}
\bibliography{Alesund2022}

\end{document}